\pgfplotsset{compat = newest}
\theoremstyle{plain}
\newtheorem{theorem}{Theorem}[section]
\newtheorem{lemma}[theorem]{Lemma}
\newtheorem{proposition}[theorem]{Proposition}
\theoremstyle{remark}
\newtheorem{definition}[theorem]{Definition}
\numberwithin{equation}{section}
\newtheorem{remark}[theorem]{Remark}
\newtheorem{question}[theorem]{Question}
\numberwithin{equation}{section}
\newcommand{\1}{{\text{\Large $\mathfrak 1$}}}
\newcommand{\be}{\begin{equation}}
\newcommand{\ee}{\end{equation}}
\providecommand{\norm}[1]{\Vert#1\Vert}
\providecommand{\normO}[1]{\vert#1\vert}
\newcommand{\eone}{\textup{e}_1}
\let\@fnsymbol\@arabic
\def\bE{\mathbb{E}}
\def\bN{\mathbb{N}}
\def\bP{\mathbb{P}}
\def\bR{\mathbb{R}}
\def\bZ{\mathbb{Z}}
\def\cL{\mathcal{L}}
\def\cU{\mathcal{U}}
\def\cP{\mathcal{P}}
\def\c{\textup{c}} 
 \def\Z{\bZ} 
\def\R{\bR}
\def\N{\bN}
\def\P{\bP} 
\def\cO{\mathcal{O}}
\newcommand{\pr}[1]{\mathbb{P}\!\left(#1\right)}
\newcommand{\Ex}[1]{\mathbb{E}\!\left[#1\right]}
\newcommand{\estart}[2]{\mathbb{E}_{#2}\!\left[#1\right]}
\newcommand{\prcond}[3]{\mathbb{P}_{#3}\!\left(#1\;\middle\vert\;#2\right)}
\newcommand{\econd}[2]{\mathbb{E}\!\left[#1\;\middle\vert\;#2\right]}
\newcommand{\escond}[3]{\mathbb{E}_{#3}\!\left[#1\;\middle\vert\;#2\right]}
\newcommand{\til}{\widetilde}
\renewcommand{\epsilon}{\varepsilon}
\newcommand{\Er}{E}
\newcommand{\dif}{\textup{d}}
\DeclareMathOperator{\Var}{Var}
\def\E{\bE}
\def\P{\bP} 
\definecolor{partcolor1}{rgb}{0.0,0.5,0.0}
\definecolor{partcolor2}{rgb}{0.0,0.5,0.0}
\definecolor{darkgreen}{rgb}{0.0,0.5,0.0}
\definecolor{darkblue}{rgb}{0.5,0.1,0.5}
\definecolor{deepblue}{rgb}{0.25,0.41,0.88}
\definecolor{nicosred}{rgb}{0.65,0.1,0.1}
\definecolor{light-gray}{gray}{0.7}
\begin{document}

\begin{frontmatter}
\title{Biased random walk on dynamical percolation}
\runtitle{Biased random walk on dynamical percolation}

\begin{aug}
\author[A]{\fnms{Sebastian}~\snm{Andres}\ead[label=e1]{sebastian.andres@tu-braunschweig.de}}
\author[B]{\fnms{Nina}~\snm{Gantert}\ead[label=e2]{ gantert@ma.tum.de }},
\author[C]{\fnms{Dominik}~\snm{Schmid}\ead[label=e3]{d.schmid@uni-bonn.de}},
\and
\author[D]{\fnms{Perla}~\snm{Sousi}\ead[label=e4]{p.sousi@statslab.cam.ac.uk}}
\address[A]{Institut f\"ur Mathematische Stochastik, Technische Universit\"at Braunschweig,
\printead{e1}}

\address[B]{Department of Mathematics, Technical University of Munich,
\printead{e2}}

\address[C]{Institute for Applied Mathematics,  University of Bonn,
\printead{e3}}

\address[D]{Department of Pure Mathematics and Mathematical Statistics,
University of Cambridge,
\printead{e4}}

\end{aug}

\begin{abstract}
We study biased random walks on dynamical percolation on $\Z^d$. We establish a law of large numbers and an invariance principle for the random walk using regeneration times. Moreover, we verify that the Einstein relation holds, and we investigate the speed of the walk as a function of the bias. While for $d=1$ the speed is increasing, we show that in general this fails in dimension $d \geq 2$. As our main result, we establish two regimes of parameters, separated by an explicit critical curve, such that the speed is either eventually strictly increasing or eventually strictly decreasing. This is in sharp contrast to the biased random walk on a static supercritical percolation cluster, where the speed is known to be eventually zero.
\end{abstract}

\begin{keyword}[class=MSC 2020]
\kwd[Primary ]{60K35}
\kwd{60K37}
\end{keyword}

\begin{keyword}
\kwd{Dynamical percolation}
\kwd{biased random walk}
\kwd{regeneration times}
\end{keyword}

\end{frontmatter}


\section{Introduction} \label{sec:Introduction}

In this paper, we introduce and study biased random walks in dynamically evolving environments. 
The model of random walks on dynamical percolation was introduced in \cite{PSS:DynamicalPercolation} by Peres, Stauffer and Steif, and has the following description. 

Fix a locally finite graph $G=(V,E)$ and an initial  state $\eta \in \{0,1\}^{E(G)}$ of the edges.  For parameters $\mu > 0$ and~$p \in [0,1]$, we consider the dynamics $(\eta_t)_{t \geq 0}$ with $\eta_0=\eta$, where each edge $e$ in the graph is assigned an independent Poisson process of rate~$\mu$. If there is a point of the Poisson process at time $t$, we refresh the state of $e$ in $\eta_t$, i.e.\ we declare $e$ open (and write $\eta_t(e)=1$) with probability $p$ and closed (and write $\eta_t(e)=0$) with probability~$1-p$, independently of all other edges and previous states of $e$. We then say an edge $e$ is \textbf{open} at time $t$ if $\eta_t(e)=1$, and \textbf{closed} otherwise.

From now on, we focus on the case where the underlying graph is $\Z^d$ with $d\geq 1$. We define a continuous-time random walk $(X_t)_{t \geq 0}$ in the environment $(\eta_t)_{t \geq 0}$ with bias parameter $\lambda>0$ as follows: set $X_0=0$ and assign a rate $1$ Poisson clock to the particle. We also set for $\lambda>0$,
\begin{align}\label{eq:defz}
Z_\lambda:= e^\lambda + e^{-\lambda}  + 2d-2.	
\end{align}
Whenever the clock rings at time $t$ and the random walker is currently at a site $x$, we choose one of the neighbours $y$ of $x$ with probability
\begin{align*}
p(x,x \pm \textup{e}_i) &= \frac{1}{Z_\lambda} \qquad \text{ for } i \in \{2,\dots,d\} , \\
 p(x,x + \eone) &= \frac{\textup{e}^\lambda}{Z_\lambda}, \qquad p(x,x - \eone) = \frac{\textup{e}^{-\lambda}}{Z_\lambda}\, .
\end{align*} 
If $\eta_t(\{x,y\})=1$, the random walker moves from $x$ to $y$, and it stays at $x$, otherwise. 
We will call the process $(X_t,\eta_t)_{t\geq0}$ a $\lambda$-biased random walk on dynamical percolation with parameters~$\mu$ and~$p$.  Throughout this paper, we say that an edge $e$ is \textbf{examined} by the walker at time $t$ if the clock for the walker rings at time $t$ and the walker chooses the edge $e$ for a possible jump. 

Note that $(\eta_t)_{t \geq 0}$ and $(X_t,\eta_t)_{t \geq 0}$ are  Markov processes, while $(X_t)_{t \geq 0}$ is not. Moreover, $(\eta_t)_{t \geq 0}$ has the Bernoulli-$p$-product measure $\pi_p$ on $\{0,1\}^{E(\Z^d)}$ as its unique invariant distribution, and we assume in the following that $\eta=\eta_0 \sim\pi_p$. 

In this paper our focus is on the speed of the first coordinate of the walk as a function of the bias. The motivation to study this question comes from the two different regimes  one observes in the case of a biased random walk on a static percolation cluster. It was first shown in~\cite{BGP:SpeedPercolation} and~\cite{S:Anisotropic} that when $p>p_c$ and $X$ is a $\lambda$-biased random walk on the infinite percolation cluster, then there exist $\lambda_1<\lambda_2$ so that when $\lambda>\lambda_2$, the speed is $0$, while for $\lambda<\lambda_1$, the speed is strictly positive. A few years later it was proved by~\cite{FH:SpeedPercolation} that there is a sharp transition, i.e.\ there exists $\lambda_*$ so that for all $\lambda>\lambda_*$ the speed is equal to $0$, while for $\lambda<\lambda_*$ the speed is strictly positive. Motivated by these results, in this paper we study the speed in the dynamical setting and we establish that for all choices of the parameters, the speed is always strictly positive and it satisfies an Einstein relation as we show in Theorem~\ref{thm:main} below. Our second main result concerns the monotonicity of the speed as a function of the bias in dimensions $d\geq 2$, where we observe two different regimes.

Before stating our results we recall an invariance principle established in~\cite[Theorem~3.1]{PSS:DynamicalPercolation} in the unbiased case. Unless otherwise stated, our probability measure is taking averages not only over the walk but over the environment as well.

\begin{theorem}[{\cite[Theorem 3.1]{PSS:DynamicalPercolation}}]\label{thm:pssinvariance} For $d\geq 1$, $\mu>0$,  $p\in (0,1)$ and $\lambda=0$, there exists $\sigma\in (0,\infty)$ so that 
\begin{align*}
	\left( \frac{X_{kt}}{\sqrt{k}} \right)_{t \in [0,1]} \overset{(d)}{\rightarrow} (\sigma B_t)_{t \in [0,1]}
\end{align*}
	in $D[0,1]$ as $k\to\infty$, where $(B_t)_{t\geq 0}$ is a standard Brownian motion.
\end{theorem}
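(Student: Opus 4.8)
The plan is to use the \emph{environment seen from the particle} together with the Kipnis--Varadhan invariance principle for additive functionals of reversible Markov processes, exploiting that for $\lambda=0$ the walk is symmetric. Let $\theta_x$ denote the shift by $x\in\Z^d$ on configurations and set $\omega_t:=\theta_{X_t}\eta_t\in\{0,1\}^{E(\Z^d)}$, the environment recentred at the walker; since $\eta_0\sim\pi_p$ and $X_0=0$ we have $\omega_0\sim\pi_p$. Then $(\omega_t)_{t\ge0}$ is a Markov process with generator
\begin{align*}
\mathcal{L}f(\omega)=\mu\sum_{e}\bigl(p\,f(\omega^{e,1})+(1-p)f(\omega^{e,0})-f(\omega)\bigr)+\frac{1}{2d}\sum_{z\sim0}\omega(\{0,z\})\bigl(f(\theta_z\omega)-f(\omega)\bigr),
\end{align*}
where $\omega^{e,\alpha}$ is $\omega$ with the state of $e$ set to $\alpha$ and $z$ ranges over the $2d$ neighbours of the origin. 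First I would check that $\pi_p$ is reversible for $(\omega_t)$: the refresh part is the independent Glauber dynamics, self-adjoint on $L^2(\pi_p)$; for the walk part one uses that $\omega\mapsto\omega(\{0,z\})$ behaves correctly under $\theta_z$ (the gradient condition $c(\omega,z)=c(\theta_z\omega,-z)$ with $c(\omega,z)=\tfrac1{2d}\omega(\{0,z\})$) together with translation invariance of $\pi_p$, giving $\langle f,\mathcal{L}g\rangle_{\pi_p}=\langle\mathcal{L}f,g\rangle_{\pi_p}$. The crucial quantitative input is that $\mathcal{L}$ has a spectral gap: the Glauber part alone has gap $\mu$ by the Efron--Stein inequality for the product measure $\pi_p$, and the walk part is a nonnegative operator, so $\langle f,(-\mathcal{L})f\rangle_{\pi_p}\ge\mu\,\Var_{\pi_p}(f)$; in particular $\pi_p$ is ergodic for $(\omega_t)$.

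Next I would set up the martingale decomposition. Fix a coordinate $i$ and put $\phi_i(\omega):=\frac1{2d}\sum_{z\sim0}z_i\,\omega(\{0,z\})$, which is bounded and, by the symmetry $z\leftrightarrow -z$, satisfies $\E_{\pi_p}[\phi_i]=0$. By Dynkin's formula $M^{(i)}_t:=X^{(i)}_t-\int_0^t\phi_i(\omega_s)\,\dif s$ is a martingale. Because of the spectral gap, $g_i:=(-\mathcal{L})^{-1}\phi_i\in L^2(\pi_p)$ is well defined (as the $L^2$-limit of $(\epsilon-\mathcal{L})^{-1}\phi_i$ as $\epsilon\downarrow0$), and Dynkin's formula applied to $g_i$ gives $\int_0^t\phi_i(\omega_s)\,\dif s=g_i(\omega_0)-g_i(\omega_t)+\widetilde M^{(i)}_t$ with $\widetilde M^{(i)}$ a martingale. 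Hence
\begin{align*}
X^{(i)}_t=N^{(i)}_t+g_i(\omega_0)-g_i(\omega_t),\qquad N^{(i)}_t:=M^{(i)}_t+\widetilde M^{(i)}_t,
\end{align*}
and since $\omega_0\sim\pi_p$ is stationary and ergodic, $N^{(i)}$ is a martingale with stationary ergodic increments. Its predictable quadratic variation is the additive functional with bounded rate $\psi_i(\omega)=\sum_{z\sim0}c(\omega,z)\bigl(z_i+g_i(\theta_z\omega)-g_i(\omega)\bigr)^2+\mu\sum_e\E_{\alpha\sim\mathrm{Ber}(p)}\bigl[(g_i(\omega^{e,\alpha})-g_i(\omega))^2\bigr]$, so by the ergodic theorem $k^{-1}\langle N^{(i)}\rangle_{kt}\to\sigma^2 t$ a.s., where $\sigma^2=\E_{\pi_p}[\psi_i]$ is independent of $i$ and the distinct-coordinate cross variations vanish in the limit, both by the coordinate symmetries of the $\lambda=0$ dynamics.

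Finally I would invoke the functional martingale central limit theorem: $N^{(i)}$ has stationary ergodic increments, its rescaled quadratic variation converges, and the Lindeberg condition holds since the rate $\psi_i$ is $\pi_p$-integrable; therefore $(k^{-1/2}N^{(i)}_{kt})_{t\in[0,1]}\to(\sigma B^{(i)}_t)_{t\in[0,1]}$ in $D[0,1]$, jointly in $i$ to a standard Brownian motion. The corrector contributes $k^{-1/2}\sup_{t\le1}\abs{g_i(\omega_0)-g_i(\omega_{kt})}$, which tends to $0$ in probability by approximating $g_i$ in $L^2(\pi_p)$ by bounded functions and a maximal inequality (the routine part of the Kipnis--Varadhan machinery). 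Adding the two pieces yields $(k^{-1/2}X_{kt})_{t\in[0,1]}\to(\sigma B_t)_{t\in[0,1]}$ in $D[0,1]$. For non-degeneracy: if $\sigma^2=\E_{\pi_p}[\psi_i]=0$ then, since $\psi_i\ge0$, the refresh term forces $g_i$ to be $\pi_p$-a.s.\ constant, and then the walk term forces $z_i=0$ whenever $\{0,z\}$ is open, which is impossible because $\{0,e_i\}$ is open with probability $p\in(0,1)$; hence $\sigma\in(0,\infty)$. The main obstacle is the first step: verifying reversibility of $\pi_p$ for the recentred environment process and that $\mathcal{L}$ inherits the spectral gap of the refresh dynamics; granting this, the rest is the standard reversible-case argument, with the only technical subtlety being the $D[0,1]$-level control of the possibly unbounded corrector.
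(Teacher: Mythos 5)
Your argument is correct, but it follows a genuinely different route from the one the paper (following \cite{PSS:DynamicalPercolation}) relies on. The quoted theorem is proved in \cite{PSS:DynamicalPercolation}, and the present paper re-derives the (biased) invariance principle in Proposition~\ref{pro:Invariance} by the same method: regeneration times $(\tau_i)$ at which the infected set empties, i.i.d.\ excursions, a CLT along $(\tau_i)$, and an interpolation step, yielding $\sigma^2=\Var_\lambda(X^1_{\tau_1})/\E[\tau_1]$. You instead pass to the environment seen from the particle, check reversibility of $\pi_p$ for the recentred process at $\lambda=0$ (the gradient condition $c(\omega,z)=c(\theta_z\omega,-z)$ plus translation invariance), use tensorization/Efron--Stein to get the spectral gap $\mu$ from the refresh part, and then run the Kipnis--Varadhan martingale-plus-corrector decomposition; with the gap, $g_i=(-\mathcal{L})^{-1}\phi_i\in L^2(\pi_p)$ exists outright, so the usually delicate $H_{-1}$ issues disappear, and non-degeneracy of $\sigma$ falls out cleanly because vanishing of the refresh term in the quadratic-variation rate forces $g_i$ to be insensitive to every coordinate (hence a.s.\ constant by Kolmogorov's zero--one law), after which the walk term gives a contradiction since $\{0,\eone\}$ is open with probability $p>0$. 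The trade-off: your approach is confined to $\lambda=0$, where reversibility holds --- it does not extend to the biased walks that are the paper's main object, whereas the regeneration construction is insensitive to $\lambda$ and $p$ and, crucially, produces the representation $\sigma^2=\E_0[(X^1_{\tau_1})^2]/\E[\tau_1]$ that the paper needs later for the Einstein relation (Proposition~\ref{einstein-x}) and for the speed formulas; conversely, your route gives a Dirichlet-form expression for $\sigma^2$ and a very transparent positivity argument, and with the spectral gap it is arguably the most self-contained proof of the unbiased statement. The steps you leave as ``routine'' (maximal control of the corrector via the reversible Kipnis--Varadhan machinery, the martingale FCLT for stationary ergodic increments, and the symmetry argument killing cross-variations between coordinates) are indeed standard and pose no gap.
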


We now present our first result on the speed of the biased random walk $(X_t)_{t \geq 0}$ for fixed environment parameters $\mu >0$ and $p \in (0,1)$.

\begin{theorem}\label{thm:main}
	Let $d\geq 1$ and let $(X_t,\eta_t)_{t\geq 0}$ be a $\lambda$-biased random walk on dynamical percolation on $\Z^d$ with parameters $\mu>0$ and $p\in (0,1)$. Then for all $\lambda>0$, there exists $v(\lambda)=v_{\mu,p}(\lambda)$ such that almost surely
\begin{equation}\label{eq:SpeedBRW}
\lim_{t \rightarrow \infty} \frac{X_t}{t} = (v(\lambda),0,\dots,0) \, .
\end{equation} 
Moreover, the function $\lambda\mapsto v(\lambda)$ is strictly positive for all $\lambda>0$, continuously differentiable and satisfies
\begin{align}\label{einstein-a}
	\lim_{\lambda \rightarrow 0} v^{\prime}(\lambda)=\sigma^2,
\end{align}
where $\sigma$ is as in Theorem~\ref{thm:pssinvariance}.
\end{theorem}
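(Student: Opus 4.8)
The plan is to construct regeneration times for the Markov process $(X_t,\eta_t)_{t\ge 0}$ and to read off every assertion from the resulting renewal structure, following the template for ballistic random walks in random environment while exploiting one feature peculiar to the dynamical model. I would call a time $\tau$ a \emph{regeneration time} if the first coordinate $X^{(1)}$ attains a strict new running maximum $\ell$ at $\tau$, never visits level $\ell-1$ or below afterwards, and in addition a local decoupling event occurs at $\tau$: every edge in a box of fixed radius around $X_\tau$ has been refreshed during a short time window ending at $\tau$ (and is, say, open, so that the walk is forced through the box). Since refreshes on disjoint space-time regions are independent, at such a $\tau$ the environment and Poisson clocks governing the future are independent of everything the walk examined before $\tau$, so the increments $\bigl(X_{\tau_{k+1}}-X_{\tau_k},\,\tau_{k+1}-\tau_k\bigr)$, $k\ge 1$, are i.i.d.\ and the first block has the same law up to a shift. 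Granting that $\tau_1<\infty$ almost surely and the moment bounds below, the strong law for i.i.d.\ sums gives
\[
\lim_{t\to\infty}\frac{X^{(1)}_t}{t}=v(\lambda):=\frac{\E^\lambda\bigl[X^{(1)}_{\tau_2}-X^{(1)}_{\tau_1}\bigr]}{\E^\lambda\bigl[\tau_2-\tau_1\bigr]}>0,
\]
while $X^{(i)}_t/t\to 0$ for $i\ge 2$ because the dynamics, and the regeneration structure (which depends only on $X^{(1)}$), is invariant under the reflections $x^{(i)}\mapsto -x^{(i)}$, forcing $\E^\lambda\bigl[X^{(i)}_{\tau_2}-X^{(i)}_{\tau_1}\bigr]=0$.

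The decisive input is \emph{ballisticity together with the absence of trapping}, which is precisely where the dynamical model parts ways with the static one. Because every edge refreshes at rate $\mu$, the walk is never stuck: even surrounded by closed edges it moves within mean time of order $1/\mu$, so the rightward bias produces a net positive drift and, by a standard Lyapunov/comparison argument, $X^{(1)}_t\to+\infty$ almost surely; a Sznitman--Zerner type $0$--$1$ law then yields a uniform positive probability of never backtracking, whence $\tau_1<\infty$ a.s. The same no-trapping feature is what makes the key estimate $\E^\lambda[\tau_2-\tau_1]<\infty$ hold; in fact I would prove that $\tau_2-\tau_1$ has exponential tails, and so does $X^{(1)}_{\tau_2}-X^{(1)}_{\tau_1}$ (it is dominated by the number of clock rings in the block), which simultaneously supplies the second-moment control behind the invariance principle for the biased walk established in this paper. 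By contrast, on a static supercritical cluster $\E^\lambda[\tau_2-\tau_1]=\infty$ for large $\lambda$, which is why the speed there is eventually $0$; here the refresh rate keeps the forward drift bounded below, so $v(\lambda)>0$ for every $\lambda>0$.

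For regularity I would couple all biases on one probability space, keeping the Poisson clocks and refresh variables fixed and varying only the jump directions. Then the Radon--Nikodym derivative of $\P^\lambda$ with respect to $\P^{\lambda_0}$ on the $\sigma$-field of the path up to a stopping time equals $\exp\bigl((\lambda-\lambda_0)A-N(c(\lambda)-c(\lambda_0))\bigr)$, where $A$ is the signed number of attempts along the $\eone$-axis, $N$ the total number of clock rings, and $c(\lambda)=\log(Z_\lambda/(2d))$ satisfies $c(0)=c'(0)=0$. Writing the two expectations defining $v$ as $\P^{\lambda_0}$-expectations weighted by this density over one regeneration block, and differentiating under the expectation---legitimate since the exponential tails of $\tau_2-\tau_1$ dominate the integrands locally uniformly in $\lambda$---shows that $v$ is $C^1$ on $(0,\infty)$, with an explicit expression for $v'(\lambda)$ in terms of covariances of the regeneration increments. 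To evaluate $\lim_{\lambda\downarrow 0}v'(\lambda)$ I would rescale diffusively: a regeneration block lasts of order $\lambda^{-2}$, so, using instead the $\P^0$-change of measure $\E^\lambda[X^{(1)}_t]=\E^0\bigl[X^{(1)}_t\,e^{\lambda A_t-N_tc(\lambda)}\bigr]$ at horizons $t\asymp\lambda^{-2}$---where $\lambda A_t$ and $N_tc(\lambda)$ remain of order $1$---a joint invariance principle for $\bigl(X^{(1)}_t,A_t\bigr)$ under $\P^0$ extending Theorem~\ref{thm:pssinvariance}, together with uniform exponential-integrability bounds, shows that $v'(\lambda)$ and $v(\lambda)/\lambda$ both tend, as $\lambda\downarrow0$, to $\gamma:=\lim_{t\to\infty}\tfrac1t\,\E^0[X^{(1)}_tA_t]$. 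Finally $\gamma=\sigma^2$, because $A_t-X^{(1)}_t$ is asymptotically uncorrelated with $X^{(1)}_t$ on the diffusive scale, so $\gamma=\lim_{t\to\infty}\tfrac1t\E^0[(X^{(1)}_t)^2]=\sigma^2$; this yields $\lim_{\lambda\to0}v'(\lambda)=\sigma^2$.

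I expect the main obstacle to be the construction of the regeneration times together with the exponential tail bounds---in particular $\E^\lambda[\tau_2-\tau_1]<\infty$: one must design the decoupling event so that regenerations occur with a density bounded below along the trajectory, while still genuinely sealing the future off from the past even though the walk can in principle re-examine edges near an old record. I would handle this not through any static niceness of the environment but by erecting a space-time ``seal'' of freshly refreshed edges around each regeneration point, using the refresh dynamics. A secondary and classically delicate difficulty is the identification $\gamma=\sigma^2$ and, more precisely, the control of $v'(\lambda)$ as $\lambda\downarrow0$: this requires the second-moment, covariance, and exponential-integrability estimates for the change-of-measure representation to be uniform in $\lambda$ near $0$, which is the heaviest computation in this part of the argument.
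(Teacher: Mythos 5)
Your overall architecture (a regeneration structure, a law of large numbers along regeneration blocks, a Radon--Nikodym reweighting in $\lambda$, differentiation under the expectation) is the same as the paper's, but the three steps you treat as routine are exactly where the substance lies, and as written they contain genuine gaps. The first is the regeneration construction itself. Your Sznitman--Zerner-style times (fresh record of $X^{(1)}$ at level $\ell$, no future backtracking, plus a refreshed box of fixed radius around $X_\tau$) do not seal the future off from the past: before $\tau$ the walk, while at level $\ell-1$ at a transversal position far outside your box, may have examined an edge joining levels $\ell-1$ and $\ell$, and after $\tau$ it can re-examine that very edge (an attempted $-\eone$ jump from level $\ell$) before it has refreshed; the post-$\tau$ law then depends on the pre-$\tau$ history, so the blocks are not i.i.d. Moreover the no-backtracking condition makes $\tau_1$ depend on the whole future, so it is not a stopping time, which undermines your later use of the Radon--Nikodym density on the $\sigma$-field up to $\tau_1$ (the optional stopping/uniform integrability step), and it makes the law of the regeneration times depend on $\lambda$. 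The paper's infected-set construction is designed to avoid precisely these problems: $\tau_i$ is the time when every edge examined so far has been refreshed, so the environment at $\tau_i$ is exactly $\pi_p$ independently of the past, $\tau_i$ is a genuine stopping time for the enlarged filtration, and its law depends only on $\mu$ (not on $\lambda$ or $p$), a fact used repeatedly in the differentiation and asymptotic estimates.

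The second and third gaps are the two places where you assert what the paper has to prove with its key idea. Strict positivity of the speed does not follow from a ``standard Lyapunov/comparison argument'': conditionally on the history, the edge the walker has just crossed (behind it) is typically open while the edge ahead may be known to be closed, so the instantaneous conditional drift can be negative; there is no pointwise drift bound, and the paper proves $\E_{\lambda}[X^1_{\tau_1}]>0$ via a time-reversal symmetry combined with a change of measure (Proposition~\ref{pro:Einstein}). Similarly, your final step for the Einstein relation, that $A_t-X^{(1)}_t$ is asymptotically uncorrelated with $X^{(1)}_t$, is exactly the identity $\E_{0}\left[(R(t)-L(t))\left(R_{\rm supp}(t)-L_{\rm supp}(t)\right)\right]=0$ on which the whole computation hinges; the paper establishes it by a time-reversal involution under which $R(t)-L(t)$ is antisymmetric while the suppressed-jump counts are symmetric. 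You offer no argument for either point, and they are essentially the same missing idea; without it, both the strict positivity of $v(\lambda)$ and the identification $\lim_{\lambda\to 0}v'(\lambda)=\sigma^2$ remain unproved (and, since your finiteness of $\tau_1$ is derived from the claimed ballisticity, the regeneration construction itself is left hanging as well).
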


The last statement in the theorem above is known as \textbf{Einstein relation}. Moreover, as we will see in Proposition~\ref{pro:Invariance}, an invariance principle also holds in the biased case and the proof follows along the same lines as the proof of Theorem~3.1 in~\cite{PSS:DynamicalPercolation}.

When $d=1$, using the obvious coupling between two walks with different bias parameters, it is immediate to see that the speed is always monotone increasing in the bias and in fact in  Section~\ref{sec:SpeedRWOneDim} we also establish that in $d=1$ the speed is strictly increasing as a function of the bias. It is thus natural to ask what happens for $d\geq 2$. While the speed turns out to be monotone increasing in~$\lambda>0$ for certain regimes of $\mu>0$ and $p\in (0,1)$ in dimensions $d\geq 2$ as we show in Section~\ref{sec:SpeedRWTwoDim}, our main result is an explicit criterion deciding whether the speed is eventually strictly increasing or decreasing.

\begin{theorem}[Monotonicity of the speed for $d \geq2$]\label{thm:mainTwoDim}Consider the biased random walk on dynamical percolation on $\Z^d$ for $d\geq 2$. For all $p \in (0,1)$ and $\mu>0$, there exists some $\lambda_0=\lambda_0(\mu,d)$ such that the following hold.
\begin{itemize}
\item[\rm(1)] The speed $v(\lambda)$ is strictly increasing for all $\lambda \geq \lambda_0$ provided that $\mu^2 > p(1-p)$.
\item[\rm(2)] The speed $v(\lambda)$ is strictly decreasing for all $\lambda \geq \lambda_0$ provided that $\mu^2 < p(1-p)$.
\end{itemize}
\end{theorem}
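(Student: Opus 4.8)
\emph{The plan} is to prove both parts at once by establishing a sharp asymptotic expansion of the speed as $\lambda\to\infty$, namely
\begin{equation*}
v(\lambda)=v_\infty+c\,e^{-\lambda}+O(e^{-2\lambda}),\qquad v_\infty=\frac{p\mu}{\mu+1-p},\qquad c=(2d-2)\,\frac{p\bigl(p(1-p)-\mu^2\bigr)}{(\mu+1-p)^2},
\end{equation*}
together with the matching expansion $v'(\lambda)=-c\,e^{-\lambda}+O(e^{-2\lambda})$. Since $v$ is $C^1$ by Theorem~\ref{thm:main}, this gives the theorem immediately: for $\lambda$ beyond some $\lambda_0$ the sign of $v'(\lambda)$ equals the sign of $\mu^2-p(1-p)$, so $v$ is strictly increasing there when $\mu^2>p(1-p)$ and strictly decreasing when $\mu^2<p(1-p)$. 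To compute $v(\lambda)$ I would decompose the trajectory into \emph{site visits} and use a ratio-ergodic (renewal--reward) argument: writing $T$ and $\Delta\in\{0,\pm1\}$ for the duration and $\eone$-displacement of a site visit started from a fresh local environment, one has $v(\lambda)=\Ex{\Delta}/\Ex{T}$, valid up to the $O(e^{-2\lambda})$ corrections discussed below. (Equivalently $v(\lambda)=r_+\bar\pi_{\mathrm{front}}-r_-\bar\pi_{\mathrm{back}}$ with $r_\pm=e^{\pm\lambda}/Z_\lambda$ and $\bar\pi$ the invariant law of the environment seen from the particle, and $r_-\bar\pi_{\mathrm{back}}=O(e^{-2\lambda})$, but the site-visit picture is cleaner for the computation.)

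\emph{Computing the expansion.} When $\lambda$ is large, at each ring of its clock the walker attempts an $\eone$-step with probability $r_+=1-(2d-2)e^{-\lambda}+O(e^{-2\lambda})$, a lateral step with total probability $(2d-2)e^{-\lambda}+O(e^{-2\lambda})$, and a $(-\eone)$-step with probability $O(e^{-2\lambda})$. When the walker enters a site $x$ by an $\eone$- or lateral step — the only cases relevant at this order — the edge $\{x,x+\eone\}$ has, with probability $1-O(e^{-2\lambda})$, not been examined before, so its state at the entrance time is $\pi_p$-distributed and independent of that time, because the entrance time is measurable with respect to the walker's clock and the previously examined edges, which are independent of $\{x,x+\eone\}$. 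During the visit $\{x,x+\eone\}$ runs as a two-state chain with opening rate $\mu p$ and closing rate $\mu(1-p)$; the visit ends by an $\eone$-step at rate $r_+$ whenever this edge is open, and by a lateral step at rate $(2d-2)e^{-\lambda}$ times the probability $p+O(e^{-\lambda})$ that the chosen lateral edge is open (examining an edge does not alter its state, so this probability differs from $p$ only through the size bias of conditioning on not having already left laterally). Solving the resulting small explicit linear system for $\Ex{T}$, for $\Ex{\Delta}=\pr{\text{visit ends by an }\eone\text{-step}}+O(e^{-2\lambda})$, and for the expected time the front edge is open during a visit, each to order $e^{-\lambda}$, and taking the quotient, yields the displayed expansion; the leading term $v_\infty$ is the reciprocal of the $\lambda\to\infty$ limit of $\Ex{T}$. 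The $e^{-\lambda}$-coefficient weighs two opposing effects of lateral steps: one taken while the front edge is \emph{open} discards a good edge and forces a re-sampling (pure loss), whereas one taken while the front edge is \emph{closed} re-samples the front edge and with probability $p$ ends a long wait; since the walker sits a time of order $1/(\mu p)$ behind a closed front edge but only of order $1$ with an open one, the gain beats the loss exactly when $\mu$ is small, and the book-keeping collapses to the factor $p(1-p)-\mu^2$.

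\emph{The main obstacle} is to make the two freshness claims above quantitative and uniform, i.e.\ to replace the informal ``$O(e^{-2\lambda})$'' by honest bounds. The walker revisits a site only by leaving it laterally (probability $O(e^{-\lambda})$) and later backtracking to it, and backtracking requires a $\mp\textup{e}_i$- or $(-\eone)$-attempt, of rate $O(e^{-\lambda})$, to win out over the forward rate $\approx1$ or the front-opening rate $\mu p$; hence revisits occur at rate $O(e^{-2\lambda})$, and when one occurs a time of order at least $1/\mu$ has typically elapsed, so by mixing of the two-state edge chain the contaminated edge has re-equilibrated up to a small error. Together these give an $O(e^{-2\lambda})$ total-variation bound between the law of the local environment at each site entrance and $\pi_p$. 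Likewise, coupling the two-state chain and using that lateral attempts have rate $O(e^{-\lambda})$ bounds by $O(e^{-\lambda})$ the bias in a lateral edge's law from conditioning on the walker's recent trajectory; double lateral excursions and lateral-then-backward returns contribute only $O(e^{-2\lambda})$ and are absorbed into the error. Turning these into the stated remainder — and getting the matching control on $v'(\lambda)$, either by differentiating the site-visit representation (a Girsanov-type score identity in $\lambda$) or by first-order perturbation of the invariant measure of the environment-from-the-particle chain — is the technical heart of the argument; the algebra that produces the explicit constants $v_\infty$ and $c$ is then routine.
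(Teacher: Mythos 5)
Your overall architecture is the paper's: an order-$e^{-\lambda}$ expansion of $v$ with an explicit constant, plus matching control of $v'$, plus the observation that the sign of $v'$ for large $\lambda$ then equals the sign of $\mu^2-p(1-p)$. Your expansion of $v$ is correct and is obtained by essentially the same mechanism as Proposition~\ref{pro:MonotoneSubsequence}: lateral attempts act, to first order, as an extra refresh of the front edge at rate $p(2d-2)Z_\lambda^{-1}$, so the problem reduces to a one-dimensional (essentially totally asymmetric) walk whose speed is $\mu p/(1-p+\mu)$ as in Lemma~\ref{lem:speedinonedim}; your constant $c$ agrees with the paper's coefficient of $Z_\lambda^{-1}$, and your $O(e^{-2\lambda})$ error control (revisited/pre-examined front edges, double lateral excursions, backward steps) is the same book-keeping the paper performs by stopping its coupling at the second lateral or first backward attempt and using the exponential tails of $\cU_a(\tau_1)$. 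Making the ``site-visit renewal'' rigorous would in practice push you back to the regeneration times of Section~\ref{sec.RegenerationTimes}, but that is a packaging issue, not a gap.

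The genuine gap is the derivative step, which you correctly identify as the technical heart but do not actually argue. The expansion $v(\lambda)=v_\infty+c\,e^{-\lambda}+\cO(e^{-2\lambda})$ cannot be differentiated: the $\cO(e^{-2\lambda})$ term could oscillate with derivative of order $e^{-\lambda}$ or larger, so monotonicity does not follow from the expansion of $v$ alone, and asserting $v'(\lambda)=-c\,e^{-\lambda}+\cO(e^{-2\lambda})$ with the \emph{exact} constant is precisely what still has to be proved. Your two suggested routes (differentiating the renewal representation via a Girsanov score identity, or perturbing the environment-seen-from-the-particle measure) face a real obstruction: in the score-identity formula of Lemma~\ref{lem:Differentiability} the two expectations are each of order one and cancel to order $e^{-\lambda}$, so extracting the first-order coefficient directly requires a delicate second-order analysis that you have not sketched. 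The paper circumvents exactly this difficulty in two moves you do not mention: (i) Lemma~\ref{lem:ApproximateDerivate} shows only that $v'(\lambda)=C_{\mu,p}e^{-\lambda}+\cO(e^{-2\lambda})$ for \emph{some} $\lambda$-independent constant $C_{\mu,p}$, by coupling $X^{\lambda}$ and $X^{\lambda+\varepsilon}$ through a common marked Poisson process and showing that, conditionally on a single discrepancy (``very bad'' point) and on $\cU_a(\tau_1)$, the expected displacements are functions of $(k,\ell)$ that do not depend on $\lambda$ or $\varepsilon$; (ii) the unknown constant is then identified by integrating $v'$ over $[\lambda,2\lambda]$ and matching against the expansion of $v$. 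Without ingredient (i) (or some substitute establishing $\lambda$-independence of the first-order coefficient of $v'$), your plan is incomplete; with it, the explicit value of the constant you aim for is not even needed a priori, since the integration trick recovers it from the expansion of $v$ you already have.
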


\begin{remark}\rm{
Note that this is in contrast to the biased random walk on a static super-critical percolation cluster, where the speed is known to be zero for large values of $\lambda$; see  \cite{BGP:SpeedPercolation,FH:SpeedPercolation,S:Anisotropic}.  
The criterion for the eventual monotonicity of the speed, identified in Theorem~\ref{thm:mainTwoDim}, is visualised in Figure~\ref{fig:EventuallyMonotone}. Moreover, this suggests different shapes of the speed functions; see Figure~\ref{fig:Shapes}.} 
\end{remark}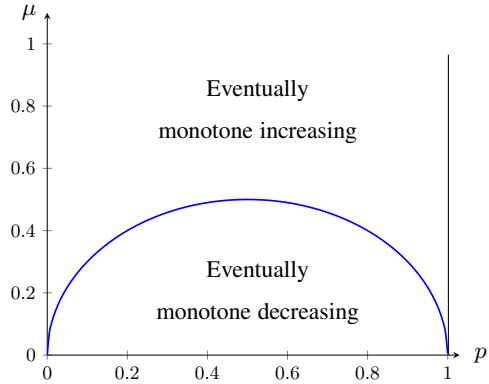
\begin{figure}\centering
\begin{tikzpicture}[scale=0.8]
\draw[thin](6.666,5) to (6.666,0);  

\begin{axis}[
    xmin = 0, xmax = 1.03,
    ymin = 0, ymax = 1.1, axis lines=left]
    \addplot[domain = 0:1,
        samples = 200,
        smooth,
        thick,
        blue,]{sqrt(x*(1-x)) )};
\end{axis}
\node[scale=1] (ab) at (3.5,1.4) {Eventually};
\node[scale=1] (ab) at (3.5,4.4) {Eventually};
\node[scale=1] (ab) at (3.5,0.7) {monotone decreasing};
\node[scale=1] (ab) at (3.5,3.7) {monotone increasing};

\node[scale=1] (ab) at (7.2,0) {$p$};

\node[scale=1] (ab) at (-0.3,5.7) {$\mu$};

\end{tikzpicture}
\caption{\label{fig:EventuallyMonotone}Plot of the different regimes in Theorem~\ref{thm:mainTwoDim} for large $\lambda$.}
\end{figure}
\subsection{Related work} 
Biased random walks in random media were investigated intensively over the last years, we refer to \cite{A:speed, AH09, BD:Percolation, BFS14, BGN:BiasedRWconductances, BGP:SpeedPercolation, BMT23, Bo19, Cr13, D:Percolation, DS:Trapping,   FGS18, FH:SpeedPercolation, FP18, GMM18, LPP:BiasedRW, S:Anisotropic} for a non-exhaustive list and to \cite{BF16} for a survey.
The most prominent examples are biased random walks on Galton-Watson trees  and biased random walks on supercritical percolation clusters, see~\cite{LPP:BiasedRW} and~\cite{BGP:SpeedPercolation, S:Anisotropic} respectively.
Our work is in particular motivated by the study of biased random walks on (static) supercritical percolation clusters. This model was introduced in 
\cite{BD:Percolation}. Due to traps in the cluster, the speed of the walk is zero for large values of the bias.  Simulations indicate that the speed is a unimodal function of the bias, first increasing until the maximum is achieved and then decreasing and eventually becoming zero, see \cite{DS:Trapping}. Indeed, in the breakthrough paper~\cite{FH:SpeedPercolation}, it was shown that there is a critical value separating the positive speed regime from the zero speed regime.
In the case of Galton-Watson trees with leaves, this phase transition was known earlier and there is an explicit formula for the critical value, see~\cite{LPP:BiasedRW}.
When there are no ``hard traps", the speed of the biased walk should be strictly positive, and one may ask if it is increasing as a function of the bias.
In the case of a Galton-Watson tree without leaves, it is conjectured that the speed is indeed an increasing function of the bias.
While this conjecture still remains open, it was proved that the speed is eventually increasing, see \cite{BFS14} and~\cite{A:speed}.
The same argument as in \cite{BFS14} gives that for biased random walk among uniformly elliptic i.i.d.\ conductances, the speed is eventually increasing as a function of the bias. However, it was shown in~\cite{BGN:BiasedRWconductances} that for some laws of the conductances, the speed is not increasing for all values of the bias, i.e.\ there exist~$\lambda_1 < \lambda _2$ such that $v(\lambda_1) > v(\lambda_2)$.\\
In the presence of hard traps, a central limit theorem is expected to hold for small values of the bias, in a strict subset of the positive speed regime.
This was proved for biased random walk on supercritical percolation clusters in \cite{S:Anisotropic} and \cite{BGP:SpeedPercolation} and for random walks on Galton-Watson trees with leaves in \cite{LPP:BiasedRW}. For other models such results have been established for example in~\cite{GMM18} and~\cite{Bo19}.
In an environment without hard traps, there are examples where a central limit theorem holds for all values of the bias, see for instance~\cite{PZ08, Sh02}.
In our case, this also turns out to be true, see Proposition~\ref{pro:Invariance}.

The literature on (unbiased) random walks in time-dependent random environments is too vast to give a review, we just point to two papers which are relevant in our setup, namely
\cite{BZ:InvarianceQuenched, BR:TimeDep}, see also the references therein.
Unbiased random walks on dynamical percolation have been studied in particular in terms of their mixing times, see \cite{HS:ComparisonDynamicalPercolation, PSS:QuenchedMixing, PSS:MixingSupercriticialPercolation, PSS:DynamicalPercolation, ST:CutoffErdos}.

\subsection{Overview of proof ideas}
The proof of the first part of Theorem~\ref{thm:main} follows by using a suitably defined sequence of regeneration times $(\tau_i)$, i.e.\ a sequence of random times such that the evolution of the walk and the environment between $[\tau_i,\tau_{i+1}]$ is independent for different choices of $i$. A key property of the regeneration times that we define is that their distribution only depends on the parameter $\mu$, but not on the bias parameter $\lambda$ and the percolation parameter $p$. Using these regeneration times and a law of large numbers we get the existence of the speed.  Moreover, we find an expression for the speed in terms of an infinite series which allows to give a simple expression for its derivative; see Lemma~\ref{lem:Differentiability}. In particular, the speed is strictly increasing for all~$\lambda$ sufficiently small.

We now give an overview of the main ideas behind the proof of our main result, Theorem~\ref{thm:mainTwoDim}, giving a necessary and sufficient condition for the speed to be eventually in $\lambda$ strictly increasing or decreasing. There are two main ingredients. First, in Proposition~\ref{pro:MonotoneSubsequence}, we obtain an asymptotic expression for the speed which is valid for all bias parameters $\lambda$ sufficiently large. Next, in Lemma~\ref{lem:ApproximateDerivate}, we give an asymptotic bound on the derivative of the speed for large $\lambda$. The proof of Theorem~\ref{thm:mainTwoDim} is a direct consequence of these two results. 

In order to prove Proposition~\ref{pro:MonotoneSubsequence}, we start with a detailed analysis of the speed in the one-dimensional case. In order to analyse the case $d\geq 2$, we rely on the regeneration times and compare the first coordinate of the walk with a time-changed one-dimensional walk in a suitably defined evolving environment. To be more precise, we construct a coupling which keeps the first coordinates of the two walks together until the second time that the $d$-dimensional walk jumps in a direction other than $\eone$. 

In order to prove Lemma~\ref{lem:ApproximateDerivate} we rely on a comparison between walkers with different bias parameters using marked Poisson point processes. A key task is to develop an asymptotic expression for the derivative of the speed on the scale $e^{-\lambda}$, with the constants only depending on $\mu$ and $p$. 


\begin{figure}\centering
\begin{tikzpicture}[scale=0.6]

   \draw[->] (-0.3, 0) -- (6, 0) node[right] {$\lambda$};
  \draw[->] (0, -0.3) -- (0, 4) node[above] {$v(\lambda)$};
  \draw[scale=1, domain=0:6, smooth, variable=\x, blue] plot ({\x}, {(2*\x*\x*\x*\x-5*\x*\x+8*\x)/(\x*\x*\x*\x+1)});
    \draw[ domain=0:6, smooth, variable=\y, densely dotted, thick]  plot ({\y}, {1.95});
 
  \draw[->] (-0.3+8, 0) -- (6+8, 0) node[right] {$\lambda$};
  \draw[->] (0+8, -0.3) -- (0+8, 4) node[above] {$v(\lambda)$};
  \draw[scale=1, domain=0:6, smooth, variable=\x, blue] plot ({8+\x}, {(2*\x*\x*\x+5*\x)/(\x*\x*\x+1)});
  \draw[ domain=0:6, smooth, variable=\y, densely dotted, thick]  plot ({\y+8}, {2}); 

  \draw[->] (-0.3+8+8, 0) -- (6+8+8, 0) node[right] {$\lambda$};
  \draw[->] (0+8+8, -0.3) -- (0+8+8, 4) node[above] {$v(\lambda)$};
  \draw[scale=1, domain=0:6, smooth, variable=\x, blue] plot ({\x+16}, {(2*\x*\x*\x*\x-2*\x*\x+1.6*\x)/(\x*\x*\x*\x+1)});
    \draw[ domain=0:6, smooth, variable=\y, densely dotted, thick]  plot ({\y+16}, {2});
\end{tikzpicture}
\caption{\label{fig:Shapes}The different pictures show three possible shapes for $v(\lambda)$ which are in accordance  with Theorem \ref{thm:mainTwoDim}.}
\end{figure}
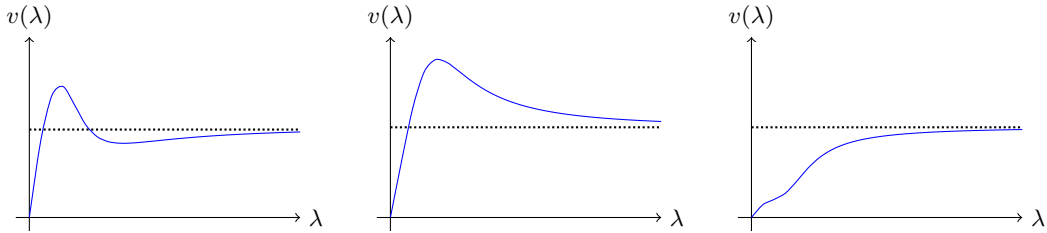

\subsection{Organisation}
In Section~\ref{sec.RegenerationTimes} we define our sequence of regeneration times that will be used in the rest of the paper. In Section~\ref{sec:SpeedRWSpeed} we prove Theorem~\ref{thm:main}. In Section~\ref{sec:monotonicity} we prove Theorem~\ref{thm:mainTwoDim} and we also study the one-dimensional case in Section~\ref{sec:SpeedRWOneDim} where we establish the strict monotonicity of the speed for all values of $\lambda$. Finally, in Section~\ref{sec:SpeedRWTwoDim} we prove that the speed in $d\geq 2$ is strictly increasing when $\mu$ is sufficiently large, or~$p$ is sufficiently close to $1$.

\subsection{Open problems}\label{sec:OpenProblems}

We finish this section by stating a couple of open questions. The first question concerns the monotonicity of the speed for general directions of the drift.

\begin{question} Consider the situation where the bias of the random walk in dynamical percolation on $\Z^d$ for $d\geq 2$ is not along one of the axes $\pm \textup{e}_i$ with some $i \in \{1,\dots, d\}$. In analogy to Theorem \ref{thm:mainTwoDim}, is there an explicit criterion to decide whether the speed is eventually monotone increasing in $\lambda$?
\end{question}

The second open question concerns the monotonicity of the speed along the critical curve established in Theorem \ref{thm:mainTwoDim}. 

\begin{question} For $\mu^2=p(1-p)$ is the speed eventually monotone increasing in $\lambda>0$?
\end{question}

The last open question concerns the case whether an eventually monotone increasing speed implies that the speed is monotone in $\lambda>0$.

\begin{question} Is there a choice of parameters $\mu>0$ and $p \in (0,1)$ with $\mu^2>p(1-p)$ such that $\lambda \mapsto v(\lambda)$ is not monotone increasing in $\lambda>0$?
\end{question}

%

%

\section{Regeneration times for the biased random walk}\label{sec.RegenerationTimes}

Throughout the paper we write $\mathbb{P}^{p,\mu}$ for the probability measure corresponding to dynamical percolation with parameters~$\mu$ and $p$. For $\lambda\geq 0$, we write $\mathbb{P}_\lambda$ for the semi-direct product of $\mathbb{P}^{p,\mu}$ with the law of the $\lambda$-biased random walk starting at the origin. We write $\mathbb{E}_\lambda$ for the expectation with respect to $\mathbb{P}_\lambda$.

In order to prove Theorem \ref{thm:main} we need to define a sequence of regeneration times for the random walk on dynamical percolation. A similar definition of regeneration times was used by Peres, Stauffer and Steif in~\cite{PSS:DynamicalPercolation}. Here we use the definition given in~\cite{HS:ComparisonDynamicalPercolation} which works for general underlying graphs and was used in~\cite{HS:ComparisonDynamicalPercolation} to compare mixing and hitting times for random walks on dynamical percolation in terms of the respective quantities for the static graph. For the biased random walk on $\Z^d$, we have the following construction following~\cite[Section~3]{HS:ComparisonDynamicalPercolation}.

\begin{figure}\centering
\begin{tikzpicture}[scale=0.35]

\pgfmathsetseed{1332}

\foreach \x in{1,...,15}{
	\draw[gray!50,thin](1,\x) to (15,\x);  
	\draw[gray!50,thin](\x,1) to (\x,15);   
}
	
\foreach \x in{1,...,15}{	
\foreach \y in{1,...,14}{

     \pgfmathparse{rnd}
    \let\dummynum=\pgfmathresult
    \ifdim\pgfmathresult pt < 0.5 pt\relax \draw[red,line width=1pt] (\x,\y) --(\x,\y+1);\fi

}
	}

\foreach \x in{1,...,14}{	
\foreach \y in{1,...,15}{ 
 
     \pgfmathparse{rnd}
    \let\dummynum=\pgfmathresult
    \ifdim\pgfmathresult pt < 0.5 pt\relax \draw[red,line width=1pt] (\x,\y) --(\x+1,\y);\fi    
    
}
	}

	\draw[fill,black](6.8+3,7.8+2) rectangle (7.2+3,8.2+2);   

\node[scale=1] (ab) at (6.5+2.7,8.5+2) {$X_t$};

\def\z{18};

\foreach \x in{1,...,15}{
	\draw[gray!50,thin](1+\z,\x) to (15+\z,\x);  
	\draw[gray!50,thin](\x+\z,1) to (\x+\z,15);   
}
	
\foreach \x in{1,...,15}{	
\foreach \y in{1,...,14}{

     \pgfmathparse{rnd}
    \let\dummynum=\pgfmathresult
    \ifdim\pgfmathresult pt < 0.5 pt\relax \draw[red!50,line width=1pt,densely dotted] (\x+\z,\y) --(\x+\z,\y+1);\fi

}
	}

\foreach \x in{1,...,14}{	
\foreach \y in{1,...,15}{ 
 
     \pgfmathparse{rnd}
    \let\dummynum=\pgfmathresult
    \ifdim\pgfmathresult pt < 0.5 pt\relax \draw[red!50,line width=1pt,densely dotted] (\x+\z,\y) --(\x+1+\z,\y);\fi    
    
}
	}

\draw[blue,line width=1.5pt] (7+\z,8)--++(-1,0);
\draw[blue,line width=1.5pt] (7+\z,8)--++(0,1);
\draw[blue,line width=1.5pt] (6+\z,8)--++(-1,0);
\draw[blue,line width=1.5pt] (6+\z,7)--++(0,-1);

\draw[blue,line width=1.5pt] (3+\z,8)--++(1,0);
\draw[blue,line width=1.5pt] (8+\z,8)--++(0,1);
\draw[blue,line width=1.5pt] (6+\z,5)--++(1,0);

\draw[red,line width=1.5pt] (7+\z,8)--++(2,0);
\draw[red,line width=1.5pt] (7+\z,7)--++(1,0);
\draw[red,line width=1.5pt] (5+\z,7)--++(1,0);
\draw[red,line width=1.5pt] (8+\z,8)--++(0,-2);
\draw[red,line width=1.5pt] (5+\z,6)--++(1,0);

\draw[red,line width=1.5pt] (5+\z,8)--++(0,1);

\draw[fill,black](6.8+\z,7.8) rectangle (7.2+\z,8.2);   
	
\node[scale=1] (ab) at (6.3+\z,8.6) {$X_t$};

\end{tikzpicture}
\caption{\label{fig:Dynamic} Visualisation of the dynamical percolation cluster and the infected set $I_t$.
The edges in red on the left correspond to open edges. In the right picture, blue edges correspond to closed edges, which are infected at time $t$, red edges correspond to open edges that have a copy in $I_t$ and dotted red edges correspond to open edges that do not have a copy in $I_t$.}
\end{figure}
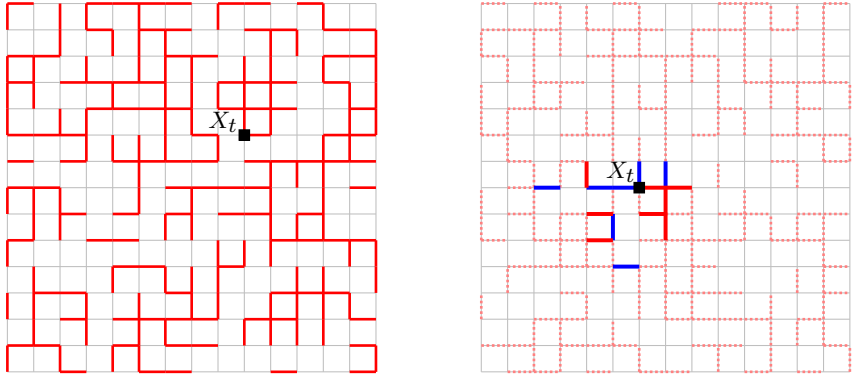

We fix an enumeration $(e_i)_{i \in \N}$ of the edges in $\Er$ according to an arbitrary rule. Then for each edge~$e_i$, we create an infinite number of copies denoted $e_{i,1}, e_{i,2}, \dots$. We now define a process $(I_t)_{t \geq 0}$, where for every $t\geq 0$, $I_t$ is a set containing copies of edges that we refer to as the \textbf{infected set}. \footnote{Note that in~\cite{HS:ComparisonDynamicalPercolation} the infected set can contain also the original edges.} 
Let $I_0=\emptyset$. Suppose that for some $t \geq 0$, the Poisson clock associated with the random walk $(X_t)_{t \geq 0}$ rings and that the walker examines the edge $e_i$ for some $i\in \N$. If no copy of  $e_i$ is contained in $I_{t_-}$, we set
\begin{equation}
I_{t} := I_{t_-} \cup \{e_{i,1} \} \, .
\end{equation} Otherwise, we add to $I_t$ the copy $e_{i,j}$ of $e_i$ with the smallest index $j$ such that $e_{i,j} \notin I_{t_-}$. 

Next, for all $t \geq 0$, we assign the lexicographic ordering $\preceq$ to the edges in $I_t$ using the ordering of the edges of $E$, i.e.\ for $e_{i,j},e_{k,l} \in I_t$ we have
\begin{equation}
e_{i,j} \preceq e_{k,l} \quad  \Leftrightarrow \quad (i \leq k) \vee ((i=k) \wedge (j \leq l))
\end{equation}
Further, let $(N_t)_{t \geq 0}$ be a Poisson process with time dependent intensity $\mu |I_t|$. Whenever a clock of this process rings at time $t$, we choose an index uniformly at random from $\{1,\dots,|I_t|\}$ and remove the copy of the edge with this index in $I_t$ according to the ordering $\preceq$. Moreover, if the picked copy is of the form $e_{i,1}$ for some $i\in \N$, we refresh the state of the edge $e_i$ in the environment $\eta_t$, i.e.\ we set $\eta_t(e_i)=1$ with probability $p$, and $\eta_t(e_i)=0$, otherwise. 

For all edges $e_j$ with $e_{j,1} \notin I_t$, we use independent rate $\mu$ Poisson clocks to determine when the state of the edge in $(\eta_t)_{t \geq 0}$ is refreshed. Note that with this construction $(X_t,\eta_t)_{t \geq 0}$ has indeed the correct transition rates, see Figure~\ref{fig:Dynamic} for an illustration.

Recall that we start from $\eta_0 \sim \pi_p$, $X_0=0$, and that we set $I_0=\emptyset$. Let $\tau_0:=0$. For every $i\in \N$, we set
\begin{equation}\label{def:RegenTimes}
\tau_{i+1}:= \inf\{ t > \tau_{i} \colon I_{t} = \emptyset \text{ and } I_{t^{\prime}} \neq \emptyset \text{ for some } t^{\prime} \in (\tau_i,t)  \} \, .
\end{equation}
Let $\N_0:= \N \cup \{0\}$ and note that the times $(\tau_i)_{i \in \N_0}$ are indeed \textbf{regeneration times} for the process $(X_t)_{t \geq 0}$, i.e.\ $(\tau_{i}-\tau_{i-1})_{i \geq 1}$ are i.i.d.\ and the random walk increments $(X_{\tau_i}-X_{\tau_{i-1}})_{i \geq 1}$ are i.i.d. 

\begin{remark}
Let us stress that the law of $(\tau_i)_{i \in \N_0}$ only depends on the parameter $\mu$ and not on $\lambda$ or $p$. Further, in contrast to many standard constructions of regeneration times, we have here the advantage that the regeneration times are also finite in the case $\lambda=0$.
\end{remark}

Observe that the process $(|I_t|)_{t \geq 0}$ is a continuous-time birth-and-death chain on $\N_0$ with transition rates $q(i-1,i)=1$ and $q(i,i-1)=\mu i$ for all $i\in \N$. The following lemma is the content of~{\cite[Lemma~3.5]{HS:ComparisonDynamicalPercolation}}.

\begin{lemma}\label{lem:ExpoTailsTau} For all $p\in (0,1)$, for all $\lambda\geq 0$, and for all $\mu>0$, the increments $(\tau_{i}-\tau_{i-1})_{i \in \N}$ are i.i.d., have exponential tails and satisfy $\estart{\tau_1}{\lambda} = e^{1/\mu}$.

\end{lemma}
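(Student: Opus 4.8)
The plan is to exploit the explicit description of $(|I_t|)_{t\ge 0}$ as a birth-and-death chain on $\N_0$ with birth rate $q(i-1,i)=1$ (for $i\ge 1$) and death rate $q(i,i-1)=\mu i$, observing that the regeneration times $\tau_i$ are exactly the times at which this chain returns to $0$ after having left it. Since the walker's clock rings at rate $1$ and adds a copy to $I_t$ whenever it fires, and copies are removed at total rate $\mu|I_t|$, the process $(|I_t|)_{t\ge 0}$ is autonomous: its law depends only on $\mu$, which immediately gives that the law of $(\tau_i)$ depends only on $\mu$ (as noted in the Remark) and reduces everything to a one-dimensional computation. The increments $(\tau_i-\tau_{i-1})$ are i.i.d.\ because each excursion of $|I_t|$ away from $0$ starts afresh from state $1$ by the strong Markov property, and this is exactly the assertion that $(\tau_i)$ are regeneration times.

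For the exponential tails, I would first note that an excursion of $|I_t|$ away from $0$ consists of a geometrically-bounded number of jumps: from state $i\ge 1$ the chain moves down with probability $\mu i/(1+\mu i)\ge \mu/(1+\mu)$, so the excursion length (number of jumps) is stochastically dominated by a geometric random variable, hence has exponential tails. Between jumps the holding times are exponential with rate at least $1$ (the birth rate alone), so they are dominated by i.i.d.\ $\mathrm{Exp}(1)$ variables; actually one should be slightly careful since holding times at large states are short, but domination by $\mathrm{Exp}(1)$ holds uniformly. Then $\tau_1$ is bounded by a geometric sum of i.i.d.\ exponentials, which is well known to have exponential tails (e.g.\ via a moment generating function computation: $\Ex{e^{s\tau_1}}<\infty$ for $s$ small enough). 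Combined with independence of excursion length and the holding-time structure — or more cleanly, by bounding $\tau_1\le \sum_{k=1}^{L} \xi_k$ with $L$ geometric and $\xi_k$ i.i.d.\ $\mathrm{Exp}(1)$ independent of $L$ — this gives the exponential tail bound.

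For the expectation $\estart{\tau_1}{\lambda}=e^{1/\mu}$, the cleanest route is to compute $\Ex{\tau_1}$ directly for the birth-and-death chain. Let $h(i)$ denote the expected time for $|I_t|$ started from $i$ to reach $0$; then $\Ex{\tau_1}=$ (expected holding time at $0$, namely $1$, since $|I_t|$ leaves $0$ at rate $1$) $+\,h(1)$. Wait — more precisely $\tau_1$ is the first return to $\emptyset$ after leaving it, so $\tau_1 = (\text{time to leave }0) + h(1)$ where the first term is $\mathrm{Exp}(1)$ with mean $1$. So it remains to show $h(1)=e^{1/\mu}-1$. The function $h$ satisfies the recursion $(1+\mu i)h(i) = 1 + h(i-1) + \mu i \, h(i+1)$ for $i\ge 1$ with $h(0)=0$; solving this standard linear recursion (e.g.\ setting $g(i)=h(i)-h(i-1)$, which satisfies $\mu i\, g(i+1) = g(i) - 1$, equivalently $g(i+1) = g(i)/(\mu i) - 1/(\mu i)$) and imposing the boundary condition at infinity (growth/summability, which is where one uses positive recurrence — guaranteed since death rates grow linearly) pins down $g$ and yields $h(1)=\sum_{k\ge 1} 1/(\mu^k k!)=e^{1/\mu}-1$. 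Hence $\estart{\tau_1}{\lambda}=1+(e^{1/\mu}-1)=e^{1/\mu}$, independent of $\lambda$ and $p$. Since this is quoted from \cite[Lemma 3.5]{HS:ComparisonDynamicalPercolation}, one may alternatively simply cite it, but the self-contained argument above is short.

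The main obstacle is the boundary condition at infinity in solving the recursion for $h$: one must rule out the homogeneous solution growing at infinity, which requires knowing a priori that $h(i)<\infty$ for all $i$ (i.e.\ that $|I_t|$ is positive recurrent and does not explode). This follows from the linear growth of the death rates — a standard Lyapunov/Foster argument with test function $V(i)=i$ gives $q V(i) = 1 - \mu i \le -1$ for $i\ge 2/\mu$, establishing non-explosion and positive recurrence with a bound on return times — but it is the one place where a little care beyond routine calculation is needed. Everything else (i.i.d.\ increments, exponential tails via geometric domination) is soft.
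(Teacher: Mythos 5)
Your overall architecture is sound and is essentially a self-contained verification of the cited result (the paper itself offers no proof of this lemma, quoting {\cite[Lemma~3.5]{HS:ComparisonDynamicalPercolation}}): the process $(|I_t|)_{t\ge0}$ is an autonomous birth-and-death chain with rates $q(i-1,i)=1$ and $q(i,i-1)=\mu i$, the $\tau_i$ are its successive returns to $0$, and the i.i.d.\ property follows from the strong Markov property. However, your exponential-tail step contains a genuine gap. From the bound ``down-probability $\mu i/(1+\mu i)\ge \mu/(1+\mu)$ at every state'' you cannot conclude that the excursion length is geometrically dominated: the comparison chain with \emph{constant} down-probability $\mu/(1+\mu)$ is null recurrent when $\mu=1$ and transient upward when $\mu<1$, so its return time to $0$ has polynomial tails or is infinite with positive probability; your argument therefore fails for all $\mu\le 1$ (and even for $\mu>1$ comparison with a biased walk yields exponential tails, not literal geometric domination, though that would suffice there). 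A correct argument must exploit the growth of the death rates $\mu i$: for instance, above level $N\approx 2/\mu$ the down-probability exceeds $2/3$ and one controls separately the (uniformly fast) descents below $N$, or one runs an exponential Lyapunov/supermartingale argument (e.g.\ $V(i)=A^i$ satisfies $LV(i)\le-\delta V(i)$ for $i$ large) for the actual chain; note that the paper's own domination by the constant-death-rate walk in the proof of Lemma~\ref{cor:exptailsofutauone} is only used in the regime of large $\mu$.

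The computation of the mean also has a concrete error: your recursion attaches the rates to the wrong neighbours. From $i\ge1$ the chain jumps to $i+1$ at rate $1$ and to $i-1$ at rate $\mu i$, so the correct equation is $(1+\mu i)h(i)=1+h(i+1)+\mu i\,h(i-1)$, which with $g(i)=h(i)-h(i-1)$ gives $g(i+1)=\mu i\,g(i)-1$, i.e.\ $g(i)=(1+g(i+1))/(\mu i)$. Iterating forward, $g(1)=\sum_{k=1}^{n}\mu^{-k}/k!+g(n+1)/(\mu^n n!)$, and since $0\le g(n+1)\le g(1)<\infty$ (finiteness is exactly the positive-recurrence input you correctly flag), the remainder vanishes and $g(1)=\sum_{k\ge1}\mu^{-k}/k!=e^{1/\mu}-1$, whence $\estart{\tau_1}{\lambda}=1+g(1)=e^{1/\mu}$. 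Your recursion $\mu i\,g(i+1)=g(i)-1$ describes instead the chain with birth rate $\mu i$ and death rate $1$ (which is explosive, with infinite hitting times) and does not yield the stated value: e.g.\ for $\mu=1$, starting it from $g(1)=e-1$ forces $g(3)<0$, contradicting monotonicity of $h$. The final formula you assert is correct; a quicker route, consistent with your setup, is to observe that $\pi=\mathrm{Poisson}(1/\mu)$ is stationary for $|I_t|$ and the mean return time to $0$ equals $(\pi(0)\,q(0))^{-1}=e^{1/\mu}$.
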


 Therefore, with a slight abuse of notation, we will write $\P$ instead of $\P_{\lambda}$ when considering events involving only the regeneration times. 
For every $t\geq 0$ we let $\cU_a(t)$ be the number of attempted jumps of the walker $X$ up to time $t$, which follows the Poisson distribution with parameter $t$. We have the following result on 
$\cU_a(\tau_1)$.

\begin{lemma}\label{cor:exptailsofutauone}
	For every $\mu>0$ and $p\in (0,1)$, there exists a positive constant $c_{\mu}$ satisfying $c_\mu\to \infty$ as $\mu\to\infty$ so that for all~$m\geq 2$ 
		\[
	\pr{\cU_a(\tau_1)\geq m} \leq e^{-c_{\mu} m}.
	\]
\end{lemma}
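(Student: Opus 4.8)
The plan is to relate $\cU_a(\tau_1)$, the number of attempted jumps before the first regeneration time, to the behaviour of the birth-and-death chain $(|I_t|)_{t\geq 0}$ with birth rate $1$ and death rate $\mu i$ from state $i$, and then bound the relevant quantity by a direct combinatorial/probabilistic estimate. The key observation is that every attempted jump of the walker either leaves the infected set unchanged (if the examined edge already has a copy of itself in $I_t$ with index $1$, i.e.\ a refresh is pending — actually no: re-examining an edge adds a \emph{new} copy $e_{i,j}$), so in fact \emph{each} attempted jump in $(\tau_0,\tau_1]$ triggers a birth event in $(|I_t|)$. Hence $\cU_a(\tau_1)$ is at most the total number of up-steps of the chain $(|I_t|)$ during one excursion from $0$ (started by a birth) back to $0$. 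So it suffices to show that the number of up-jumps in such an excursion has exponential tails with rate $c_\mu\to\infty$.

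First I would set up the embedded discrete-time jump chain of $(|I_t|)$: from state $i\geq 1$ it goes to $i+1$ with probability $\frac{1}{1+\mu i}$ and to $i-1$ with probability $\frac{\mu i}{1+\mu i}$; from $0$ it goes to $1$ with probability $1$. Let $U$ be the number of up-steps in the excursion from $0$ to $0$. The cleanest route is a domination argument: compare this chain with the simpler biased random walk on $\N_0$ that has up-probability $q:=\frac{1}{1+\mu}$ (the largest up-probability occurring, attained at state $1$) and down-probability $1-q$, reflected at $0$. Since $i\mapsto \frac{1}{1+\mu i}$ is decreasing, a standard coupling shows the number of up-steps in the true excursion is stochastically dominated by the number of up-steps in an excursion of this homogeneous $q$-biased walk. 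For the homogeneous walk, the number of up-steps before returning to $0$ is a geometric-type random variable: each time we are at height $\geq 1$, the probability of eventually taking another up-step before hitting $0$ is bounded by some $\rho=\rho(\mu)<1$ with $\rho\to 0$ as $\mu\to\infty$ (explicitly, for a walk with down-drift one can compute the probability that the maximum of the excursion exceeds any given level decays geometrically). This yields $\pr{U\geq m}\leq \rho^{m-1}$, and taking $c_\mu=\log(1/\rho)$ — adjusted by a constant to absorb the shift to $m\geq 2$ and the factor $\rho^{-1}$ — gives $\pr{U\geq m}\leq e^{-c_\mu m}$ with $c_\mu\to\infty$.

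Then I would combine: $\cU_a(\tau_1)\leq U$ deterministically (each attempted jump is one birth event of the excursion that constitutes $[\tau_0,\tau_1]$, and no births occur outside attempted jumps), so $\pr{\cU_a(\tau_1)\geq m}\leq \pr{U\geq m}\leq e^{-c_\mu m}$ for all $m\geq 2$, after choosing $c_\mu$ slightly smaller to cover the small-$m$ range and the reindexing. One should double check the edge case: the excursion $[\tau_0,\tau_1]$ starts at $\tau_0=0$ with $I_0=\emptyset$, the first attempted jump creates $I=\{e_{i,1}\}$ hence the first birth, and $\tau_1$ is the first return to $\emptyset$; every subsequent attempted jump in between is again a birth, so the bookkeeping $\cU_a(\tau_1)=\#\{\text{births in the excursion}\}$ is exact, and this equals $U$ (or $U$ plus handling of whether we count the initial birth). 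This is clean.

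The main obstacle is making the stochastic domination of the inhomogeneous birth-death excursion by a homogeneous biased walk fully rigorous and getting the rate $c_\mu\to\infty$ with the right dependence: one must argue that because the up-probabilities $\frac{1}{1+\mu i}$ are all $\leq \frac{1}{1+\mu}$, an excursion of the true chain can be coupled step-by-step so that it never takes an up-step when the dominating walk takes a down-step, and hence returns to $0$ no later and with no more up-steps. A careful way is to couple via a common sequence of uniform random variables, comparing each step's up-probability against the thresholds; since the true threshold is always at most the dominating one, the coupling keeps the true walk $\leq$ the dominating walk and in particular the true excursion length (in up-steps) is dominated. The quantitative tail for the homogeneous down-biased walk — number of up-steps in an excursion has tail $\rho^{m}$ with $\rho=\frac{q}{1-q}=\frac{1}{\mu}$ roughly — then directly gives $c_\mu\sim\log\mu\to\infty$. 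Everything else is routine.
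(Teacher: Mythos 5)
Your overall route coincides with the paper's: identify $\cU_a(\tau_1)$ with the number of up-steps of the jump chain of $(|I_t|)$ during its excursion from $0$ (the paper records this as $2\,\cU_a(\tau_1)=\tilde{\tau}_0$), then dominate that chain by the homogeneous walk with up-probability $q=1/(1+\mu)$. The genuine gap is in the step where you extract the exponential tail for the number of up-steps of the homogeneous excursion. Your claim that ``each time we are at height $\geq 1$, the probability of eventually taking another up-step before hitting $0$ is bounded by some $\rho(\mu)<1$'' is false: after an up-step the walk may be at height $h\geq 2$, and from height $h$ the probability of at least one further up-step before reaching $0$ is $1-(1-q)^h$, which tends to $1$ as $h$ grows, so there is no uniform $\rho<1$. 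Your parenthetical repair --- geometric decay of the probability that the excursion maximum exceeds a given level --- controls the height, not the number of up-steps: the walk can make arbitrarily many up-steps while oscillating below a fixed level. This step needs a real argument, e.g.\ the paper's exponential supermartingale $M_k=e^{\theta S_k}f(\theta)^{k-1}$ with $f(\theta)=(\mu+1)/(e^{\theta}+e^{-\theta}\mu)$, optional stopping at $\tilde{\tau}_0$ and Markov's inequality with $\theta=\log\log\mu$; or a Chernoff bound of the form $\pr{\tilde{\tau}_0>n}\leq \pr{\mathrm{Bin}(n,q)\geq n/2}$; or the branching recursion $\phi(s)=(1-q)+qs\,\phi(s)^2$ for the generating function of the up-step count, whose radius of convergence $(4q(1-q))^{-1}>1$ gives a geometric tail with ratio roughly $4/\mu$.

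A second gap concerns small $\mu$. Your entire bound passes through the homogeneous walk with up-probability $1/(1+\mu)$, which is down-biased only when $\mu>1$; for $0<\mu\leq1$ the dominating excursion has no exponential tail (for $\mu<1$ it is even infinite with positive probability), so your argument does not establish the lemma for every $\mu>0$ as the statement requires. The paper splits the two assertions: the existence of some positive $c_\mu$ for all $\mu>0$ follows from the exponential tails of $\tau_1$ (Lemma~\ref{lem:ExpoTailsTau}) together with Poisson concentration of the number of attempted jumps, and the domination/supermartingale computation is only invoked to show $c_\mu\to\infty$, where the restriction to large $\mu$ is harmless. You would need to add an argument of this type (or one that genuinely uses the growing death rate $\mu i$ rather than its value at $i=1$) to cover $\mu\leq1$.
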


\begin{proof}[\bf Proof]
The fact that the random variable $\cU_a(\tau_1)$ has exponential tails is an immediate consequence of Lemma~\ref{lem:ExpoTailsTau} and the exponential concentration of a Poisson random variable around its mean. 
It remains to show that we can choose $c_{\mu}$ such that $c_\mu\to \infty$ as $\mu\to\infty$. Let $\mu>1$. Recall the birth and death chain $(|I_t|)_{t \geq 0}$, and let $(S_k)_{k\geq 0}$ with $S_0=0$ denote its jump chain. Further, let 
\begin{equation}
\tilde{\tau}_0 := \inf\{ n \geq 1	 \colon  S_n=0 \}
\end{equation}
be the  first return time of $(S_k)_{k\geq 0}$ to the origin and observe that $2\cU_a(\tau_1)=\tilde{\tau}_0$. Note that the process $(|I_t|)_{t \geq 0}$ 
is dominated from above by a biased random walk on $\{0,1,\dots\}$ with transition rates $q(i,i-1)= \mu$ and $q(i-1,i)=1$ for all $i\in \N$. Hence, we get for all~$\theta> 0$ that the process $(M_k)_{k \in \N}$ defined by
\[
M_k := e^{\theta S_k} \cdot  f(\theta)^{k-1}
\]
 with $f(\theta):=(\mu+1)/(e^\theta + e^{-\theta}\mu)$ is a super-martingale. Since almost surely $M_1=e^{\theta}$, and 
$\tilde{\tau}_0$ has exponential tails, we can apply the optional stopping theorem together with Fatou's lemma to obtain
	\begin{align*}
e^{\theta} = \E[M_1] \geq \E[M_{\tilde{\tau}_0}] =	 \estart{\exp(\theta S_{\tilde{\tau}_0}) f(\theta)^{\tilde{\tau}_0-1}}{} = \estart{(f(\theta))^{\tilde{\tau}_0-1}}{} 
	\end{align*}
for all $\theta>0$. Take $\tilde{\theta}=\log \log \mu$ for $\mu>0$ sufficiently large such that $f(\tilde{\theta}) \geq 1$. Then, we get that for all $m\geq 2$ by Markov's inequality
\begin{equation}
\pr{\cU_a(\tau_1)\geq m} = \pr{\tilde{\tau}_0 - 1 \geq 2m-1} \leq \frac{\estart{(f(\tilde{\theta}))^{\tilde{\tau}_0-1}}{} }{f(\tilde{\theta})^{2m-1}} \leq e^{\tilde{\theta}}f(\tilde{\theta})^{-(2m-1)} . 
\end{equation}
Since $f(\tilde{\theta}) \geq \frac{1}{2} \log \mu$ for all $\mu>0$ sufficiently large, we conclude.
\end{proof}

\section{Speed and Einstein relation}\label{sec:SpeedRWSpeed}

 We will now show Theorem \ref{thm:main} in two steps. First, we prove in Proposition \ref{pro:RegenerationSpeed} a law of large numbers for the biased random walk on dynamical percolation. Then in Proposition~\ref{pro:Invariance}, we prove an invariance principle. Both proofs use similar arguments to~\cite[Theorem 3.1]{PSS:DynamicalPercolation}. In Proposition~\ref{pro:Einstein} we show that the speed in the $\eone$-direction is strictly positive.

\begin{proposition}\label{pro:RegenerationSpeed} Recall the sequence of regeneration times $(\tau_i)_{i \in \N}$ from \eqref{def:RegenTimes}. Then, 
$\P_{\lambda}$-almost surely 
\begin{equation}\label{eq:SpeedViaTau}
\lim_{t \rightarrow \infty} \frac{X_t}{t} = (v(\lambda),0,\dots,0) = \frac{\E_{\lambda}[X_{\tau_1}]}{\E[\tau_1]} \, ,
\end{equation} 
and, writing $(X_t)_{t\geq 0}=(X_t^1,\ldots,X_t^d)_{t\geq 0}$, we also have
\begin{equation}\label{eq:SpeedViaTau2}
\lim_{t \rightarrow \infty} \E_{\lambda}\left[\frac{X^1_t}{t}\right] = \frac{\E_{\lambda}[X^1_{\tau_1}]}{\E[\tau_1]} \, .
\end{equation} 
\end{proposition}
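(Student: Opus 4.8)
The plan is to prove \eqref{eq:SpeedViaTau} by a standard regeneration argument and then upgrade to the expectation statement \eqref{eq:SpeedViaTau2} via uniform integrability. First I would introduce the counting process $k(t):=\max\{i\ge 0 : \tau_i\le t\}$ of completed regeneration epochs by time $t$. By Lemma~\ref{lem:ExpoTailsTau} the increments $(\tau_i-\tau_{i-1})_{i\ge1}$ are i.i.d.\ with $\E[\tau_1]=e^{1/\mu}\in(0,\infty)$, so by the strong law of large numbers $\tau_n/n\to\E[\tau_1]$ almost surely, and hence $k(t)/t\to 1/\E[\tau_1]$ almost surely and $\tau_{k(t)}/t\to 1$. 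The random walk increments $(X_{\tau_i}-X_{\tau_{i-1}})_{i\ge1}$ are i.i.d.; the key point is that they have finite first moment, which follows because between consecutive regeneration times the walk makes at most $\cU_a(\tau_1)$ attempted jumps, and $\cU_a(\tau_1)$ has exponential tails by Lemma~\ref{cor:exptailsofutauone}, so $\|X_{\tau_1}\|\le \cU_a(\tau_1)$ is integrable. Applying the strong law of large numbers to the i.i.d.\ sequence $X_{\tau_i}$ gives $X_{\tau_n}/n \to \E_\lambda[X_{\tau_1}]$ a.s., and combining with $k(t)/t\to 1/\E[\tau_1]$ yields $X_{\tau_{k(t)}}/t \to \E_\lambda[X_{\tau_1}]/\E[\tau_1]$ a.s.

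To pass from $X_{\tau_{k(t)}}$ to $X_t$ one controls the overshoot $\|X_t - X_{\tau_{k(t)}}\|$, which is at most the number of attempted jumps in the regeneration block straddling $t$, i.e.\ bounded by $\max_{i\le k(t)+1}\cU_a(\tau_i-\tau_{i-1})$. Since the latter is the maximum of $O(t)$ i.i.d.\ random variables with exponential tails, it is $o(t)$ almost surely (a Borel--Cantelli argument along integer times plus monotonicity), so $\|X_t - X_{\tau_{k(t)}}\|/t\to 0$ a.s. This proves \eqref{eq:SpeedViaTau}, and that the limit is a multiple of $\eone$: by the symmetry of the dynamics under reflections in the coordinates $\textup{e}_2,\dots,\textup{e}_d$ (the regeneration structure and the law of $\tau_1$ are unaffected, and $X^j_{\tau_1}$ for $j\ge 2$ is symmetric), $\E_\lambda[X^j_{\tau_1}]=0$ for $j\ge2$, so the vector limit has the stated form with $v(\lambda)=\E_\lambda[X^1_{\tau_1}]/\E[\tau_1]$.

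For \eqref{eq:SpeedViaTau2} I would argue uniform integrability of the family $(X^1_t/t)_{t\ge1}$ and then invoke Vitali's convergence theorem together with the a.s.\ convergence already established. The bound $|X^1_t|\le \cU_a(t)$ shows $|X^1_t/t|\le \cU_a(t)/t$, and $\cU_a(t)$ is Poisson$(t)$; a Poisson random variable normalised by its mean is uniformly integrable over $t\ge 1$ (e.g.\ because $\E[(\cU_a(t)/t)^2] = 1 + 1/t \le 2$, so the family is bounded in $L^2$ and hence uniformly integrable). Therefore $\E_\lambda[X^1_t/t]\to \E_\lambda[X^1_{\tau_1}]/\E[\tau_1]$.

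The main obstacle is the integrability/uniform-integrability bookkeeping rather than anything conceptual: one must be a little careful that the overshoot term and the last (incomplete) regeneration block are genuinely negligible both almost surely and in $L^1$, and this is exactly where Lemma~\ref{cor:exptailsofutauone} and the exponential tails in Lemma~\ref{lem:ExpoTailsTau} do the work. Everything else is the classical renewal-reward/strong-law scheme, carried out as in \cite[Theorem~3.1]{PSS:DynamicalPercolation}.
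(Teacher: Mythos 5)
Your proposal is correct and follows essentially the same route as the paper: the strong law along the i.i.d.\ regeneration increments (integrable since $\|X_{\tau_1}\|_1\le \cU_a(\tau_1)$, Lemma~\ref{cor:exptailsofutauone}), control of the block straddling $t$, and uniform integrability of $X^1_t/t$ via the Poisson bound $|X^1_t|\le\cU_a(t)$ and boundedness in $L^2$. The only differences are cosmetic: you interpolate via the random renewal index $\max\{i:\tau_i\le t\}$ with a Borel--Cantelli bound on the maximal block size, whereas the paper uses the deterministic index $\lfloor t/\E[\tau_1]\rfloor$ and a three-term decomposition, and you make explicit the reflection-symmetry argument for $\E_\lambda[X^j_{\tau_1}]=0$, $j\ge 2$, which the paper leaves implicit.
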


\begin{proof}[\bf Proof]

We first show that there exists a positive constant $C$ (depending on $\mu$) so that for all $\lambda>0$, 
\begin{align}\label{eq:boundonnorm1}
\estart{\norm{X_{\tau_1}}_1}{\lambda}\leq C.
\end{align}
Recall that for every $t\geq 0$ we write $\cU_a(t)$ for the total number of times that a copy of an edge was added to the infected set during the time interval $[0,t]$. Then 
\[
\norm{X_{\tau_1}}_1\leq \cU_a(\tau_1).
\]
By Lemma~\ref{cor:exptailsofutauone} we get that $\cU_a(\tau_1)$ has exponential tails, and hence this proves~\eqref{eq:boundonnorm1}.

Since the increments $(X_{\tau_i}-X_{\tau_{i-1}})$ are i.i.d.\ and~\eqref{eq:boundonnorm1} holds, we can apply the strong law of large numbers to obtain that almost surely
\begin{equation}\label{eq:LLNAlongTau}
\lim_{k \rightarrow \infty} \frac{X_{\tau_k}}{k} = \lim_{k \rightarrow \infty} \frac{1}{k} \sum_{i=1}^{k} (X_{\tau_i}-X_{\tau_{i-1}}) = \E_{\lambda}[X_{\tau_1}] \, .
\end{equation} 
To prove a law of large numbers for $(X_t)_{t\geq 0}$, for every $t\geq 0$ writing $k=k(t) = \lfloor t/\estart{\tau_1}{}\rfloor$ we get 
\begin{align*}
	\frac{X_t}{t} = \frac{X_t- X_{k\estart{\tau_1}{}}}{t} + \frac{X_{k\estart{\tau_1}{}} - X_{\tau_k}}{t} + \frac{X_{\tau_k}}{t}.
\end{align*}
It now follows that the first fraction on the right hand side above converges to $0$ as $t\to\infty$ almost surely. 
Using that $\tau_k/k\to \estart{\tau_1}{}$ as $k\to\infty$ almost surely, it follows easily that the second fraction on the right hand side above converges to $0$ almost surely. Finally, using~\eqref{eq:LLNAlongTau} we get that the third fraction converges to $\estart{X_{\tau_1}}{\lambda}/\estart{\tau_1}{}$ almost surely as $t\to\infty$ and this concludes the proof of the almost-sure convergence.
To show \eqref{eq:SpeedViaTau2}, note that $\norm{X_t/ t}_1 \leq U_a(t)/t$ and $(U_a(t)/t)_{t \geq 0}$ is bounded in $L^2(\P_{\lambda})$, implying that $(\norm{X_t/ t}_1)_{t \geq 0}$ is bounded in $L^2(\P_{\lambda})$ and in particular uniformly integrable.
\end{proof}

At this point, let us state some consequences of Proposition \ref{pro:RegenerationSpeed}. 

The next proposition follows in exactly the same way as the proof of Theorem~3.1 in~\cite{PSS:DynamicalPercolation} when $\lambda=0$. The only difference from~\cite{PSS:DynamicalPercolation} is the definition of the regeneration times, but the way they are used for the invariance principle is the same as in~\cite{PSS:DynamicalPercolation}.
 We give a sketch of the proof, as we need the expression for the diffusivity of the Brownian motion in order to establish the Einstein relation in Lemma~\ref{lem:Differentiability}.

\begin{proposition}\label{pro:Invariance} The first component $(X^1_t)_{t\geq 0}$ of the biased random walk $(X_t)_{t\geq 0}=(X_t^1,\ldots,X_t^d)_{t\geq 0}$ on dynamical percolation satisfies an invariance principle
\begin{equation}\label{eq:Invariance}
\left( \frac{X^1_{kt}-v(\lambda) kt}{\sqrt{k}} \right)_{t \in [0,1]} \overset{(d)}{\rightarrow}(\sigma B_t)_{t \in [0,1]}
\end{equation} in $D[0,1]$ as $k \rightarrow \infty$, where $(B_t)_{t \geq 0}$ denotes a standard Brownian motion and $\sigma^2=\sigma^2(d,\mu,p,\lambda)= \Var_\lambda(X^1_{\tau_1})(\E[\tau_1])^{-1}$. 
\end{proposition}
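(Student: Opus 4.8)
The plan is to run the renewal/Donsker argument of~\cite[Theorem~3.1]{PSS:DynamicalPercolation}, now applied to the \emph{centered} first-coordinate increments over regeneration blocks. For $i\ge 1$ set $Y_i := (X^1_{\tau_i}-X^1_{\tau_{i-1}}) - v(\lambda)(\tau_i-\tau_{i-1})$. By the regeneration structure of Section~\ref{sec.RegenerationTimes} the $Y_i$ are i.i.d., and by Proposition~\ref{pro:RegenerationSpeed} they have mean $\E_\lambda[X^1_{\tau_1}]-v(\lambda)\E[\tau_1]=0$. First I would record that $Y_1\in L^2(\P_\lambda)$: since $|X^1_{\tau_1}|\le \norm{X_{\tau_1}}_1\le \cU_a(\tau_1)$ has exponential tails by Lemma~\ref{cor:exptailsofutauone} and $\tau_1$ has exponential tails by Lemma~\ref{lem:ExpoTailsTau}, in fact $Y_1$ has exponential tails and all moments finite. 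Donsker's functional CLT then gives, with $S_n:=\sum_{i=1}^n Y_i$, that $(k^{-1/2}S_{\lfloor ks\rfloor})_{s\in[0,1]}\to(\sqrt{\Var_\lambda(Y_1)}\,B_s)_{s\in[0,1]}$ in $D[0,1]$.

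Next I would pass from the regeneration grid to continuous time. Let $N_t$ count the regeneration times in $[0,t]$. One has the exact identity $X^1_{\tau_n}-v(\lambda)\tau_n=S_n$, so for arbitrary $t$
\[
\big|X^1_t-v(\lambda)t-S_{N_t}\big|\ \le\ \big(\cU_a(\tau_{N_t+1})-\cU_a(\tau_{N_t})\big)+v(\lambda)(\tau_{N_t+1}-\tau_{N_t}),
\]
which is a single-block quantity; since $[0,k]$ contains $O(k)$ blocks and the per-block terms have exponential tails, a maximal inequality together with Borel--Cantelli show this remainder is $o(\sqrt k)$ uniformly, a.s. For $S_{N_{k\cdot}}$, the SLLN for the i.i.d.\ increments $\tau_i-\tau_{i-1}$ (Lemma~\ref{lem:ExpoTailsTau}) gives $N_t/t\to 1/\E[\tau_1]$ a.s., and since $t\mapsto N_{kt}/k$ is nondecreasing with continuous limit $t\mapsto t/\E[\tau_1]$, a Dini-type argument upgrades this to a.s.\ uniform convergence on $[0,1]$. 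As the limit is deterministic, the pair $(k^{-1/2}S_{\lfloor k\cdot\rfloor},\,N_{k\cdot}/k)$ converges jointly to $(\sqrt{\Var_\lambda(Y_1)}\,B_\cdot,\,\cdot/\E[\tau_1])$, and since composition in the Skorokhod space is continuous at pairs whose inner function is continuous, the continuous-mapping theorem yields
\[
\frac{S_{N_{kt}}}{\sqrt k}\ \longrightarrow\ \sqrt{\Var_\lambda(Y_1)}\,B_{t/\E[\tau_1]}\ \overset{(d)}{=}\ \sqrt{\frac{\Var_\lambda(Y_1)}{\E[\tau_1]}}\,B_t
\]
by Brownian scaling. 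Together with the negligibility of the remainder this gives~\eqref{eq:Invariance} with $\sigma^2=\Var_\lambda\!\big(X^1_{\tau_1}-v(\lambda)\tau_1\big)(\E[\tau_1])^{-1}$; in particular, as $\lambda\to0$ this tends to $\Var_0(X^1_{\tau_1})(\E[\tau_1])^{-1}$, the value of $\sigma^2$ in Theorem~\ref{thm:pssinvariance}, which is what Lemma~\ref{lem:Differentiability} requires.

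The remaining points are routine: tightness in $D[0,1]$ is inherited from Donsker together with the uniform time change and the negligible remainder, and the maximal-block estimate is a standard consequence of exponential tails. The one genuinely delicate step — and the one that pins down the form of the diffusivity — is the time change: one must center the block increments by $v(\lambda)(\tau_i-\tau_{i-1})$ rather than work with $X^1_{\tau_i}-X^1_{\tau_{i-1}}$ directly, because the $\sqrt k$-scale fluctuations of the renewal counting process $N_{kt}$ are correlated with the partial sums and precisely absorb the drift term $v(\lambda)(N_{kt}\E[\tau_1]-kt)$; getting this bookkeeping right is the main thing to verify.
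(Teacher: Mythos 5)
Your proposal follows essentially the same route as the paper --- regeneration blocks from Section~\ref{sec.RegenerationTimes}, a CLT/Donsker step for the block sums, and a renewal time change --- but you carry out the centering more carefully, and this produces the one substantive point to flag: the diffusivity you obtain is not the constant stated in the proposition. Your blocks $Y_i=(X^1_{\tau_i}-X^1_{\tau_{i-1}})-v(\lambda)(\tau_i-\tau_{i-1})$ give, via the standard cumulative-process/Anscombe argument, $\sigma^2=\Var_\lambda\big(X^1_{\tau_1}-v(\lambda)\tau_1\big)/\E[\tau_1]=\big(\Var_\lambda(X^1_{\tau_1})-2v(\lambda)\Cov_\lambda(X^1_{\tau_1},\tau_1)+v(\lambda)^2\Var(\tau_1)\big)/\E[\tau_1]$, whereas the statement asserts $\sigma^2=\Var_\lambda(X^1_{\tau_1})/\E[\tau_1]$. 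These agree when $\lambda=0$ (since $v(0)=0$), but for $\lambda>0$ they differ unless an unexplained cancellation occurs. The source of the discrepancy is visible in the paper's sketch: \eqref{eq:NormalConvergence} is a CLT for $X^1_{\tau_n}-v(\lambda)\,n\,\E[\tau_1]$ normalised by $\sqrt{n\Var(X^1_{\tau_1})}$, yet it is then applied to the middle term of \eqref{eq:SplitCLT}, which is centred by $v(\lambda)\tau_{\ell(k)}$ rather than by $v(\lambda)\ell(k)\E[\tau_1]$; the difference $v(\lambda)\big(\tau_{\ell(k)}-\ell(k)\E[\tau_1]\big)$ is itself of order $\sqrt k$ and correlated with the walk increments --- exactly the bookkeeping you single out at the end. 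So your formula is the one the renewal structure actually delivers; as you note, at $\lambda=0$ it coincides with the diffusivity of Theorem~\ref{thm:pssinvariance}, which is all that is used downstream (Remark~\ref{rem:samediffusivity} and Proposition~\ref{einstein-x} for the Einstein relation), so the paper's main results are unaffected, but strictly speaking your argument proves the invariance principle with a different variance constant than the one written in the statement.

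Apart from this, your proof is sound and its ingredients match the paper's: the i.i.d.\ property and mean zero of the $Y_i$ follow from the regeneration structure and Proposition~\ref{pro:RegenerationSpeed}, square integrability from the exponential tails in Lemmas~\ref{lem:ExpoTailsTau} and~\ref{cor:exptailsofutauone}, and the single-block remainder bound with the $o(\sqrt k)$ maximal estimate is the analogue of \eqref{eq:NormalConvergence2}--\eqref{eq:NormalConvergence3}. The only genuine difference in route is organisational: you prove the functional convergence directly via Donsker, a uniform law of large numbers for $N_{kt}/k$, and continuity of composition (the correct continuity point is a pair whose outer limit path is continuous and whose inner path is continuous and nondecreasing --- a minor restatement of what you wrote), whereas the paper first reduces to the one-dimensional marginal at $t=1$ through a tightness argument borrowed from \cite{S:SlowdownRWRE}. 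Both are standard; yours has the advantage of making the time-change bookkeeping, and hence the correct form of $\sigma^2$, explicit.
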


\begin{proof}[Sketch of the proof] Since the arguments are analogous to the ones in Theorem~3.1 in \cite{PSS:DynamicalPercolation}, apart from a centering due to the presence of the bias, we will only outline the key steps of the proof.  By Lemma \ref{lem:ExpoTailsTau} and  a similar tightness argument as in Theorem~4.1 of \cite{S:SlowdownRWRE}, it suffices to consider the convergence in 
 \eqref{eq:Invariance} only for $t=1$. Since $(\tau_n)_{n \in \N}$ is a sequence of regeneration times, we have that as $n\to\infty$,
 \begin{equation}\label{eq:NormalConvergence} 
 \frac{X^1_{\tau_n}-v(\lambda) n\E[\tau_1]}{\sqrt{n \Var(X^1_{\tau_1})}} \overset{(d)}{\rightarrow} \mathcal{N}
 \end{equation} where $ \mathcal{N}$ is standard normal random variable. Next, we define
 \begin{equation}
 \ell(k) := \max\{ \ell \in \N  \colon \tau_{\ell} \leq k\}
 \end{equation} to be the index of the last regeneration time before $k$. A standard renewal argument implies that the ratio $\ell(k)/k$ converges almost surely to $\E[\tau_1]^{-1}$. We write now
 \begin{equation}\label{eq:SplitCLT}
 \frac{X^1_k-v(\lambda)k}{\sqrt{k}}=\frac{X^1_k-X^1_{\tau_{\ell(k)}}}{\sqrt{k}} + \frac{X^1_{\tau_{\ell(k)}}-v(\lambda)\tau_{\ell(k)}}{\sqrt{k}} + \frac{v(\lambda)(\tau_{\ell(k)}-k)}{\sqrt{k}} \, .
 \end{equation}
For all $0<s<t$ we write $\cU_a[s,t]$ for the number of copies of edges added to the infected set between times $s$ and $t$. Note that $\cU_a[s,t]$ is a Poisson random variable of parameter $t-s$.  Then
 \begin{equation}\label{eq:NormalConvergence2} 
|X^1_k - X^1_{\tau_{\ell(k)}} |  \leq  \cU_a[\tau_{\ell(k)},\tau_{\ell(k)+1}] \, .
\end{equation}  
Recall from Lemma~\ref{lem:ExpoTailsTau} that the regeneration times have exponential tails, and in particular finite variance. Using equation (4.9) in \cite{S:SlowdownRWRE} for the second statement, we see 
\begin{equation}\label{eq:NormalConvergence3} 
\frac{k-\tau_{\ell(k)}}{\sqrt{k}} \overset{(d)}{\rightarrow} 0 \quad \text{ and } \quad  \frac{\tau_{\ell(k)+1}-\tau_{\ell(k)}}{\sqrt{k}} \overset{(d)}{\rightarrow} 0 \quad \text{ for } k\rightarrow \infty . 
\end{equation} 
Hence, we get from \eqref{eq:NormalConvergence2} and \eqref{eq:NormalConvergence3} that the first and third terms on the right-hand side of \eqref{eq:SplitCLT} converge to $0$ in probability, and by using \eqref{eq:NormalConvergence} for the second term we conclude.
\end{proof}

\begin{remark}\label{rem:samediffusivity}
	\rm{
	Since the above proof works for all $\lambda\geq 0$, it follows that the diffusivity $\sigma^2$ for $\lambda=0$ is the same as in Theorem~\ref{thm:pssinvariance}. Moreover, note that we only prove an annealed invariance principle in Proposition \ref{pro:Invariance}, but conjecture that also a quenched invariance principle holds; see \cite{BZ:InvarianceQuenched} for sufficiently large values of $\mu>0$.
	}
\end{remark}

Recall from Proposition \ref{pro:RegenerationSpeed} that the speed $v(\lambda)$ is equal to $\E_{\lambda}[X^1_{\tau_1}]/\estart{\tau_1}{}$ and $\estart{\tau_1}{}$ does not depend on $\lambda$.  
For every $t\geq 0$ we let $R(t)$, respectively $L(t)$, be the number of steps to the right, respectively to the left, that $X^1$ performed by time $t$ and $\cU(t)$ be the
number of steps that were carried out by the walker until time $t$.
Let $R_a(t)$, respectively $L_a(t)$, be the number of attempted jumps in the $\eone$ direction, respectively in the $-\eone$ direction, of $X^1$ until time $t$ and recall that $\cU_a(t)$ is the total number of attempted jumps of $X$ until time $t$. Note that $\cU_a(t) -\cU(t)$ is the number of jumps that were attempted but not carried out. Writing $R = R(\tau_1), L= L(\tau_1)$ and 
$R_a = R_a(\tau_1), L_a = L_a(\tau_1)$ we will show the following.

\begin{lemma}\label{lem:uptotau} 
We have
\begin{equation}
\E_{\lambda}[X_{\tau_1}^1] = \E_{0}\left[(R-L ) e^{\lambda (R_a - L_a) }\left(\frac{2d}{Z_\lambda}\right)^{\cU_a(\tau_1)}\right]\, .
\end{equation}
\end{lemma}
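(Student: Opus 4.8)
The plan is to express $\E_\lambda[X^1_{\tau_1}]$ as an average over the evolution of the walk and environment up to the first regeneration time, and then perform a change of measure from the $\lambda$-biased walk to the unbiased walk. The key observation is that the regeneration times $(\tau_i)$ are defined purely in terms of the infected set $(I_t)_{t\geq0}$, whose law depends only on $\mu$; in particular, $\tau_1$ is a measurable function of the sequence of examined edges and the refresh clocks, not of which direction the walker actually chooses at each attempted jump. So I would first set up a common probability space on which the walk is driven by: (i) the rate-$1$ Poisson clock of attempted jumps, (ii) an i.i.d.\ sequence of ``direction choices'', and (iii) the environment/infection dynamics. Under $\P_\lambda$ the $n$-th direction choice is distributed according to $p(x,\cdot)$ with the $\lambda$-bias, i.e.\ $\eone$ with probability $e^\lambda/Z_\lambda$, $-\eone$ with probability $e^{-\lambda}/Z_\lambda$, and each $\pm \textup{e}_i$ ($i\geq2$) with probability $1/Z_\lambda$; under $\P_0$ each of the $2d$ choices has probability $1/(2d)$.

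Next I would write the Radon--Nikodym derivative of $\P_\lambda$ with respect to $\P_0$, restricted to the information used up to time $\tau_1$. Since $\tau_1$ is determined by the infection dynamics alone, and the infection dynamics together with the examined-edge sequence are distributed identically under $\P_\lambda$ and $\P_0$, the only change of measure comes from the $\cU_a(\tau_1)$ direction choices made strictly before $\tau_1$. Each attempted jump in direction $\eone$ contributes a factor $\frac{e^\lambda/Z_\lambda}{1/(2d)} = \frac{2d\, e^\lambda}{Z_\lambda}$, each attempted jump in direction $-\eone$ contributes $\frac{2d\, e^{-\lambda}}{Z_\lambda}$, and each attempted jump in a transverse direction $\pm\textup{e}_i$ contributes $\frac{2d}{Z_\lambda}$. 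Collecting these over all $\cU_a(\tau_1)$ attempted jumps gives exactly
\[
\frac{\dif \P_\lambda}{\dif \P_0}\Big|_{\mathcal{F}_{\tau_1}} = e^{\lambda(R_a - L_a)}\left(\frac{2d}{Z_\lambda}\right)^{\cU_a(\tau_1)},
\]
where $R_a = R_a(\tau_1)$ and $L_a = L_a(\tau_1)$ count the attempted jumps in the $+\eone$ and $-\eone$ directions respectively (note $R_a + L_a \leq \cU_a(\tau_1)$, the difference being the transverse attempts). Then, since $X^1_{\tau_1} = R - L$ where $R = R(\tau_1)$, $L = L(\tau_1)$ count \emph{carried-out} steps (an attempted $\eone$-step is carried out only if the examined edge is open), I would conclude
\[
\E_\lambda[X^1_{\tau_1}] = \E_0\!\left[(R-L)\,\frac{\dif \P_\lambda}{\dif \P_0}\Big|_{\mathcal{F}_{\tau_1}}\right] = \E_0\!\left[(R-L)\, e^{\lambda(R_a - L_a)}\left(\frac{2d}{Z_\lambda}\right)^{\cU_a(\tau_1)}\right].
\]

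\textbf{Main obstacle.} The delicate point is making precise that $\tau_1$ and the examined-edge sequence are jointly independent of the direction choices in the right way, so that the change of measure really is a product over attempted jumps and does not secretly depend on $\lambda$ through the stopping time. This requires being careful about the coupling construction from Section~\ref{sec.RegenerationTimes}: the infected set $I_t$, and hence $\tau_1$, updates exactly when the walker ``examines'' an edge, and which edge is examined depends on the walker's current position, which \emph{does} depend on past direction choices and on the environment. The resolution is that once we condition on the full realization of the examined-edge sequence (equivalently, of the walk path) together with the infection clocks, $\tau_1$ is deterministic, and the likelihood ratio of the path under $\P_\lambda$ versus $\P_0$ factorizes over the attempted jumps as above; one then integrates out. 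I would phrase this cleanly by first establishing the identity for the walk killed at a deterministic time $t$ (where the factorization of the path likelihood is transparent via the Markov property of attempted jumps), and then passing to the stopping time $\tau_1$ using that $\tau_1$ is a stopping time with exponential tails (Lemma~\ref{lem:ExpoTailsTau}) together with the bound $|X^1_{\tau_1}| \leq \cU_a(\tau_1)$ and the integrability from Lemma~\ref{cor:exptailsofutauone}, which justify the optional-stopping/dominated-convergence step.
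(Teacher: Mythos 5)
Your proposal is correct and follows essentially the same route as the paper: the Radon--Nikodym derivative of $\P_\lambda$ with respect to $\P_0$ on $\mathcal{F}_t$ is identified as the product $e^{\lambda(R_a(t)-L_a(t))}(2d/Z_\lambda)^{\cU_a(t)}$ over the direction choices (all other randomness having identical law under both measures), and one then passes to the stopping time $\tau_1$. The paper phrases this last step via uniform integrability of the stopped martingale $(M_{t\wedge\tau_1})$, which follows from $\tau_1<\infty$ a.s.; your deterministic-time-plus-dominated-convergence argument using Lemmas~\ref{lem:ExpoTailsTau} and~\ref{cor:exptailsofutauone} is just a minor variant of the same optional-stopping step.
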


\begin{proof}[\bf Proof]
Define the $\sigma$-field
${\cal F}_t$ generated by the evolution of the percolation, the exponential waiting times of the particle, the decisions of the particle up to time $t$ and the Poisson process $(N_t)_{t \geq 0}$ used in the definition of the regeneration times.  By saying "evolution of the percolation" we include the times of updates of $\eta = (\eta_s)_{s \geq 0}$ even if the state of the edge does not change.  
Consider the Radon-Nikodym derivative $\frac{d \mathbb{P}_\lambda}{d \mathbb{P}_0}\big|_{{\cal F}_{t}}$
of $\mathbb{P}_\lambda$ with respect to $\mathbb{P}_0$ on ${\cal F}_{t}$.
We then have
\begin{align}\begin{split}\label{prodRN1}
&M_t : = \left. \frac{d \mathbb{P}_\lambda}{d \mathbb{P}_0}\right|_{{\cal F}_{t}} 
= e^{\lambda (R_a(t) -L_a(t)) }
\left( \frac{2d}{Z_\lambda}\right)^{\cU_a(t)}.
\end{split}
\end{align}
Indeed, conditional on the evolution of the percolation, since it is the same for both measures, for the 
$\lambda$-biased walk, each attempted jump to the right (resp.\ to the left) has probability $e^\lambda/Z_\lambda$ (resp.\ $e^{-\lambda}/Z_\lambda$), while each other direction has probability $1/Z_\lambda$. For the unbiased walk the respective probabilities are all equal to $1/(2d)$. 
Note that $\tau_1$ is a stopping time with respect to the filtration $({\cal F}_t)_{t \geq 0}$, and that $(M_{t \wedge \tau_1})_{t \geq 0}$ is a martingale with respect to $({\cal F}_t)_{t \geq 0}$.
Since $\mathbb{P}_\lambda$ is absolutely continuous with respect to $\mathbb{P}_0$ on ${\cal F}_{\tau_1}$ (this follows simply from the fact that $\tau_1< \infty$ almost surely),
$(M_{t \wedge \tau_1})_{t \geq 0}$ is a uniformly integrable martingale.
Hence, 
\begin{align}\begin{split}\label{prodRNtau}
&M_{\tau_1} = \left. \frac{d \mathbb{P}_\lambda}{d \mathbb{P}_0}\right|_{{\cal F}_{\tau_1}} 
= e^{\lambda (R_a -L_a) }
\left( \frac{2d}{Z_\lambda}\right)^{\cU_a(\tau_1)}, 
\end{split}
\end{align}
i.e. $M_{\tau_1}$ is the Radon-Nikodym derivative of 
$\mathbb{P}_\lambda$ with respect to $\mathbb{P}_0$ on ${\cal F}_{\tau_1}$.
Hence
\begin{align}
\E_{\lambda}[X_{\tau_1}^1] = \E_{0}\left[X_{\tau_1}^1 e^{\lambda (R_a - L_a) }\left(\frac{2d}{Z_\lambda}\right)^{\cU_a(\tau_1)}\right] = \E_{0}\left[(R-L ) e^{\lambda (R_a - L_a) }\left(\frac{2d}{Z_\lambda}\right)^{\cU_a(\tau_1)}\right]\, ,
\end{align}
which gives the result.
\end{proof}
Let 
\begin{equation}\label{eq:ProbabilityUnbiased}
p_0(k_a, \ell_a, k, \ell, m) : = \P_{0}\left((R_a,L_a)=(k_a,\ell_a), (R,L)=(k,\ell), \cU_a(\tau_1) = m \right) , 
\end{equation}
  where $\P_{0}$ denotes the law of the symmetric simple random walk on dynamical percolation. Then 
\begin{align} \label{eq:SumRepresenationfAlpha}
f(\lambda) &:=\E_{\lambda}[X_{\tau_1}^1]  =\estart{X_{\tau_1}^1 
e^{\lambda (R_a-L_a) }\left( \frac{2d}{Z_\lambda}\right)^{\cU_a(\tau_1)}}{0} \nonumber \\
&=\sum_{m\in \N} \sum_{\substack{k_a+\ell_a \leq  m \\  k\leq k_a, \ell \leq \ell_a}} (k-\ell)e^{\lambda (k_a-\ell_a)} \left(\frac{2d}{Z_\lambda} \right)^m 
p_0(k_a, \ell_a, k, \ell, m) \, .
\end{align}

\begin{lemma}\label{lem:Differentiability} Let $\mu>0$ and $p\in (0,1)$. Then the speed $v(\lambda)$ is continuously differentiable in 
$\lambda>0$ and the derivative satisfies 
\begin{equation}\label{formula-deriv-speed}
v'(\lambda) = \frac{1}{\Ex{\tau_1}}\cdot \left( \estart{X_{\tau_1}^1 \left(R_a - L_a\right) }{\lambda}  - \frac{e^\lambda - e^{-\lambda}}{Z_\lambda}\cdot 
\estart{X_{\tau_1}^1\cdot  \cU_a(\tau_1)}{\lambda} \right).
\end{equation}
\end{lemma}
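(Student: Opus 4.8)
The plan is to differentiate the series representation for $f(\lambda) = \E_\lambda[X^1_{\tau_1}]$ given in~\eqref{eq:SumRepresenationfAlpha} term by term, and then divide by the $\lambda$-independent constant $\E[\tau_1]$ using Proposition~\ref{pro:RegenerationSpeed}. Each summand is $(k-\ell)e^{\lambda(k_a-\ell_a)}(2d/Z_\lambda)^m \, p_0(k_a,\ell_a,k,\ell,m)$, which is a smooth function of $\lambda>0$ (recall $Z_\lambda = e^\lambda + e^{-\lambda} + 2d-2 > 0$), so the only real issue is justifying the interchange of differentiation and summation. The derivative of a single term is
\begin{equation}\label{eq:termderiv}
(k-\ell)e^{\lambda(k_a-\ell_a)}\left(\frac{2d}{Z_\lambda}\right)^m \left( (k_a - \ell_a) - m\,\frac{Z_\lambda'}{Z_\lambda} \right) p_0(k_a,\ell_a,k,\ell,m),
\end{equation}
where $Z_\lambda' = e^\lambda - e^{-\lambda}$. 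Summing~\eqref{eq:termderiv} over all indices and recognising the two resulting sums as expectations under $\P_\lambda$ (reversing the Radon--Nikodym change of measure from Lemma~\ref{lem:uptotau}), namely $\E_\lambda[X^1_{\tau_1}(R_a - L_a)]$ and $\frac{e^\lambda - e^{-\lambda}}{Z_\lambda}\E_\lambda[X^1_{\tau_1}\cU_a(\tau_1)]$, yields exactly~\eqref{formula-deriv-speed}. Continuity of $v'$ then follows since each term in~\eqref{eq:termderiv} is continuous in $\lambda$ and (by the same domination argument below) the series converges locally uniformly.

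The key step is therefore a domination bound allowing both term-by-term differentiation and local uniform convergence. Fix a compact interval $[\lambda_1,\lambda_2] \subset (0,\infty)$. On the event in the definition of $p_0$ we have $|k-\ell| \le k + \ell \le k_a + \ell_a \le m$, $|k_a - \ell_a| \le m$, and $e^{\lambda(k_a-\ell_a)} \le e^{\lambda_2 m}$ uniformly on the interval; also $Z_\lambda \ge Z_{\lambda_2}$ is bounded below and $Z_\lambda' / Z_\lambda$ is bounded on $[\lambda_1,\lambda_2]$, while $2d/Z_\lambda \le 2d/Z_{\lambda_2}$. Hence each term in~\eqref{eq:termderiv} is bounded in absolute value by $C\, m^2 (2d\,e^{\lambda_2}/Z_{\lambda_2})^m\, p_0(k_a,\ell_a,k,\ell,m)$ for a constant $C = C(\lambda_1,\lambda_2,d)$. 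Summing over the finitely many index tuples with $\cU_a(\tau_1) = m$ and using $\sum p_0(\cdot) = \P_0(\cU_a(\tau_1) = m)$, the total is bounded by $C\sum_{m} m^2 (2d\,e^{\lambda_2}/Z_{\lambda_2})^m \P_0(\cU_a(\tau_1) = m)$. By Lemma~\ref{cor:exptailsofutauone} this last series converges provided $c_\mu$ is large enough — but $c_\mu$ depends only on $\mu$, not on $\lambda$, and we may need $c_\mu > \log(2d\,e^{\lambda_2}/Z_{\lambda_2})$.

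The main obstacle is precisely this last point: the exponential tail bound $\pr{\cU_a(\tau_1) \ge m} \le e^{-c_\mu m}$ of Lemma~\ref{cor:exptailsofutauone} need not have $c_\mu$ exceeding $\log(2d\,e^{\lambda_2}/Z_{\lambda_2})$ for small $\mu$, so I cannot invoke it naively on all of $(0,\infty)$. The resolution is to work instead directly under $\P_\lambda$: the quantities $\E_\lambda[X^1_{\tau_1}(R_a-L_a)]$ and $\E_\lambda[X^1_{\tau_1}\cU_a(\tau_1)]$ appearing in~\eqref{formula-deriv-speed} are finite because $|X^1_{\tau_1}|, R_a, L_a, \cU_a(\tau_1)$ are all at most $\cU_a(\tau_1)$, which has exponential tails under $\P_\lambda$ (Lemma~\ref{cor:exptailsofutauone} again, now the statement as given does hold under $\P_\lambda$ — indeed its proof shows $\cU_a(\tau_1)$ has exponential tails for every fixed $\mu$, with the $c_\mu\to\infty$ refinement being an extra feature we do not need here). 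To make the term-by-term differentiation rigorous one applies the mean value theorem to the difference quotient $(f(\lambda+h) - f(\lambda))/h$: the increment of a single term equals~\eqref{eq:termderiv} evaluated at some intermediate point $\xi \in (\lambda, \lambda+h)$, and for $h$ small enough that $[\lambda,\lambda+h] \subset [\lambda_1,\lambda_2]$ this is dominated as above by the summable majorant, so dominated convergence gives $f'(\lambda) = \sum (\text{term derivatives})$, which I then repackage as the two $\P_\lambda$-expectations. Dividing by $\E[\tau_1] = e^{1/\mu}$ completes the proof, and continuity of $v'$ follows from continuity of each term together with the locally uniform majorant.
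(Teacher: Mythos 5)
There is a genuine gap, and it sits exactly at the point you flagged yourself. Your justification of the term-by-term differentiation rests on the majorant $C\,m^2\bigl(2d\,e^{\lambda_2}/Z_{\lambda_2}\bigr)^m p_0(k_a,\ell_a,k,\ell,m)$, whose sum is $C\sum_m m^2 (2d\,e^{\lambda_2}/Z_{\lambda_2})^m\,\prstart{\cU_a(\tau_1)=m}{0}$; since $2d\,e^{\lambda_2}/Z_{\lambda_2}>1$ (it tends to $2d$ as $\lambda_2\to\infty$), this requires the exponential tail rate of $\cU_a(\tau_1)$ to exceed $\log\bigl(2d\,e^{\lambda_2}/Z_{\lambda_2}\bigr)$, which Lemma~\ref{cor:exptailsofutauone} does not guarantee for small $\mu$ (the rate degenerates as $\mu\to0$). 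Your proposed ``resolution'' does not repair this: showing that the limiting quantities $\estart{X^1_{\tau_1}(R_a-L_a)}{\lambda}$ and $\estart{X^1_{\tau_1}\cU_a(\tau_1)}{\lambda}$ are finite under $\P_\lambda$ says nothing about the interchange of differentiation and summation, and when you then justify the interchange via the mean value theorem you write that the intermediate-point terms are ``dominated as above by the summable majorant'' --- i.e.\ you invoke precisely the majorant you had just conceded may fail to be summable. As written, the argument is circular for small $\mu$, and the continuity claim for $v'$ inherits the same problem.

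The repair is to keep the Radon--Nikodym weight at the base point $\lambda$ inside the bound rather than crushing it by a constant raised to the power $m$. This is what the paper does: it factors each increment as the term at $\lambda$ times a ratio $g(\delta)=\bigl(e^{\delta(k_a-\ell_a)}Z_\lambda^m - Z_{\lambda+\delta}^m\bigr)/\bigl(\delta Z_{\lambda+\delta}^m\bigr)$ and shows $|g(\delta)|\le Cm$ uniformly for small $\delta$, so the dominating series is
\begin{equation*}
\sum_{m}\sum_{k_a,\ell_a,k,\ell} C m^2\, e^{\lambda(k_a-\ell_a)}\Bigl(\frac{2d}{Z_\lambda}\Bigr)^m p_0(k_a,\ell_a,k,\ell,m)
= C\,\estart{(\cU_a(\tau_1))^2}{\lambda}<\infty ,
\end{equation*}
finite for \emph{every} $\mu>0$ because the weight reassembles into $\P_\lambda$-probabilities (reversing Lemma~\ref{lem:uptotau}) and the law of $\cU_a(\tau_1)$ is $\lambda$-independent with exponential tails --- no comparison between $c_\mu$ and $\log(2d\,e^{\lambda_2}/Z_{\lambda_2})$ is ever needed. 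If you prefer your mean value theorem route, the same idea saves it: for $\xi\in[\lambda,\lambda+h_0]$ bound $e^{\xi(k_a-\ell_a)}(2d/Z_\xi)^m \le e^{h_0 m}\, e^{\lambda(k_a-\ell_a)}(2d/Z_\lambda)^m$ (using that $Z$ is increasing), so the majorant becomes $C\,\estart{(\cU_a(\tau_1))^2 e^{h_0\cU_a(\tau_1)}}{\lambda}$, which is finite once $h_0$ is chosen smaller than the tail rate of $\cU_a(\tau_1)$; this extra step of re-expressing the majorant as a $\P_\lambda$-expectation and choosing $h_0$ accordingly is exactly what is missing from your write-up. The remaining steps of your proposal (term derivative, repackaging as the two $\P_\lambda$-expectations, dividing by $\Ex{\tau_1}$, continuity of $v'$) then go through as in the paper.
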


\begin{proof}[\bf Proof] 
We first prove that for all $\lambda>0$,
\begin{align}
\begin{split}\label{eq:derivativeoff}
\lim_{\delta\to 0} &\frac{f(\lambda+\delta) - f(\lambda)}{\delta}	\\&= \sum_{m\in \N} \sum_{\substack{k_a+\ell_a \leq  m \\  k\leq k_a, \ell \leq \ell_a}} (k-\ell)e^{\lambda (k_a-\ell_a)} \left(\frac{2d}{Z_\lambda} \right)^m 
\left( k_a-\ell_a -m\cdot \frac{Z_\lambda'}{Z_\lambda} \right)p_0(k_a, \ell_a, k, \ell, m),
\end{split}
\end{align}
where $Z_\lambda':=e^\lambda - e^{-\lambda}$. 
The last term equals
$$
\estart{X_{\tau_1}^1 (R_a - L_a) }{\lambda}  - \frac{e^\lambda - e^{-\lambda}}{Z_\lambda}\cdot 
\estart{X_{\tau_1}^1\cdot  \cU_a(\tau_1)}{\lambda} \, .
$$

Note that the sum appearing above divided by $\estart{\tau_1}{}$ is equal to the expression for the derivative given in the statement of the lemma.

A direct calculation shows that
\begin{align*}
\frac{f(\lambda+\delta) - f(\lambda)}{\delta}= \sum_{m\in \N} \sum_{\substack{k_a+\ell_a \leq  m \\  k\leq k_a, \ell \leq \ell_a}} (k-\ell)e^{\lambda (k_a-\ell_a)} \left(\frac{2d}{Z_\lambda} \right)^m  g(\delta)\cdot p_0(k_a, \ell_a, k, \ell, m) ,
\end{align*}
where the function $g$ is defined via 
\[
g(\delta) = \frac{e^{\delta(k_a-\ell_a)}Z_\lambda^m - Z_{\lambda+\delta}^m}{\delta Z_{\lambda+\delta}^m}.
\]
There exists a positive constant $c=c_d$ so that for all $\lambda$ and $\delta$ we have 
\begin{align}\label{eq:eqforratioofzlambda}
1\geq \frac{Z_\lambda}{Z_{\lambda+\delta}}\geq 1- c\delta. 
\end{align}
By taking $\delta<1/c$, and considering whether $\delta<1/m$ or $\delta\geq 1/m$, we see that there is a positive constant $C=C_d$ so that
\[
|g(\delta)|\leq Cm.
\]
Indeed, the upper bound on $g(\delta)$ follows by a Taylor expansion, while the lower bound follows using~\eqref{eq:eqforratioofzlambda}.
Therefore, we obtain that  uniformly for $\delta <1/c$,
\begin{align*}
\left|\frac{f(\lambda+\delta) - f(\lambda)}{\delta} \right| &\leq \sum_{m\in \N} \sum_{\substack{k_a+\ell_a \leq  m \\  k\leq k_a, \ell \leq \ell_a}} Cm^2 e^{\lambda (k_a-\ell_a)} \left(\frac{2d}{Z_\lambda} \right)^m 
p_0(k_a, \ell_a, k, \ell, m)
\\
&=C\cdot \estart{(\cU_a(\tau_1))^2}{\lambda}<\infty,
\end{align*}
where for the last bound we used Lemma~\ref{cor:exptailsofutauone}, since 
the distribution of $\cU_a(\tau_1)$ does not depend on~$\lambda$. We can thus apply  the dominated convergence theorem which allows us to  differentiate the summands in \eqref{eq:SumRepresenationfAlpha} with respect to $\lambda$ to get~\eqref{eq:derivativeoff}. Applying the dominated convergence theorem again we see that all the terms appearing in the expression for $v'(\lambda)$ are continuous functions in $\lambda$, and this finishes the proof of \eqref{formula-deriv-speed}.  \end{proof}

\begin{proposition}\label{einstein-x}
We have
\begin{equation*}
\lim_{\lambda\to 0} v'(\lambda)= \sigma^2,
\end{equation*}
where $\sigma^2 = \estart{(X_{\tau_1}^1)^2}{0}/\Ex{\tau_1} $ is the variance from Theorem~\ref{thm:pssinvariance}, see \eqref{einstein-a}.
\end{proposition}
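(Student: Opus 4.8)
The plan is to read off $\lim_{\lambda\downarrow 0}v'(\lambda)$ from the formula \eqref{formula-deriv-speed} of Lemma~\ref{lem:Differentiability} and to identify it with $\sigma^2$. As $\lambda\to 0$ we have $(e^\lambda-e^{-\lambda})/Z_\lambda\to 0$, the numerator tending to $0$ and $Z_\lambda\to 2d$. Using the change of measure of Lemma~\ref{lem:uptotau} together with $|X_{\tau_1}^1|\le \cU_a(\tau_1)$, $|R_a-L_a|\le \cU_a(\tau_1)$ and $Z_\lambda\ge 2d$, we get for $\lambda\in[0,1]$
\[
\left|\estart{X_{\tau_1}^1\,\cU_a(\tau_1)}{\lambda}\right|
=\left|\estart{X_{\tau_1}^1\,\cU_a(\tau_1)\,e^{\lambda(R_a-L_a)}\Bigl(\tfrac{2d}{Z_\lambda}\Bigr)^{\cU_a(\tau_1)}}{0}\right|
\le \estart{\cU_a(\tau_1)^2\,e^{\lambda\cU_a(\tau_1)}}{0},
\]
which by Lemma~\ref{cor:exptailsofutauone} is finite and bounded uniformly for small $\lambda$, so the second summand of \eqref{formula-deriv-speed} vanishes in the limit. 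The same domination, now with pointwise convergence of the integrand as $\lambda\downarrow 0$, makes dominated convergence applicable to the first summand, so $\lim_{\lambda\downarrow 0}v'(\lambda)=\estart{X_{\tau_1}^1(R_a-L_a)}{0}/\Ex{\tau_1}$.

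Since $\estart{X_{\tau_1}^1}{0}=0$ by the reflection symmetry of $\P_0$, Proposition~\ref{pro:Invariance} and Remark~\ref{rem:samediffusivity} give $\sigma^2=\estart{(X_{\tau_1}^1)^2}{0}/\Ex{\tau_1}$, so the claim \eqref{einstein-a} is reduced to the identity $\estart{X_{\tau_1}^1(R_a-L_a)}{0}=\estart{(X_{\tau_1}^1)^2}{0}$. Since $X_{\tau_1}^1=R-L$ and $(R_a-L_a)-(R-L)=(R_a-R)-(L_a-L)$ equals the number $F$ of attempted jumps in direction $\eone$ that failed minus the number of attempted jumps in direction $-\eone$ that failed, up to time $\tau_1$, this is in turn equivalent to $\estart{X_{\tau_1}^1\,F}{0}=0$.

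To prove $\estart{X_{\tau_1}^1 F}{0}=0$ I would enumerate the attempted jumps $1,\dots,\cU_a(\tau_1)$: for the $k$-th one let $\zeta_k\in\{-1,0,1\}$ be the $\eone$-component of the chosen direction and $\chi_k\in\{0,1\}$ the indicator that the examined edge is open, so the $\eone$-displacement at step $k$ is $\xi_k=\zeta_k\chi_k$, and $X_{\tau_1}^1=\sum_k\xi_k$, $F=\sum_k\zeta_k(1-\chi_k)$. Expanding $\estart{X_{\tau_1}^1 F}{0}=\sum_{j,k}\estart{\xi_j\zeta_k(1-\chi_k)}{0}$ and splitting into $j=k$, $j<k$, $j>k$: the diagonal vanishes since $\chi_k(1-\chi_k)=0$. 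For $j<k$ one conditions on the $\sigma$-algebra generated by the environment evolution, all Poisson clocks and $\zeta_1,\dots,\zeta_{k-1}$ — against which $\xi_j$ is measurable, while $\zeta_k$ is a fresh uniform choice — and computes $\estart{\zeta_k(1-\chi_k)\mid\cdot}{0}=-\beta_{s_k^-}$, where $s_k$ is the time of the $k$-th attempt and $\beta_s=\tfrac1{2d}\bigl(\eta_s(\{X_s,X_s+\eone\})-\eta_s(\{X_s,X_s-\eone\})\bigr)$ is the (predictable) $\eone$-drift of the walk; summing over $j<k$ and using the compensation formula for the rate-$1$ attempt clock, the $j<k$ terms give $-\estart{\int_0^{\tau_1}\beta_s X_s^1\,ds}{0}$. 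For $j>k$ one writes $\sum_{j>k}\xi_j=X_{\tau_1}^1-X_{s_k}^1$, conditions on the post-jump state $(X_{s_k},\eta_{s_k},I_{s_k})$, and uses the Markov property and translation invariance to turn the inner sum into an expected future $\eone$-displacement $\Xi$ of the walk from that state; averaging over the $\eone$-direction of the $k$-th attempt, the compensation formula and the reflection symmetry of the $\P_0$-law of $(X_s,\eta_s,I_s)_{s\le\tau_1}$ (under which $\Xi$ is odd) bring the $j>k$ terms to $\tfrac1d\estart{\int_0^{\tau_1}\mathbf 1\{\eta_s(\{X_s,X_s+\eone\})=0\}\,\Xi\bigl(X_s,\eta_s,I_s\cup\{c\}\bigr)\,ds}{0}$, with $c$ a fresh copy of the examined edge.

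The main obstacle is then to show that these two contributions cancel, i.e.\ that
\[
\frac1d\,\estart{\int_0^{\tau_1}\mathbf 1\{\eta_s(\{X_s,X_s+\eone\})=0\}\,\Xi\bigl(X_s,\eta_s,I_s\cup\{c\}\bigr)\,ds}{0}=\estart{\int_0^{\tau_1}\beta_s X_s^1\,ds}{0}.
\]
Establishing this requires exploiting the reversibility of the environment seen from the walker — so that under $\P_0$ the environment re-centred at the walker is stationary with law $\pi_p$ — together with the structural feature of the construction that every edge examined by the walker inside a regeneration interval is refreshed before that interval closes, which decouples the failure at an examined edge from the subsequent motion in exactly the way needed. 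I expect this identity to be the technical heart of the proof, the passage to the limit in the first paragraph being routine. (Alternatively, one might hope for a slicker derivation of $\estart{X_{\tau_1}^1(R_a-L_a)}{0}=\estart{(X_{\tau_1}^1)^2}{0}$ avoiding the step-by-step decomposition altogether, but I do not see how to obtain it by martingale manipulations alone.)
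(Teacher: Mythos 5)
Your first paragraph (passing to the limit $\lambda\to 0$ in \eqref{formula-deriv-speed} and killing the second summand) is fine and matches the paper's set-up, and your reduction of the claim to the identity $\estart{X_{\tau_1}^1(R_a-L_a)}{0}=\estart{(X_{\tau_1}^1)^2}{0}$, i.e.\ to $\estart{(R-L)(R_{\rm supp}-L_{\rm supp})}{0}=0$, is exactly the paper's reduction. But from that point on your proposal has a genuine gap: the whole content of the proposition is this covariance identity, and you do not prove it. Your step-by-step decomposition (diagonal, $j<k$, $j>k$) ends with the assertion that the two resulting integral expressions cancel, which you explicitly flag as ``the technical heart'' and leave unestablished; as written it rests on an unproved claim about the environment seen from the walker (stationarity/reversibility at the random time $\tau_1$, interaction with the infected-set construction), and it is not clear the two terms even have the form you state once the dependence of $\tau_1$ on the post-$s_k$ evolution is taken into account. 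So the proposal is an incomplete proof, not a proof.

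The paper closes this gap with a different and much shorter mechanism: a time-reversal symmetry at a \emph{fixed deterministic} time $t$. Under $\P_0$, the pair $((\eta_s)_{s\le t},(X_s)_{s\le t})$ together with the times of suppressed left/right attempts has the same law as its time reversal $\tilde\eta_s=\eta_{t-s}$, $\tilde X_s=X_{t-s}-X_t$ with the reversed suppressed-attempt times. Under this map $R(t)-L(t)=X_t^1$ changes sign while $R_{\rm supp}(t)-L_{\rm supp}(t)$ is invariant, so $\estart{(R(t)-L(t))(R_{\rm supp}(t)-L_{\rm supp}(t))}{0}=0$ for every fixed $t$. The passage from fixed $t$ to the regeneration time is then done by a renewal argument (as in the proof of \eqref{eq:SpeedViaTau2}): $\frac1t\estart{(R(t)-L(t))(R_{\rm supp}(t)-L_{\rm supp}(t))}{0}\to \frac{1}{\Ex{\tau_1}}\estart{(R-L)(R_{\rm supp}-L_{\rm supp})}{0}$, forcing the $\tau_1$-expectation to vanish. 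This ``prove it at fixed $t$, then transfer by regeneration'' structure is precisely what lets one avoid the delicate stationarity-at-$\tau_1$ considerations your approach runs into; if you want to salvage your route, the cleanest fix is to abandon the direct computation at $\tau_1$ and adopt this two-step scheme.
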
 

\begin{proof}
We see, taking $\lambda \to 0$ in \eqref{formula-deriv-speed} the statement follows
 if we show that
\begin{equation}\label{cov-is-var}
\estart{X_{\tau_1}^1 \left(R_a - L_a\right) }{0} = \estart{X_{\tau_1}^2}{0}\, .
\end{equation}
Turning to the proof of \eqref{cov-is-var}, we first
note that \eqref{cov-is-var} is equivalent to
\begin{equation}\label{cov-is-var2}
\estart{(R-L) \left(R_{\rm supp} - L_{\rm supp}\right) }{0} = 0, 
\end{equation}
where we write $R_{\rm supp} = R_a - R$ and $L_{\rm supp} = L_a - L$ for the jumps to the right or left respectively that were not carried out (the subscript stands for "suppressed jumps").
Similarly as in the proof of Proposition \ref{pro:RegenerationSpeed}, one can show that, with $R_{\rm supp}(t) = R_a(t) - R(t)$, $L_{\rm supp}(t) = L_a(t) - L(t)$,
\begin{equation}\label{cov-is-var2}
\lim\limits_{t \to \infty}\frac{1}{t} \estart{(R(t)-L(t)) \left(R_{\rm supp}(t) - L_{\rm supp}(t)\right) }{0}
= \frac{1}{\E[\tau_1]} \estart{(R-L) \left(R_{\rm supp} - L_{\rm supp}\right) }{0} \, .
\end{equation}
Hence, it suffices to prove that for all $t > 0$,
\begin{equation}\label{orthogonality}
 \estart{(R(t)-L(t)) \left(R_{\rm supp}(t) - L_{\rm supp}(t)\right) }{0}= 0\, .
\end{equation}
We will provide a measure-preserving transformation such that $R(t)-L(t)$ is 
an antisymmetric function but $R_{\rm supp}(t) - L_{\rm supp}(t)$ is a symmetric function under this transformation, implying that 
\eqref{orthogonality} holds true.
Indeed, the law of the environment and the walk under $\mathbb{P}_0$  is invariant under time-reversal. More precisely, fix $t> 0$ and
let $\tilde \eta_s = \eta_{t-s}$, $0 \leq s \leq t$ and $\tilde X_s = X_{t-s}- X_t$, $0 \leq s \leq t$. 
Consider all the times and decisions of the particle in the path $(X_s)_{0 \leq s \leq t}$ concerning jumps to the right or to the left that were suppressed. Call these times
$r_1, r_2, \ldots ,r_{R_{\rm supp}(t)}$ (for suppressed jumps to the right) and $\ell_1, \ell_2, \ldots ,\ell_{L_{\rm supp}(t)}$ (for suppressed jumps to the left) respectively.
Note that 
$$
\left((\eta_s)_{0 \leq s \leq t}, (X_s)_{0 \leq s \leq t},(r_1, r_2, \ldots ,r_{R_{\rm supp}(t)}), (\ell_1, \ell_2, \ldots ,\ell_{L_{\rm supp}(t)})\right) 
$$ 
has the same law under $\mathbb{P}_0$ as 
$$
\left( (\tilde \eta_s)_{0 \leq s \leq t}, (\tilde X_s)_{0 \leq s \leq t},(t- r_1, \ldots ,t- r_{R_{\rm supp}(t)}), (t-\ell_1,  \ldots ,t-\ell_{ L_{\rm supp}(t)} )\right)\, .
$$
Since
$R(t)-L(t) = X_t = - \tilde X_t$ and $R_{\rm supp}(t)$ and $L_{\rm supp}(t)$ are the same for both processes,
 see Figure \ref{fig:TimeRev}, we conclude that \eqref{orthogonality} holds true.  \end{proof} 
\begin{figure}\centering \begin{tikzpicture}[scale=0.8]
\label{fig:TimeRev}

\foreach \x in{2,...,5}{
	\draw[gray!50,thin](1.6,\x) to (5.4,\x);  
	\draw[gray!50,thin](\x,1.6) to (\x,5.4);   
}

	\draw[red,line width=1.5pt] (3,5) --(3,4);
	\draw[red,line width=1.5pt] (3,4) --(4,4);
	\draw[red,line width=1.5pt] (4,4) --(4,3);	
	\draw[red,line width=1.5pt] (4,3) --(3,3);	
	\draw[red,line width=1.5pt] (3,3) --(3,2);	
	\draw[red,line width=1.5pt] (3,2) --(2,2);	
	\draw[red,line width=1.5pt] (2,2) --(2,3);	
	
	\draw[red,line width=1.5pt,densely dotted] (2,4) --(3,4);
	\draw[red,line width=1.5pt,densely dotted] (4,4) --(5,4);
	\draw[red,line width=1.5pt,densely dotted] (4,3) --(5,3);

\draw[fill,black](3-0.1,5-0.1) rectangle (3+0.1,5+0.1);

	\draw[line width=1.pt] (3+0.2,4.4+0.2) --(3,4.4)--(3-0.2,4.4+0.2);
	\draw[line width=1.pt] (4+0.2,3.4+0.2) --(4,3.4)--(4-0.2,3.4+0.2);
	\draw[line width=1.pt] (3+0.2,2.4+0.2) --(3,2.4)--(3-0.2,2.4+0.2);
	\draw[line width=1.pt] (2-0.2,2.6-0.2) --(2,2.6)--(2+0.2,2.6-0.2);

	\draw[line width=1.pt] (2.4+0.2,4+0.2) --(2.4,4)--(2.4+0.2,4-0.2);		
	\draw[line width=1.pt] (3.6-0.2,4-0.2) --(3.6,4)--(3.6-0.2,4+0.2);	
	\draw[line width=1.pt] (4.6-0.2,4-0.2) --(4.6,4)--(4.6-0.2,4+0.2);		

	\draw[line width=1.pt] (3.4+0.2,3+0.2) --(3.4,3)--(3.4+0.2,3-0.2);		
	\draw[line width=1.pt] (4.6-0.2,3-0.2) --(4.6,3)--(4.6-0.2,3+0.2);		

	\draw[line width=1.pt] (2.4+0.2,2+0.2) --(2.4,2)--(2.4+0.2,2-0.2);

	\node[scale=0.7] (ab) at (3.3,4.7) {$1$};	
	\node[scale=0.7] (ab) at (2.5,4.35) {$2$};		
	\node[scale=0.7] (ab) at (3.5,4.35) {$3$};		
	\node[scale=0.7] (ab) at (4.5,4.35) {$4$};		
	\node[scale=0.7] (ab) at (4.3,3.7) {$5$};
	\node[scale=0.7] (ab) at (4.5,3.35) {$6$};		
	\node[scale=0.7] (ab) at (3.5,3.35) {$7$};	
	\node[scale=0.7] (ab) at (3.3,2.6) {$8$};
	\node[scale=0.7] (ab) at (2.5,2.35) {$9$};
	\node[scale=0.7] (ab) at (2.3,2.7) {$10$};

	\def \y {6};
	
\foreach \x in{2,...,5}{
	\draw[gray!50,thin](1.6+\y,\x) to (5.4+\y,\x);  
	\draw[gray!50,thin](\x+\y,1.6) to (\x+\y,5.4);   
}

	\draw[red,line width=1.5pt] (3+\y,5) --(3+\y,4);
	\draw[red,line width=1.5pt] (3+\y,4) --(4+\y,4);
	\draw[red,line width=1.5pt] (4+\y,4) --(4+\y,3);	
	\draw[red,line width=1.5pt] (4+\y,3) --(3+\y,3);	
	\draw[red,line width=1.5pt] (3+\y,3) --(3+\y,2);	
	\draw[red,line width=1.5pt] (3+\y,2) --(2+\y,2);	
	\draw[red,line width=1.5pt] (2+\y,2) --(2+\y,3);	
	
	\draw[red,line width=1.5pt,densely dotted] (2+\y,4) --(3+\y,4);
	\draw[red,line width=1.5pt,densely dotted] (4+\y,4) --(5+\y,4);
	\draw[red,line width=1.5pt,densely dotted] (4+\y,3) --(5+\y,3);

\draw[fill,black](2-0.1+\y,3-0.1) rectangle (2+0.1+\y,3+0.1);

		\draw[line width=1.pt] (\y+3-0.2,4.6-0.2) --(\y+3,4.6)--(\y+3+0.2,4.6-0.2);
	\draw[line width=1.pt] (\y+4-0.2,3.6-0.2) --(\y+4,3.6)--(\y+4+0.2,3.6-0.2);
	\draw[line width=1.pt] (\y+3-0.2,2.6-0.2) --(\y+3,2.6)--(\y+3+0.2,2.6-0.2);
	\draw[line width=1.pt] (\y+2+0.2,2.4+0.2) --(\y+2,2.4)--(\y+2-0.2,2.4+0.2);

	\draw[line width=1.pt] (\y+2.4+0.2,4+0.2) --(\y+2.4,4)--(\y+2.4+0.2,4-0.2);		
	\draw[line width=1.pt] (\y+3.4+0.2,4+0.2) --(\y+3.4,4)--(\y+3.4+0.2,4-0.2);	
	\draw[line width=1.pt] (\y+4.6-0.2,4-0.2) --(\y+4.6,4)--(\y+4.6-0.2,4+0.2);		

	\draw[line width=1.pt] (\y+3.6-0.2,3-0.2) --(\y+3.6,3)--(\y+3.6-0.2,3+0.2);		
	\draw[line width=1.pt] (\y+4.6-0.2,3-0.2) --(\y+4.6,3)--(\y+4.6-0.2,3+0.2);		

	\draw[line width=1.pt] (\y+2.6-0.2,2-0.2) --(\y+2.6,2)--(\y+2.6-0.2,2+0.2);	
	
		\node[scale=0.7] (ab) at (\y+3.3,4.7) {$10$};	
	\node[scale=0.7] (ab) at (\y+2.5,4.35) {$9$};		
	\node[scale=0.7] (ab) at (\y+3.5,4.35) {$8$};		
	\node[scale=0.7] (ab) at (\y+4.5,4.35) {$7$};		
	\node[scale=0.7] (ab) at (\y+4.3,3.7) {$6$};
	\node[scale=0.7] (ab) at (\y+4.5,3.35) {$5$};		
	\node[scale=0.7] (ab) at (\y+3.5,3.35) {$4$};	
	\node[scale=0.7] (ab) at (\y+3.3,2.6) {$3$};
	\node[scale=0.7] (ab) at (\y+2.5,2.35) {$2$};
	\node[scale=0.7] (ab) at (\y+2.3,2.7) {$1$};
	
\end{tikzpicture}
\caption{Visualization of the trajectory of a random walk in dynamical percolation and its time-reversal. The black dot marks the origin $(0,0)$, while the numbers refer to the order in which jumps are attempted (and performed) in the two processes. A line is dotted when a jump was attempted, but suppressed. }
\end{figure}
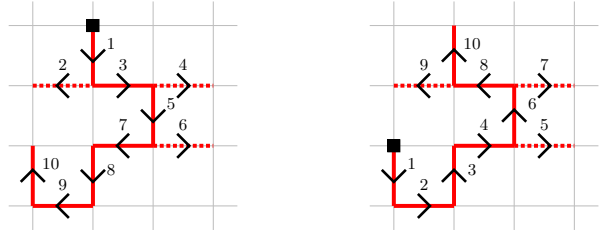

\begin{proposition}\label{pro:Einstein} Fix $p\in(0,1)$ and $\mu>0$. Then the speed function $\lambda \mapsto v(\lambda)$ is strictly positive for all $\lambda>0$. 
\end{proposition}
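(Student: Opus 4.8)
The plan is to reduce the statement to a property of the walk over a single regeneration period and then to exploit the reflection symmetry of the symmetric walk. By Proposition~\ref{pro:RegenerationSpeed} and Lemma~\ref{lem:ExpoTailsTau} we have $v(\lambda)=\estart{X_{\tau_1}^1}{\lambda}/\estart{\tau_1}{}$ with $\estart{\tau_1}{}=e^{1/\mu}\in(0,\infty)$, so it suffices to show that $f(\lambda):=\estart{X_{\tau_1}^1}{\lambda}>0$ for every $\lambda>0$. By Lemma~\ref{lem:uptotau},
\[
f(\lambda)=\estart{(R-L)\,e^{\lambda(R_a-L_a)}\Big(\tfrac{2d}{Z_\lambda}\Big)^{\cU_a(\tau_1)}}{0}.
\]

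Next I would symmetrise. Let $\sigma$ be the reflection of the first coordinate of $\Z^d$. Applied simultaneously to the walk, the environment and the infected set, $\sigma$ preserves $\P_0$ and leaves the regeneration structure (hence $\tau_1$ and $\cU_a(\tau_1)$) unchanged, while exchanging right- and left-steps, so that $R-L$ and $R_a-L_a$ both change sign. Writing $A_{k_a,\ell_a,m}=\{R_a=k_a,\,L_a=\ell_a,\,\cU_a(\tau_1)=m\}$, conditioning on $A_{k_a,\ell_a,m}$, pairing it with its reflection $A_{\ell_a,k_a,m}$, and using $\P_0(A_{k_a,\ell_a,m})=\P_0(A_{\ell_a,k_a,m})$ together with $\estart{R-L\mid A_{\ell_a,k_a,m}}{0}=-\estart{R-L\mid A_{k_a,\ell_a,m}}{0}$, the terms with $k_a=\ell_a$ cancel and one obtains
\[
f(\lambda)=\sum_{m\ge1}\ \sum_{k_a>\ell_a} 2\,\P_0(A_{k_a,\ell_a,m})\,\Big(\tfrac{2d}{Z_\lambda}\Big)^{\!m}\sinh\!\big(\lambda(k_a-\ell_a)\big)\ \estart{R-L\mid A_{k_a,\ell_a,m}}{0}.
\]
In this series all factors except the conditional expectation are positive for $\lambda>0$, so it suffices to prove the monotonicity statement $\estart{R-L\mid A_{k_a,\ell_a,m}}{0}\ge 0$ whenever $k_a\ge\ell_a$, together with strict positivity of this quantity for at least one admissible triple with $k_a>\ell_a$.

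The monotonicity statement --- conditionally on the symmetric walk attempting at least as many $+\eone$-steps as $-\eone$-steps before the first regeneration, its expected net displacement is nonnegative --- is the crux, and I expect it to be the main obstacle. The difficulty is the familiar one: once a step is suppressed the walker sits on a different vertex, so the edges it examines afterwards, and hence which later steps are suppressed, depend on the past outcomes. I would establish it by a coupling of the walk driven by a sequence of attempted directions $\vec\epsilon$ with the one driven by the reflected sequence $-\vec\epsilon$ --- or, one step at a time, by flipping a single suppressed $-\eone$-attempt into a $+\eone$-attempt while keeping all refreshment clocks fixed and controlling the induced change of the examined edges --- in the spirit of the measure-preserving transformation already used in the proof of Proposition~\ref{einstein-x}; using the exponential tails of $\cU_a(\tau_1)$ from Lemma~\ref{cor:exptailsofutauone} one may first reduce to bounded $m$.

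For the strict inequality it suffices to take $m=1$ with the single attempted step in direction $+\eone$: the triple $(k_a,\ell_a,m)=(1,0,1)$ has positive $\P_0$-probability, and since at the time of that step the edge $\{0,\eone\}$ has not yet been examined and is therefore distributed according to $\pi_p$, one gets $\estart{R-L\mid A_{1,0,1}}{0}=p>0$. Together with the nonnegativity of all remaining summands this gives $f(\lambda)>0$, and hence $v(\lambda)>0$, for every $\lambda>0$.
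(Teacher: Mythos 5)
Your symmetrisation step is fine and parallels the opening move of the paper's argument, but it reduces the proposition to exactly the statement you yourself flag as ``the crux'', and that statement is asserted rather than proved. After pairing $(k_a,\ell_a)$ with $(\ell_a,k_a)$ you need $\econd{R-L}{R_a=k_a,\,L_a=\ell_a,\,\cU_a(\tau_1)=m}$ to be nonnegative for \emph{every} triple with $k_a>\ell_a$; since suppressed attempts can give the performed displacement the opposite sign from the attempted one, the integrand $(R-L)\sinh(\lambda(R_a-L_a))$ is not nonnegative pathwise, so everything hinges on this conditional inequality. It is genuinely delicate: if one conditions further on the \emph{order} of the attempts, the analogous inequality can fail — in a caricature where the environment does not refresh between attempts ($d=1$, attempts $+\eone,+\eone,-\eone$), the expected displacement is $p(2p-1)<0$ for $p<1/2$, because the second right attempt re-examines an edge already seen closed while the left attempt probes a fresh edge. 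So any proof must exploit the averaging over orderings and the correlations with the refreshment dynamics, and the flip/reflection coupling you sketch does not obviously deliver this: once a suppressed $-\eone$-attempt is changed into a $+\eone$-attempt the two walkers occupy different vertices and afterwards examine different, history-correlated edges — precisely the obstruction you name, and for which no controlling argument is given. The $(k_a,\ell_a,m)=(1,0,1)$ computation is correct but does not help without nonnegativity of all remaining terms. As it stands, the proposal has a genuine gap at its central step.

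The paper is structured specifically to avoid this difficulty. It uses a \emph{time reversal} (not a spatial reflection), over a fixed horizon $t$: under time reversal the performed right and left jumps are exchanged while $R_{\rm supp}(t)$ and $L_{\rm supp}(t)$ are unchanged, so the symmetrised integrand becomes $(R(t)-L(t))\big(e^{\lambda(R(t)-L(t))}-e^{-\lambda(R(t)-L(t))}\big)$ times positive factors, which is nonnegative pointwise — no conditional-expectation inequality is needed. The price is that this argument lives at fixed $t$ rather than at $\tau_1$ (the regeneration structure is not manifestly time-reversal invariant), so the paper adds a second step: the martingale property of $e^{-2\lambda(R(t)-L(t))}$ under $\P_\lambda$, the tilted measure $\overline{\P}_\lambda$, and the regeneration times under it, to transfer positivity from $\estart{X^1_t}{\lambda}$ to $\estart{X^1_{\tau_1}}{\lambda}$. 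To repair your route you would either have to actually prove your conditional monotonicity claim — which looks at least as hard as the proposition itself — or switch to a transformation, like the paper's time reversal, under which the sign of the performed displacement is decoupled from the exponent it multiplies.
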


\begin{proof}
We have, using again the time-reversal argument in the proof of Proposition \ref{einstein-x} for the third equality
\begin{align}\label{first-for-t} 
\begin{split}
&\quad  \estart{X_t^1}{\lambda} = \E_{0}\left[\left(R(t) - L(t)\right)e^{\lambda (R_a(t) -L_a(t)) }\left(\frac{2d}{Z_\lambda}\right)^{\cU_a(t)}\right] \\
&= \E_{0}\left[\left(R(t) - L(t)\right)e^{\lambda (R(t)+ R_{\rm supp}(t) - L(t)- L_{\rm supp}(t)) }\left(\frac{2d}{Z_\lambda}\right)^{\cU_a(t)}\right] \\
&= \E_{0}\left[\left(L(t) - R(t)\right)e^{\lambda (L(t)+ R_{\rm supp}(t) - R(t)- L_{\rm supp}(t)) }\left(\frac{2d}{Z_\lambda}\right)^{\cU_a(t)}\right] \\
&= \frac{1}{2} \E_{0}\left[\left(R(t) - L(t)\right)e^{\lambda (R_{\rm supp}(t) - L_{\rm supp}(t)) }\left(e^{\lambda (R(t) -L(t)) }- e^{-\lambda (R(t) -L(t)) }\right)\left(\frac{2d}{Z_\lambda}\right)^{\cU_a(t)}\right] \\
&> 0,
\end{split}
\end{align}
where the last inequality follows since the function $x \mapsto x\left(e^{\lambda x} - e^{-\lambda x}\right)$ is non-negative and strictly positive for $x\neq 0$ and we have $\mathbb{P}_0(R(t)\neq L(t)) > 0$.
(The fourth equality comes from adding the two previous lines and dividing by $2$).
Hence, we know that for fixed $t$, $\estart{X_t^1}{\lambda} > 0$ and we have to show that $\estart{X_{\tau_1}^1}{\lambda} > 0$. 
Going back to \eqref{first-for-t},
\begin{align*}
&\quad  \estart{X_t^1}{\lambda} \\
&= \frac{1}{2} \E_{0}\left[\left(R(t) - L(t)\right)e^{\lambda (R_{\rm supp}(t) - L_{\rm supp}(t)) }
\left(e^{\lambda (R(t) -L(t)) }- e^{-\lambda (R(t) -L(t)) }\right)\left(\frac{2d}{Z_\lambda}\right)^{\cU_a(t)}\right] \\
&= \frac{1}{2} \E_{\lambda}\left[\left(R(t) - L(t)\right) e^{\lambda (R_{\rm supp}(t) - L_{\rm supp}(t)) }\left(e^{\lambda (R(t) -L(t)) }- 
e^{-\lambda (R(t) -L(t)) }\right)e^{-\lambda (R_a(t) - L_a(t)) }\right] \, ,
\end{align*}
where we used \eqref{prodRN1} for the last equality. Hence
\begin{align*}
&\quad  \estart{X_t^1}{\lambda} \\
&= \frac{1}{2} \E_{\lambda}\left[\left(R(t) - L(t)\right)e^{-\lambda (R(t) - L(t)) }\left(e^{\lambda (R(t) -L(t)) }- e^{-\lambda (R(t) -L(t)) }\right) \right] \\
&= \frac{1}{2}\E_{\lambda}\left[\left(R(t) - L(t)\right)\left( 1- e^{-2\lambda (R(t) - L(t)) }\right) \right] \, .
\end{align*}
This implies
\begin{equation}\label{t-is-fixed}
\estart{X_t^1}{\lambda} = - \E_{\lambda}\left[\left(R(t) - L(t)\right) e^{-2\lambda (R(t) - L(t)) } \right] \, .
\end{equation}
We will show that, in the same spirit as in \eqref{eq:SpeedViaTau2},
\begin{equation}\label{from-t-to-tau}
\lim\limits_{ t \to \infty}\frac{1}{t}\E_{\lambda}\left[\left(R(t) - L(t)\right) e^{-2\lambda (R(t) - L(t)) } \right]  
= \frac{1}{\E[\tau_1]} \E_{\lambda}\left[\left(R - L\right) e^{-2\lambda (R - L) } \right]
\end{equation}
Assuming \eqref{from-t-to-tau}, dividing by $t$ in \eqref{t-is-fixed} and letting $t \to \infty$ gives
\begin{equation*}
\estart{X_{\tau_1}^1}{\lambda} = -\E_{\lambda}\left[\left(R - L\right) e^{-2\lambda (R - L) }\right] = \frac{1}{2}\E_{\lambda}\left[\left(R - L\right) \left(1- e^{-2\lambda (R - L) } \right)\right]  > 0,
\end{equation*}
since the function $x \mapsto x\left(1- e^{-2\lambda x}\right)$ is non-negative and strictly positive for $x \neq 0$ and we have 
$\mathbb{P}_\lambda(R\neq L) > 0$.
It remains to prove \eqref{from-t-to-tau}.
We claim that for all $t>s$ we have
\begin{equation}\label{expect-IS-1}
\escond{e^{-2\lambda (R(t) - L(t)) }}{{\cal F}_s}{\lambda}=e^{-2\lambda (R(s) - L(s)) }\, 
\end{equation}
and defer the proof.
Hence, 
$e^{-2\lambda (R(t) - L(t)) }$ can act as a Radon-Nikodym derivative. More precisely, define the probability measure 
$\overline{\mathbb{P}}_\lambda$ by
\begin{equation*}
\left. \frac{d \overline{\mathbb{P}}_\lambda}{d \mathbb{P}_\lambda}\right|_{{\cal F}_t} 
= e^{-2\lambda (R(t) -L(t)) }\, .
\end{equation*}
Note that, as in the proof of \eqref{prodRNtau}, for every $i$, the regeneration time $\tau_i$ is a stopping time with respect to the filtration $({\cal F}_t)_{t \geq 0}$, and 
$\overline{\mathbb{P}}_\lambda$ is absolutely continuous with respect to $\mathbb{P}_\lambda$ on ${\cal F}_{\tau_i}$ (recall the beginning of the proof of Lemma \ref{lem:uptotau}). Hence
\begin{align}\label{eq:explicitchangeofmeasure}
\left. \frac{d \overline{\mathbb{P}}_\lambda}{d \mathbb{P}_\lambda}\right|_{{\cal F}_{\tau_i}} 
= e^{-2\lambda (R(\tau_i) -L(\tau_i)) }\, .
\end{align}
Therefore, taking $i=1$ we get 
\begin{equation}\label{change-prob}
\E_{\lambda}\left[\left(R - L\right) e^{-2\lambda (R - L) } \right] = \overline{\E}_{\lambda}\left[R - L \right]\, ,
\end{equation}
where $\overline{\E}_{\lambda}$ is the expectation with respect to $\overline{\P}_{\lambda}$.
Using the fact that the $(\tau_i)$'s are regeneration times under $\mathbb{P}_\lambda$ and~\eqref{eq:explicitchangeofmeasure} immediately yield that the $(\tau_i)$'s are also regeneration times under~$\overline{\mathbb{P}}_\lambda$.
We can now proceed as in the proof of \eqref{eq:SpeedViaTau2}, replacing the probability measure $\mathbb{P}_\lambda$ by~$\overline{\P}_{\lambda}$, to get
\begin{align*}
\lim\limits_{ t \to \infty}\frac{1}{t}\E_{\lambda}\left[\left(R(t) - L(t)\right) e^{-2\lambda (R(t) - L(t)) } \right]  
&= \lim\limits_{ t \to \infty}\frac{1}{t}\overline\E_{\lambda}\left[R(t) - L(t)\right] \\
&=  \frac{1}{\E[\tau_1]}\overline\E_{\lambda}\left[R - L\right] \\
&=  \frac{1}{\E[\tau_1]} \E_{\lambda}\left[\left(R - L\right) e^{-2\lambda (R - L) } \right] \, ,  \\
\end{align*}
where we used \eqref{change-prob} for the last equality.
Finally, we prove \eqref{expect-IS-1}.
Note that
it suffices to establish the case $s=0$ and $(X_0,\eta_0)=(x,\zeta)$. Omitting the starting state from the notation we have 
\begin{align*}
\E_{\lambda}\left[e^{-2\lambda (R(t) - L(t)) }\right]
&= \E_{0}\left[e^{-2\lambda (R(t) - L(t)) } e^{\lambda (R_a(t)- L_a(t)) }\left(\frac{2d}{Z_\lambda}\right)^{\cU_a(t)}\right] \\
&= \E_{0}\left[e^{-\lambda (R(t) - L(t)) } e^{\lambda (R_{\rm supp}(t) - L_{\rm supp}(t)) }\left(\frac{2d}{Z_\lambda}\right)^{\cU_a(t)}\right] \\
& = \E_{0}\left[e^{\lambda (R(t) - L(t)) } e^{\lambda (R_{\rm supp}(t) - L_{\rm supp}(t)) }\left(\frac{2d}{Z_\lambda}\right)^{\cU_a(t)}\right] \\
&= \E_{0}\left[e^{\lambda (R_a(t)- L_a(t)) }\left(\frac{2d}{Z_\lambda}\right)^{\cU_a(t)}\right]  = 1\,  ,\\
\end{align*}
where we again used the time-reversal argument in the proof of Proposition \ref{einstein-x} for the second-to-last equality.
\end{proof}

\begin{proof}[\bf Proof of Theorem \ref{thm:main}] Theorem \ref{thm:main} is now an immediate consequence of Proposition~\ref{pro:RegenerationSpeed}, Proposition~\ref{einstein-x}, Proposition~\ref{pro:Einstein} and  Lemma~\ref{lem:Differentiability}.
\end{proof}

 \section{Monotonicity of the speed}\label{sec:monotonicity}

 In this section we prove Theorem~\ref{thm:mainTwoDim}. In order to do so, we first establish an asymptotic expression for the speed that is valid for large values of the bias $\lambda$. 
We recall the definition from~\eqref{eq:defz} of $Z_\lambda=e^\lambda+e^{-\lambda}+2d-2$.

  \begin{proposition}\label{pro:MonotoneSubsequence} For $d \geq 1$, let $(X,\eta)$ be a $\lambda$-biased random walk on dynamical percolation on $\Z^d$ with parameters $\mu>0$ and $p\in (0,1)$. There exists some $\lambda_0=\lambda_0(\mu,d)$ such that for all $\lambda>\lambda_0$, 
\begin{equation*}
{v}(\lambda) = \frac{\mu p}{1-p+\mu} -  \frac{(2d-2) p}{(1-p+\mu)^2}(\mu^2-p(1-p)) Z_{\lambda}^{-1}  + \mathcal{O}(e^{-2\lambda})  ,
\end{equation*}
where the implicit constant in $\cO$ depends on $\mu$ and $d$.
\end{proposition}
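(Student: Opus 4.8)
The plan is to reduce the study of the first coordinate $X^1$ to a one-dimensional biased walk on dynamical percolation and to compute the speed of the latter asymptotically in the large-bias regime. I would begin with $d=1$, where the claimed $Z_\lambda^{-1}$-correction vanishes and one only needs $v(\lambda)=\frac{\mu p}{1-p+\mu}+\mathcal{O}(e^{-2\lambda})$. The relevant object is the state (open or closed) of the edge directly in front of the walker. Since the walk is ballistic and a $-\eone$-attempt occurs with probability $e^{-\lambda}/(e^\lambda+e^{-\lambda})=\mathcal{O}(e^{-2\lambda})$ at each ring of the clock, up to an error of order $e^{-2\lambda}$ this state evolves as a two-state continuous-time Markov chain: it is resampled from $\pi_p$ at rate $\mu$ (an environment refresh) and, whenever it is open, additionally at rate $e^{\lambda}/Z_\lambda$ (the walker crosses it and then faces a fresh edge). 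Computing the stationary probability that this edge is open, and turning the heuristic into a proof via a renewal-reward argument along the regeneration times $(\tau_i)$ of Section~\ref{sec.RegenerationTimes}, yields the claim; the neglected events are controlled using the uniform-in-$\lambda$ exponential tail bound on $\cU_a(\tau_1)$ from Lemma~\ref{cor:exptailsofutauone}.

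For $d\geq 2$ I would couple $X^1$ with a one-dimensional walk on dynamical percolation with the same parameters $\mu,p$ whose walker clock rings at the reduced rate $(e^{\lambda}+e^{-\lambda})/Z_\lambda$, running its environment together with the environment on the column currently occupied by $X$ and matching the $\pm\eone$-decisions of the two walks. This coupling survives until $X$ makes a jump in a direction other than $\pm\eone$; since such a lateral attempt has probability $(2d-2)/Z_\lambda=\mathcal{O}(e^{-\lambda})$ per ring and $\cU_a(\tau_1)$ has exponential tails, two or more lateral attempts within a single regeneration block occur with probability $\mathcal{O}(e^{-2\lambda})$, so only the effect of the first lateral move matters. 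When that move succeeds --- the lateral edge being open with probability $p+\mathcal{O}(e^{-2\lambda})$, as that edge was typically not examined before --- the first coordinate is unchanged but the walker faces a fresh edge distributed as $\pi_p$. Hence, up to $\mathcal{O}(e^{-2\lambda})$, the two-state chain for the forward edge is the one from the $d=1$ analysis with refresh rate $\mu$ replaced by $\nu:=\mu+(2d-2)p/Z_\lambda$ (the extra rate being the resetting effect of successful lateral moves) and crossing rate $\alpha:=e^{\lambda}/Z_\lambda$, whose stationary law gives
\[
v(\lambda)=\frac{\alpha\nu p}{\nu+\alpha(1-p)}+\mathcal{O}(e^{-2\lambda}).
\]
Substituting $\alpha=1-(2d-2)Z_\lambda^{-1}+\mathcal{O}(e^{-2\lambda})$ and $\nu=\mu+(2d-2)pZ_\lambda^{-1}$, a direct expansion shows that the right-hand side has constant term $\frac{\mu p}{1-p+\mu}$ and first-order coefficient $-\frac{(2d-2)p(\mu^2-p(1-p))}{(1-p+\mu)^2}$, which is exactly the stated formula.

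The main difficulty is the rigorous control of the coupling and of all the neglected terms. Concretely, one must show that the contributions to $\mathbb{E}_\lambda[X^1_{\tau_1}]$ coming from a $-\eone$-move, from two or more lateral attempts in a regeneration block, from a lateral attempt landing on an already examined (hence not $\pi_p$-distributed) edge, and from the walker revisiting a site or column whose environment is therefore not fresh, are each $\mathcal{O}(e^{-2\lambda})$ uniformly in $\lambda$ large. This is where the exponential integrability of $\cU_a(\tau_1)$ and of the regeneration times is essential, together with the fact that the law of $(\tau_i)$ does not depend on $\lambda$: conditioning on the timing structure of the attempts within a block leaves an explicit binomial count of $\pm\eone$ versus lateral decisions, so that each of these events has the required small probability after summing against the exponentially decaying law of $\cU_a(\tau_1)$.
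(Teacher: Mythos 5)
Your proposal is correct and follows essentially the same route as the paper: both reduce to an effective one-dimensional biased walk whose environment refreshes at rate $\mu+p(2d-2)Z_\lambda^{-1}$ (the reset effect of a successful lateral move) and whose forward attempts occur at rate $e^{\lambda}Z_\lambda^{-1}$, control the discarded events ($-\eone$ attempts, multiple lateral attempts per regeneration block) via the $\lambda$-uniform exponential tails of $\cU_a(\tau_1)$, and then expand the resulting one-dimensional speed; your stationary two-state forward-edge computation $\alpha\nu p/(\nu+\alpha(1-p))$ coincides with the paper's Lemma~\ref{lem:speedinonedim} combined with the time change by $1-(2d-2)Z_\lambda^{-1}$, and the expansion indeed yields the stated coefficient.
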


\begin{remark}
\rm{
 Note that the speed $v(\lambda)$ converges to $\mu p (1-p+\mu)^{-1}$ as $\lambda \rightarrow \infty$ in agreement with the $1$-dimensional case as we will see in  Proposition~\ref{thm:mainOneDim}.
}	
\end{remark}

The above proposition proves the monotonicity of $v(\lambda)$ along arithmetic progressions for large $\lambda$. In order to prove Theorem~\ref{thm:mainTwoDim} we also need to obtain a control on the derivative of  $v(\lambda)$ that is valid for large values of $\lambda$.

\begin{lemma}\label{lem:ApproximateDerivate} Let $d \geq 1$.  Then for all $\mu>0$ and $p\in (0,1)$ there exists $\lambda_0=\lambda_0(\mu)$  and  constants~$c_{\mu}>0 $ and $ C_{\mu,p} \in \R$ so that for all $\lambda\geq \lambda_0$ we have 
\begin{equation}\label{eq:ApproximateDerivate}
\left| v^{\prime}(\lambda) - C_{\mu,p} \exp(-\lambda)  \right| \leq c_{\mu}\exp(-2\lambda) \, .
\end{equation} 
\end{lemma}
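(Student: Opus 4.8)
The starting point is the exact formula for the derivative from Lemma~\ref{lem:Differentiability},
\[
v'(\lambda) = \frac{1}{\Ex{\tau_1}}\left( \estart{X_{\tau_1}^1 (R_a - L_a)}{\lambda} - \frac{Z_\lambda'}{Z_\lambda}\estart{X_{\tau_1}^1 \cU_a(\tau_1)}{\lambda}\right),
\]
with $Z_\lambda' = e^\lambda - e^{-\lambda}$ so that $Z_\lambda'/Z_\lambda = 1 - (2d-2+2e^{-\lambda})/Z_\lambda = 1 + \mathcal{O}(e^{-\lambda})$. So $v'(\lambda) = \Ex{\tau_1}^{-1}\left(\estart{X_{\tau_1}^1(R_a - L_a - \cU_a(\tau_1))}{\lambda} + \mathcal{O}(e^{-\lambda})\cdot\estart{|X_{\tau_1}^1|\cU_a(\tau_1)}{\lambda}\right)$. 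Writing $R_a - L_a - \cU_a(\tau_1) = -(L_a + M_a)$ where $M_a = \cU_a(\tau_1) - R_a - L_a$ is the number of attempted jumps in a transverse direction $\pm\textup{e}_i$, $i\geq 2$, the plan is to show that as $\lambda\to\infty$ the dominant contribution comes from configurations with exactly one attempted jump, and to extract the order-$e^{-\lambda}$ term explicitly.

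First I would pass to the unbiased measure via Lemma~\ref{lem:uptotau}-style change of measure: $\estart{X_{\tau_1}^1 \cdot \Phi}{\lambda} = \estart{(R-L)\Phi\, e^{\lambda(R_a - L_a)}(2d/Z_\lambda)^{\cU_a(\tau_1)}}{0}$ for any $\mathcal{F}_{\tau_1}$-measurable $\Phi$. Since under $\P_0$ the distribution of $\cU_a(\tau_1)$ has exponential tails uniformly in $\lambda$ (Lemma~\ref{cor:exptailsofutauone}), and $e^{\lambda(R_a-L_a)}(2d/Z_\lambda)^{\cU_a(\tau_1)} = (2d)^{\cU_a(\tau_1)} e^{\lambda(R_a-L_a)}Z_\lambda^{-\cU_a(\tau_1)}$ with $e^\lambda/Z_\lambda \to 1$ and $e^{-\lambda}/Z_\lambda \to 0$, one sees that the weight of any configuration with $\cU_a(\tau_1)=m$ and $k_a$ right/$\ell_a$ left/$m-k_a-\ell_a$ transverse attempts behaves like $(2d)^m Z_\lambda^{-m}e^{\lambda k_a}e^{-\lambda\ell_a}$; after multiplying by $Z_\lambda^m\sim e^{\lambda m}$-type normalisation one organises the whole sum as a power series in $e^{-\lambda}$. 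I would then carefully collect: the constant term (which must vanish, consistent with $v(\lambda)\to \mu p/(1-p+\mu)$ being asymptotically flat), and the coefficient of $e^{-\lambda}$, which defines $C_{\mu,p}$. Concretely the $e^{-\lambda}$ coefficient picks up (i) the contributions where one transverse or one leftward attempt occurs while everything else is rightward, and (ii) the expansion of $Z_\lambda^{-m} = e^{-\lambda m}(1 + (2d-2)e^{-\lambda} + \cdots)^{-m}$ and of $Z_\lambda'/Z_\lambda$. Here I would lean on the explicit one-dimensional computations behind Proposition~\ref{pro:MonotoneSubsequence} (and the coupling described there reducing $d\geq 2$ to a time-changed one-dimensional walk until the second transverse jump), since the structure of $\estart{X_{\tau_1}^1\cU_a(\tau_1)}{\lambda}$ and $\estart{X_{\tau_1}^1(R_a-L_a)}{\lambda}$ in that regime is exactly what feeds the constants.

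The remainder estimate is the routine part: every term I discard is $\mathcal{O}(e^{-2\lambda})$ times a convergent sum controlled by $\estart{(\cU_a(\tau_1))^2 (2d)^{\cU_a(\tau_1)} e^{\lambda(R_a - L_a)}Z_\lambda^{-\cU_a(\tau_1)}}{0}$, which is bounded uniformly in $\lambda\geq\lambda_0$ because on the event $\cU_a(\tau_1)=m$ one has $e^{\lambda(R_a-L_a)}Z_\lambda^{-m}\leq (e^\lambda/Z_\lambda)^m \leq 1$, and $\estart{(\cU_a(\tau_1))^2 (2d)^{\cU_a(\tau_1)}}{0}<\infty$ by the exponential-tail bound with $c_\mu$ large (taking $\lambda_0=\lambda_0(\mu)$ to absorb the regime where $c_\mu$ is not yet beating $\log(2d)$). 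Dividing by $\Ex{\tau_1}=e^{1/\mu}$ finishes the bound with $c_\mu$ depending only on $\mu$ and $C_{\mu,p}$ depending on $\mu,p$ (and $d$, harmlessly, since $d$ is fixed). The main obstacle I anticipate is bookkeeping: identifying \emph{precisely} which configurations contribute to the $e^{-\lambda}$ coefficient and showing the constant term genuinely cancels — this requires being careful that "$X_{\tau_1}^1=R-L$" and the suppressed-jump counts interact correctly with the $e^{\lambda(R_a-L_a)}$ reweighting, and is where the time-reversal symmetry used in Propositions~\ref{einstein-x} and~\ref{pro:Einstein} will likely be needed again to simplify cross terms.
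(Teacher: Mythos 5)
Your route---expanding the explicit series for $f'(\lambda)$ from Lemma~\ref{lem:Differentiability} in powers of $e^{-\lambda}$---is genuinely different from the paper's proof, which instead couples $X^{\lambda}$ and $X^{\lambda+\varepsilon}$ through a marked Poisson process (good/bad/very bad points) and shows, in Lemma~\ref{lem:boundsongvellandr}, that the relevant conditional expectations are functions $f(k,\ell),g(k,\ell)$ free of $\lambda$ and $\varepsilon$. Your plan could be made to work, but as written it has a step that fails and a step that is missing. The failing step is the remainder bound: you control the discarded terms by $\estart{(\cU_a(\tau_1))^2(2d)^{\cU_a(\tau_1)}e^{\lambda(R_a-L_a)}Z_\lambda^{-\cU_a(\tau_1)}}{0}$, then bound $e^{\lambda(R_a-L_a)}Z_\lambda^{-m}\le 1$ and require $\estart{(\cU_a(\tau_1))^2(2d)^{\cU_a(\tau_1)}}{0}<\infty$. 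That needs the tail rate $c_\mu$ of Lemma~\ref{cor:exptailsofutauone} to exceed $\log(2d)$, which is guaranteed only for large $\mu$, whereas the lemma is claimed for every $\mu>0$; and choosing $\lambda_0(\mu)$ cannot ``absorb'' an expectation that is infinite for all $\lambda$. The cancellation you discarded is the conditional probability of the direction pattern: given $\cU_a(\tau_1)=m$ (whose law is independent of the directions and of $\lambda$), a pattern with $k_a$ right, $\ell_a$ left and $m-k_a-\ell_a$ transverse attempts has probability $\binom{m}{k_a,\,\ell_a,\,m-k_a-\ell_a}(2d)^{-k_a-\ell_a}\bigl((2d-2)/(2d)\bigr)^{m-k_a-\ell_a}$, so $(2d)^m p_0(k_a,\ell_a,k,\ell,m)$ is controlled by $\pr{\cU_a(\tau_1)=m}$ times a combinatorial factor, while every non-right attempt also carries a factor $e^{-\lambda}$ because $e^{\lambda(k_a-\ell_a)}Z_\lambda^{-m}\le e^{-\lambda(m-k_a+\ell_a)}$. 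Keeping these factors attached (the same device the paper uses at the end of the proof of Lemma~\ref{lem:Differentiability}, where the weighted sum is recognised as a $\P_\lambda$-expectation with $\lambda$-independent $\cU_a(\tau_1)$-marginal), the error sums converge for every $\mu>0$ once $\lambda\ge\lambda_0(\mu,d)$; this bookkeeping is necessary and is not what you wrote.

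The missing step is the heart of the lemma: exhibiting the coefficient of $e^{-\lambda}$ as a single finite, $\lambda$-independent constant $C_{\mu,p}$. You defer exactly this as ``the main obstacle'', so the proposal does not yet contain a proof. For the record, once the bookkeeping above is in place it is not long: the constant term vanishes automatically because on all-right configurations ($k_a=m$, $\ell_a=0$) the factor $k_a-\ell_a-m Z_\lambda'/Z_\lambda=m(2d-2+2e^{-\lambda})/Z_\lambda$ is itself $\cO(me^{-\lambda})$, while any configuration with a non-right attempt already carries weight $\cO(e^{-\lambda})$; the $e^{-\lambda}$ coefficient then comes from (i) all-right configurations via this factor and (ii) configurations with exactly one transverse attempt, both giving convergent $\lambda$-free sums, and single-left-attempt configurations are already $\cO(e^{-2\lambda})$. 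Two further slips: $R_a-L_a-\cU_a(\tau_1)=-(2L_a+M_a)$, not $-(L_a+M_a)$, and no time-reversal argument is needed here.
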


We now have all the tools needed in order to conclude the proof of Theorem \ref{thm:mainTwoDim}. We defer the proofs of Proposition~\ref{pro:MonotoneSubsequence} and Lemma~\ref{lem:ApproximateDerivate} to Sections~\ref{sec:asymptoticexpression} and~\ref{sec:LargeBiasLimit} respectively.

\begin{proof}[\bf Proof of Theorem \ref{thm:mainTwoDim}]
Using Lemma \ref{lem:ApproximateDerivate}, it suffices to study the constant $C_{\mu,p}$ in \eqref{eq:ApproximateDerivate} and show that $C_{\mu,p}>0$ when $\mu^2 > p(1-p)$ as well as $C_{\mu,p}<0$ when $\mu^2 < p(1-p)$. Since we know that the speed is continuously differentiable by Theorem \ref{thm:main}, we get that for all $s>0$ large enough
\begin{equation}
v(2s)-v(s)=\int_{s}^{2s} v^{\prime}(t) \dif t =  C_{\mu,p} \exp(-s) + \mathcal{O}(\exp(-2s) ) \, .
\end{equation} Taking now $s=\lambda$ sufficiently large, we get from Proposition~\ref{pro:MonotoneSubsequence} that
\begin{equation}
C_{\mu,p}=  \frac{(2d-2) p}{(1-p+\mu)^2}(\mu^2-p(1-p)) \, ,
\end{equation} allowing us to conclude, since we get 
\[
v'(\lambda) = C_{\mu,p} e^{-\lambda}+\cO(e^{-2\lambda}), 
\]
and hence the sign of $v'(\lambda)$ agrees with the sign of $C_{\mu,p}$ for all $\lambda$ sufficiently large.
\end{proof}

\subsection{Speed for $d=1$}\label{sec:SpeedRWOneDim}

In this section we focus on dimension $1$, where one can use the obvious coupling between two random walks with different biases to obtain that the speed is increasing as a function of the bias. We investigate the limiting speed and the rate of convergence to the limit for large $\lambda$.

In the following proposition we establish strict monotonicity as well as an explicit form for the speed in the totally asymmetric biased random walk case, where the random walk only attempts jumps to the right.

\begin{lemma}\label{lem:speedinonedim}
	Let $(X,\eta)$ be a totally asymmetric biased random walk in dynamical percolation on $\Z$ with parameters $\mu$ and $p$ that attempts a  jump to the right at rate $1$. Then the speed $\overline{v}$ satisfies
	\[
	\overline{v}= \frac{\mu p}{1-p+\mu}.
	\]
\end{lemma}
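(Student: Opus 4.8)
The plan is to set up a renewal-type computation directly from the regeneration structure, exploiting the fact that in the totally asymmetric case the walk attempts only rightward jumps, so $X$ is nondecreasing and $v = \overline{v}$ equals the long-run rate at which successful jumps are made. First I would observe that between consecutive attempted jumps, the walker sits at a site $x$ and the edge $\{x,x+1\}$ to its right is examined; whether the jump succeeds depends on whether that edge is open at that instant. Under the stationary dynamics, an edge that has been ``left alone'' for a long time is open with probability $p$; but the relevant edge may have been recently examined (and hence its refresh clock is being tracked through the infected-set construction), so the success probability is not simply $p$ and must be computed from the joint law of the walker's clock, the edge's refresh clock, and the infected-set mechanism.

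The cleaner route, which I would pursue, is to identify the stationary Markov chain governing the ``relevant edge'' as seen by the walker. Consider the edge currently to the right of the walker: it has an associated state (open/closed) and, crucially, since it is examined every time the walker's rate-$1$ clock rings, it gets added to the infected set and thereafter is refreshed at rate $\mu$ (via the $N_t$ process) rather than at the usual rate. One computes the invariant distribution of the pair (is the front edge open?, time-since-last-refresh structure). Concretely: attempted jumps arrive at rate $1$; a given front edge is refreshed at rate $\mu$ while infected. Between an attempted jump at the current front edge and the next attempt there, either the jump succeeds (the walker moves, a fresh edge becomes the front) or it fails (the walker stays, same edge, but now possibly refreshed). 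Setting up the balance equations: let $q$ be the stationary probability that the front edge is open at the moment it is examined. An edge, once it becomes the front edge (either because the walker just stepped onto a fresh edge, which is open w.p.\ $p$, or because it failed and stayed), is governed by a race between the rate-$1$ re-examination and the rate-$\mu$ refresh. A refresh resets it to open w.p.\ $p$. This gives a small linear system whose solution yields $q = \mu p/(1-p+\mu)$, and then $\overline v = 1 \cdot q$ since successful jumps happen at rate equal to (attempt rate)$\times$(success probability) in stationarity.

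Alternatively, and this is the version I would actually write up to keep things rigorous, I would use the regeneration times: $\overline v = \Ex{X_{\tau_1}^1}/\Ex{\tau_1}$ with $\Ex{\tau_1} = e^{1/\mu}$ by Lemma~\ref{lem:ExpoTailsTau}, and compute $\Ex{X_{\tau_1}^1}$ directly. During $[0,\tau_1]$ the infected set is nonempty except at the endpoints, and $X_{\tau_1}^1$ counts the number of successful rightward jumps in that excursion. One decomposes over $\cU_a(\tau_1) = m$, the number of attempted jumps in the excursion, and over the ordered sequence of refresh events; the key input is that the edge examined at the $k$-th attempt is open with a probability that can be read off from how long it has been since that particular edge was last refreshed, which in turn is encoded by the infected-set excursion. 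Carefully bookkeeping this — using that a ``new'' edge (one not previously examined in the excursion) is open w.p.\ $p$ by stationarity of $\pi_p$, while a ``revisited'' edge's state depends on whether a refresh point of $N$ hit its copy — reduces $\Ex{X_{\tau_1}^1}$ to a sum that telescopes to $\tfrac{\mu p}{1-p+\mu}\,e^{1/\mu}$, giving $\overline v = \mu p/(1-p+\mu)$.

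\textbf{Main obstacle.} The delicate point is handling the conditioning correctly: the status (open/closed) of the front edge is \emph{not} independent of the future evolution of the infected set, because the same rate-$\mu$ clock (inside the $N_t$ process) that would refresh the front edge also controls when the infected set empties, i.e.\ when $\tau_1$ occurs. I expect the bulk of the work to be in disentangling this dependence — either by finding the right stationary description of ``front edge as seen from the walker'' (the first approach) or by a careful, explicit enumeration over infected-set excursions (the second approach). Once the dependence is organized properly, the algebra collapses quickly to the stated formula, and strict positivity of $\overline v$ for $p \in (0,1)$, $\mu > 0$ is immediate from the closed form.
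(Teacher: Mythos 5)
Your first sketch (the stationary ``front edge as seen by the walker'' computation) is essentially correct and is, in different clothing, exactly the paper's argument: the paper exploits that for the totally asymmetric walk the successive edge crossings are i.i.d.\ regenerations, writes $\overline v = 1/\E[S]$ where $S$ is the time to cross a single edge, and gets $\E[S]$ from the two-case recursion $\E[S] = p + (1-p)\bigl(1 + \tfrac{1}{\mu} + \E[S]\bigr)$ (open with probability $p$ at the first attempt; otherwise wait a mean-$1/\mu$ refresh and restart), yielding $\E[S]=(1-p+\mu)/(\mu p)$. Your embedded two-state chain at attempt times (open $\to$ open w.p.\ $p$, closed $\to$ open w.p.\ $\mu p/(\mu+1)$, stationary success probability $q=\mu p/(1-p+\mu)$, speed $=1\cdot q$) is the ergodic-average version of the same one-step analysis and would be fine if you actually wrote down the linear system and the (easy) justification that the long-run fraction of successful attempts equals $q$.

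The gap is in the version you say you would actually write up. Evaluating $\E[X^1_{\tau_1}]$ by enumerating infected-set excursions and asserting that the sum ``telescopes to $\tfrac{\mu p}{1-p+\mu}e^{1/\mu}$'' is not a proof: that identity is exactly the content of the lemma (given $\overline v=\E[X^1_{\tau_1}]/\E[\tau_1]$ and $\E[\tau_1]=e^{1/\mu}$), and no mechanism for the telescoping is given. Moreover, the obstacle you yourself flag — the state of the front edge is correlated with when the infected set empties, since the same rate-$\mu$ removal process drives both — is precisely what makes that bookkeeping delicate, and it is entirely avoidable: for the totally asymmetric walk you do not need the infected-set regeneration times at all, because each successful crossing already resets the situation (fresh front edge distributed as $\mathrm{Ber}(p)$, memoryless attempt clock). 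So the clean fix is to drop the $\tau_1$-based route and write up either the crossing-time recursion or your two-state chain; as it stands, the proposal names the right answer but defers the only nontrivial step to an unproved claim.
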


\begin{proof}[\bf Proof]
Suppose $X_0=0$ and $\eta_0\sim \pi$. Let $S$ be the first time that $X$ jumps along the edge $e=\{0,1\}$. Then $\overline{v} = \Ex{S}^{-1}$.  In order to compute $\E[S]$, we use the following recursion,
\begin{align*}
\Ex{S} = p+ (1-p) \Big( 1+ \frac 1 \mu + \Ex{S}  \Big).
\end{align*}
Indeed,  at the time of the first attempted jump of the walk the edge $e$ is open with probability $p$.  Otherwise,  the walk has to wait an exponential time with parameter $\mu$  until $e$ refreshes,  and when this happens the situation is the same as at time $0$. This recursion readily implies
\begin{align*}
\E[S]= \frac{1-p+\mu}{\mu p} ,
\end{align*}
which gives the result.
\end{proof}

\begin{proposition}[Monotonicity and asymptotic speed for $d=1$]\label{thm:mainOneDim} Let $(X_t,\eta_t)_{t\geq 0}$ be a biased random walk in dynamical percolation on $\Z$ with parameters $\mu$ and $p$.
Then the speed function $v(\lambda)$ from \eqref{eq:SpeedBRW} is strictly increasing for all $\lambda>0$ and satisfies
\begin{equation}
\lim_{\lambda \rightarrow \infty} v_{\mu,p}(\lambda) =  \frac{\mu p}{1-p+\mu} 
\end{equation} for all choices of $p\in (0,1)$ and $\mu>0$.
\end{proposition}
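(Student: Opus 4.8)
The plan is to treat the two assertions separately: strict monotonicity of $\lambda\mapsto v(\lambda)$ for all $\lambda>0$, and identification of the limiting speed as $\lambda\to\infty$.

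For the \textbf{monotonicity}, I would use the standard monotone coupling in $d=1$. Given $0<\lambda_1<\lambda_2$, construct both biased walks $X^{(1)}$ and $X^{(2)}$ on the same dynamical percolation environment $(\eta_t)_{t\ge0}$ and the same Poisson clock for jump attempts, together with a single uniform mark attached to each attempted jump; declare the jump to be to the right if the mark is at most the relevant right-step probability $e^{\lambda_i}/(e^{\lambda_i}+e^{-\lambda_i})$, and to the left otherwise. Since $e^{\lambda}/(e^{\lambda}+e^{-\lambda})$ is strictly increasing in $\lambda$, every right-attempt of $X^{(1)}$ is also a right-attempt of $X^{(2)}$, and every left-attempt of $X^{(2)}$ is also a left-attempt of $X^{(1)}$; because an attempted jump is executed iff the traversed edge is open (and in $d=1$ the two walks would use the same edge only when they are at the same site), one checks by induction on jump attempts that $X^{(2)}_t-X^{(1)}_t\ge 0$ is non-decreasing in the number of attempts whenever the walks coincide, and once separated the gap can only grow. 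A cleaner route avoiding the coincidence bookkeeping is to couple the two walks to a common ``environment of edge-states seen'' and argue monotonicity directly at the level of the regeneration decomposition: by \eqref{eq:SpeedViaTau} and Lemma~\ref{lem:uptotau}, $v(\lambda)=\E_\lambda[X^1_{\tau_1}]/\E[\tau_1]$, and using the change of measure in \eqref{prodRNtau} one can write $\E_\lambda[X^1_{\tau_1}]=\E_0[(R-L)e^{\lambda(R_a-L_a)}(2/Z_\lambda)^{\cU_a(\tau_1)}]$ with $Z_\lambda=e^\lambda+e^{-\lambda}$ in $d=1$; strict monotonicity then follows from computing $v'(\lambda)$ via Lemma~\ref{lem:Differentiability} and a time-reversal/FKG-type argument showing the covariance term has the right sign, analogous to the proof of Proposition~\ref{pro:Einstein}. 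I expect the coupling argument to be the quicker one to write, with the main care needed in handling the (rare) times the two walks sit on the same edge.

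For the \textbf{limiting speed}, the idea is that as $\lambda\to\infty$ the walk becomes the totally asymmetric walk of Lemma~\ref{lem:speedinonedim}, so $v(\lambda)\to\overline v=\mu p/(1-p+\mu)$. To make this rigorous I would couple $X$ (with bias $\lambda$) and the totally asymmetric walk $\overline X$ on the same environment and the same attempt clock, using the same uniform marks so that every attempted jump of $X$ is to the right unless the mark falls in the window of probability $e^{-\lambda}/Z_\lambda\to 0$. Let $A_t$ be the number of ``exceptional'' attempts (left-attempts of $X$) up to time $t$; this is dominated by a Poisson process of rate $e^{-\lambda}/Z_\lambda\le e^{-2\lambda}$, so on a fixed time horizon it vanishes in probability as $\lambda\to\infty$, and until the first exceptional attempt $X$ and $\overline X$ perform the identical trajectory. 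Combined with the uniform (in $\lambda$) control on the regeneration structure from Lemma~\ref{lem:ExpoTailsTau} and Lemma~\ref{cor:exptailsofutauone}, and the representation $v(\lambda)=\E_\lambda[X^1_{\tau_1}]/\E[\tau_1]$, this gives $\E_\lambda[X^1_{\tau_1}]\to\E[\overline X_{\tau_1}]$ by dominated convergence (domination by $\cU_a(\tau_1)$, whose law does not depend on $\lambda$), and $\E[\tau_1]=e^{1/\mu}$ is $\lambda$-independent, hence $v(\lambda)\to\overline v$. Finally one must check $\overline v=\E[\overline X_{\tau_1}]/\E[\tau_1]$ agrees with the value $\mu p/(1-p+\mu)$ from Lemma~\ref{lem:speedinonedim}; this is immediate since the law of large numbers for $\overline X$ along regeneration times gives exactly that ratio.

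The \textbf{main obstacle} is the interchange of limits in the $\lambda\to\infty$ step: one needs a domination that is uniform in $\lambda$ for the quantity $X^1_{\tau_1}$ (or $\cU_a(\tau_1)$) under $\P_\lambda$. Here the key structural fact is that the regeneration times $(\tau_i)$ — and in particular the number of attempted jumps $\cU_a(\tau_1)$ — have a law depending only on $\mu$, not on $\lambda$ (Remark after \eqref{def:RegenTimes} and Lemma~\ref{cor:exptailsofutauone}), so $|X^1_{\tau_1}|\le \cU_a(\tau_1)$ provides a $\lambda$-free integrable dominating function under the \emph{joint} coupling; once this is in place the dominated convergence argument is routine.
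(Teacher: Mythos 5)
Your argument for the limit $\lambda\to\infty$ is essentially the paper's: couple the $\lambda$-biased walk with the totally asymmetric walk of Lemma~\ref{lem:speedinonedim} through a common attempt clock and uniform marks, arrange the coupling so that the two walks share the same regeneration times, and use $|X_{\tau_1}-\overline{X}_{\tau_1}|\le 2\,\cU_a(\tau_1)$ together with the fact that a left-attempt occurs before $\tau_1$ with probability $\cO(e^{-2\lambda})$; the paper does exactly this (even quantitatively, getting $|v(\lambda)-\overline{v}|\le Ce^{-2\lambda}$). The one point you gloss over is how the two walks keep identical regeneration times after they separate: they then examine different edges, so the paper couples only the removal clocks of the two infected sets (keeping the sets of equal size at all times) rather than literally running both walks in one common environment; with that fix your dominated-convergence step goes through.

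The genuine gap is the strictness of the monotonicity, which is what the proposition asserts. Your order-preserving coupling yields $X^{(2)}_t\ge X^{(1)}_t$ for all $t$ and hence only $v(\lambda_1)\le v(\lambda_2)$; moreover the claim that ``once separated the gap can only grow'' is false — with a shared attempt clock the gap can shrink and the walks can re-coalesce, only the ordering is preserved. To upgrade to a strict inequality the paper builds a joint regeneration structure for the coupled pair (a modified infected set collecting copies of edges examined by either walk, with joint regeneration time $\overline{\tau}$ shown to have finite moments), writes $v(\lambda_1)=\mathbf{E}[X_{\overline{\tau}}]/\mathbf{E}[\overline{\tau}]$ and $v(\lambda_2)=\mathbf{E}[\til{X}_{\overline{\tau}}]/\mathbf{E}[\overline{\tau}]$, and exhibits an explicit positive-probability event (the two walks attempt different directions at the first jump and the corresponding copies are removed before the next attempt, so regeneration occurs while the walks are strictly ordered) giving $\mathbf{P}(X_{\overline{\tau}}<\til{X}_{\overline{\tau}})>0$; none of this appears in your proposal. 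Your alternative route — deducing $v'(\lambda)>0$ for all $\lambda>0$ from Lemma~\ref{lem:Differentiability} via a ``time-reversal/FKG-type'' sign argument — is only a hope: the time-reversal identity of Proposition~\ref{einstein-x} is exploited at $\lambda=0$, and no soft positivity argument can work in general, since by Theorem~\ref{thm:mainTwoDim} the same derivative expression is eventually negative in $d\ge 2$ when $\mu^2<p(1-p)$, so any proof must use one-dimensional structure that you do not identify. In short, the second half of your proof is sound and matches the paper, but strict monotonicity is not established.
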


\begin{proof}[\bf Proof] 

We start by arguing that the speed is strictly increasing in $\lambda>0$.
We construct a coupling $\mathbf{P}$ between a $\lambda_1$-biased random walk $(X_t,\eta_t)_{t\geq 0}$ and a $\lambda_2$-biased random walk $(\til{X}_t, \til{\eta}_t)_{t\geq 0}$ on dynamical percolation on $\Z$ with $0<\lambda_1<\lambda_2$. We take the same environment for both walks and we let them attempt jumps in the following way: whenever the two random walks are at the same location, we couple them by using the same exponential $1$ clocks to determine the jump times and then moving them both to the right with probability $e^{\lambda_1}/(e^{\lambda_1}+e^{-\lambda_1})$, moving $\til{X}$ to the right and $X$ to the left with probability $e^{\lambda_2}/(e^{\lambda_2}+e^{-\lambda_2}) - e^{\lambda_1}/(e^{\lambda_1}+e^{-\lambda_1})$ and moving them both to the left otherwise. If the two walks are in different locations, we let them attempt jumps in the common environment using independent exponential $1$ clocks. 

Recall the construction of the infected set from Section \ref{sec.RegenerationTimes} and the definition of copies of edges. 
We define the following \textbf{modified infected set} $(\overline{I}_t)_{t \geq 0}$, where for every $t\geq 0$, $\overline{I}_t$ is a set containing copies of edges. Suppose that for some $t \geq 0$, both random walks are at the same position. If the two random walks examine the same edge $e_i$ for some $i\in \N$ and no copy of $e_i$ is contained in $\overline{I}_{t_-}$, we set
\begin{equation*}
\overline{I}_{t} := \overline{I}_{t_-} \cup \{e_{i,1} \} .
\end{equation*} 
Otherwise, we add to $\overline{I}_t$ the copy $e_{i,j}$ of $e_i$ with the smallest index $j$ such that $e_{i,j} \notin \overline{I}_{t_-}$. If the random walks are at the same position, but examine different edges, we add for both the copies of the edges with the smallest index as above to the modified infected set. When the two random walks are at different positions, recall that according to the coupling, the two random walks  perform jumps according to independent exponential $1$ clocks. Whenever an edge is examined by one of the two random walks, we add  its respective copy to the modified infected set.

Let $(\overline{N}_t)_{t \geq 0}$ be a Poisson process with time dependent intensity $\mu |\overline{I}_t|$. Whenever a clock of this process rings at time $t$, we choose an index uniformly at random from $\{1,\dots,|\overline{I}_t|\}$ and remove the copy of the edge with this index in $\overline{I}_t$ according to the ordering $\preceq$ of edges from Section~\ref{sec.RegenerationTimes}. If the picked copy is of the form $e_{i,1}$ for some $i\in \N$, we also refresh the state of the edge $e_i$ in the common environment $\eta_t$ for the two walkers, i.e.\ we set $\eta_t(e_i)=1$ with probability $p$, and $\eta_t(e_i)=0$, otherwise. For all edges $e_j$ with $e_{j,1} \notin \overline{I}_t$, we use independent rate $\mu$ Poisson clocks to determine when the respective edge is updated in the environment for the two random walks. 

Note that under this coupling $\mathbf{P}$ the biased random walks on dynamical percolation $(X_t,\eta_t)_{t\geq 0}$ and~$(\til{X}_t,\til{\eta}_t)_{t\geq 0}$ have marginally the correct law. We define
\begin{equation}
\overline{\tau} := \inf\{ t > 0 \colon \overline{I}_{t} = \emptyset \text{ and } \overline{I}_{t^{\prime}} \neq \emptyset \text{ for some } t^{\prime} \in (0,t)  \}  .
\end{equation} 
Since the process $(|\overline{I}_t|)_{t \geq 0}$ is dominated from above by a biased random walk on $\{0,1,\dots\}$ with transition rates $q(i,i-1)= \mu i $ and $q(i-1,i+1)=2$ for all $i\in \N$, a similar argument as in Lemma~\ref{lem:ExpoTailsTau} (see also the proof of Lemma~\ref{cor:exptailsofutauone}) shows that that the random variable~$\overline{\tau}$ has all finite moments. Applying now the same arguments as in the proof of Proposition \ref{pro:RegenerationSpeed}, we get that
\begin{equation}
v(\lambda_1) = \frac{\mathbf{E}[X_{\overline{\tau}}]}{\mathbf{E}[\overline{\tau}]} \quad \text{ and } \quad v(\lambda_2) = \frac{\mathbf{E}[\til{X}_{\overline{\tau}}]}{\mathbf{E}[\overline{\tau}]}, 
\end{equation} 
where we write $\mathbf{E}$ for the expectation with respect to $\mathbf{P}$. Note that $\mathbf{P}(X_{\overline{\tau}} \leq \til{X}_{\overline{\tau}})=1$. Considering the event that both random walks jump into different directions, and the respective edge copies get removed from the modified infected set before another jump occurs, we also get 
\[
\mathbf{P}(X_{\overline{\tau}}< \til{X}_{\overline{\tau}}) \geq 
\left(\frac{e^{\lambda_2}}{e^{\lambda_2}+ e^{-\lambda_2}} - \frac{e^{\lambda_1}}{e^{\lambda_1}+ e^{-\lambda_1}} \right)\cdot \left(\frac{\mu}{\mu+1} \right)^2>0
\] as $\lambda_1<\lambda_2$. This immediately implies that $v(\lambda_1)<v(\lambda_2)$, hence establishing strict monotonicity. 

Next, we investigate the speed when $\lambda \rightarrow \infty$. Let $X$ be a $\lambda$-biased walk and let~$\til{X}$ be a totally biased random walk as in Lemma~\ref{lem:speedinonedim}, i.e.\ it only attempts jumps to the right at rate $1$. We couple $X$ and $\til{X}$ by letting them both jump at the points of a Poisson process $\cP$ of rate $1$. At each jump attempt, we toss an independent coin and with probability $e^{\lambda}/(e^\lambda + e^{-\lambda})$ they both attempt a jump to the right, while with the complementary probability $X$ attempts a jump to the left while $\til{X}$ attempts a jump to the right. After the first time that they move in different directions, we couple their environments by using the same rate $\mu$ Poisson process for the removal of copies of edges of their respective infected sets. In this way, both infected sets always have the same size and the two walks have the same regeneration times. Letting $\tau_1$ be their first regeneration time, we then have 
\begin{align*}
	|v(\lambda)-\overline{v}| = \frac{1}{\Ex{{{\tau_1}}}} \cdot \left|\Ex{X_{{\tau_1}}}- \Ex{\til{X}_{{\tau_1}}}\right|.
\end{align*}
By the description of the coupling above, it is clear that given $|\cP(\tau_1)|$, the number of attempted jumps to the left by $X$ has the binomial distribution with parameters $|\cP(\tau_1)|$ and $e^{-\lambda}/(e^\lambda + e^{-\lambda})$. Using the above coupling we see that $X_{\tau_1}$ is equal to $\til{X}_{\tau_1}$ on the event that there is no left jump by time $\tau_1$. We then get by a union bound
\begin{align}\label{eq:totallyasymmetric}
\left|	\Ex{X_{{\tau_1}}}- \Ex{\til{X}_{{\tau_1}}}\right| \leq \Ex{|\cP(\tau_1)|^2 \cdot \frac{e^{-\lambda}}{e^\lambda+e^{-\lambda}}}\leq C\cdot \exp(-2\lambda), 
\end{align}
where $C$ is a positive constant using also Lemma~\ref{cor:exptailsofutauone}. We can thus deduce that as $\lambda\to \infty$ the above tends to $0$, and hence we conclude that 
\begin{equation}
\lim_{\lambda \rightarrow \infty} v(\lambda) = \bar{v},
\end{equation}
where $\bar{v}$ is given in Lemma~\ref{lem:speedinonedim}.
\end{proof}

 \subsection{Asymptotic expression for the speed}\label{sec:asymptoticexpression}

In this section we prove Proposition~\ref{pro:MonotoneSubsequence}. In order to do so, we first construct a coupling between $(X,\eta)$ and a one-dimensional biased random walk $Y$ in a suitably defined evolving environment $\xi$ on $\Z$ for which we can calculate an asymptotic expression for the speed using Lemma~\ref{lem:speedinonedim} and a time change.

In the following, with a slight abuse of notation, we identify $\Z$ with the $\eone$-axis of $\Z^d$, and for $x\in \Z$, we write $x+\eone$ for the edge $(x,x+1)$.

The purpose of this coupling is to allow us to transfer the  dynamics on $\Z^d$ to a one-dimensional system, which we can study explicitly. More precisely, we consider three Poisson processes  
$\mathcal{P}^1,\mathcal{P}^2,\mathcal{P}^3$. The processes $\mathcal{P}^1$ and  $\mathcal{P}^2$ correspond to jumps in the $ \pm \eone$ direction, while $\mathcal{P}^3$ corresponds to all other directions of $\Z^d$. At the points of $\mathcal{P}^1$ and $\mathcal{P}^2$, we let the two processes $X$ and $Y$ evolve together, and we update their respective infected sets together.
We stop the coupling either at the first jump in direction $-\eone$ or the second time a point in $\mathcal{P}^3$ occurs, and continue the processes afterwards according to the correct marginal transition rates; see below. Our key observation is that as long as the edge to the right of the walker's current location has not been examined before, we can identify its state with the state of the edge on $\Z$ with the same $\eone$-coordinate. 
We make this rigorous in the last part of the definition below.


\begin{definition}[Coupling between $(X,\eta)$ and $(Y,\xi)$]\label{def:couplingxandy}
	\rm{
	Let $\til{\mu}=\mu+p(2d-2)Z_\lambda^{-1}$, where we recall that $Z_\lambda = e^{\lambda}+ e^{-\lambda} + 2d-2$.
 Both $X$ and~ $Y$ start from~$0$. We let the environment $\eta$ evolve according to dynamical percolation on $\Z^d$ with parameters $\mu,p$ and $\eta_0\sim \pi_p$. The edges to the left of $0$ in the environment $\xi$ update according to dynamical percolation with parameters $\til{\mu},p$.

 Let $\mathcal{P}^1, \cP^2$ and $\cP^3$ be three independent Poisson processes of parameters $e^{\lambda}  Z_\lambda^{-1}, e^{-\lambda} Z_\lambda^{-1}$ and $(2d-2)Z_\lambda^{-1}$,  respectively. At the points of $\cP^1$ (resp.\ $\cP^2$) both $X$ and $Y$ attempt a jump to the right (resp.\ to the left),  and we add the corresponding edges (more precisely the lowest numbered copies not in the infected sets as in Definition~\ref{def:RegenTimes}) to their respective infected sets.  Then we say that the two (copies of) edges are a match. At the points of $\cP^3$ the walk $X$ attempts a jump in one of the $2d-2$ directions other than $\eone$ and $-\eone$ chosen uniformly at random, and we add the corresponding copy of the edge to the infected set of $X$ only. 
 
 We now explain how to remove copies of edges from the infected sets: we pick a copy from the infected set of $X$ to be removed in the same way as in Definition~\ref{def:RegenTimes} (each copy is being picked at rate $\mu$) and we also remove its match if it exists from the infected set of $Y$. We then update the corresponding edges in $\eta$ and $\xi$ in the same way as in Definition~\ref{def:RegenTimes} (i.e.\ if the copies are of the form $e_{i,1}$ for some~$i$). 
 
 Below whenever we say that we stop the coupling, afterwards we  continue $(X,\eta)$ and $(Y,\xi)$ by letting them attempt jumps at the points of $\cP^1, \cP^2$ and $\cP^3$ (the latter only for $X$) and each edge copy of $Y$ in its infected set refreshes also at the points of an additional Poisson process $\til{\cP}$ of parameter $p(2d-2)Z_\lambda^{-1}$. If a copy in the infected set of $Y$ refreshes according to this Poisson process, then we do not remove it from the infected set. However, if that copy is of the form $e_{i,1}$ for some $i$, then we update the state of its corresponding edge in $\xi$.
  
 Let $(T_i)$ be the jump times of~$\cP^3$ and let $S$ be the first point of $\cP^2$. We stop the coupling at time~$S\wedge T_2$.  For every edge $e$, we let $E(e)$ be the first time that the state of $e$ is examined by $Y$ and $C(e)$ be the first time the edge $e$ is crossed by $Y$. 
When $E(e)<T_1\wedge S$, then for times~$s\in [E(e), C(e)\wedge T_1\wedge S]$ we set $\xi_s(e)=\eta_s(e)$.  At time $T_1$, the walk $X$ attempts a jump in one of the $2d-2$ directions other than $\eone$ and $-\eone$ chosen uniformly at random. For each edge $e$ such that  $E(e)\in (T_1,T_2\wedge S)$ and for times $t\in [E(e),C(e)\wedge T_2\wedge S]$ we set $\xi_t(e)=\eta_t(X_t+\eone)$. During the time interval $(C(e)\wedge T_2\wedge S, T_2\wedge S)$, we refresh the edge $e$ in the environment $\xi$ also at the points of an additional independent Poisson process $\til{\cP}$ of parameter $p(2d-2)Z_\lambda^{-1}$. As already mentioned above, we stress again that these updates do not affect the infected set. 

We let $(\tau_i)$ be the successive times at which the infected set of $X$ becomes equal to the empty set. Then by the definition of the process $Y$ we see that also the infected set of $Y$ becomes empty at times $\tau_i$ for all $i$. 
  }
\end{definition}

 \begin{remark}
	\rm{
	We note that in the above coupling once an edge $e$ has been examined by $Y$, it then refreshes at rate $\til{\mu}$. Indeed, up until the first point of $\cP^3$, it updates at rate $\mu$. At time $T_1$, if the edge that $X$ examines is open, which happens with probability $p$, then the state of $e$ in $\xi$ is updated to the state of $X_{T_1}+\eone$, which is distributed according to $\mathrm{Ber}(p)$, as $X_{T_1}+\eone$ had not been examined before. Hence the rate at which $e$ updates is $p\cdot (2d-2)Z_\lambda^{-1}$, where the factor $p$ comes from the probability that the edge that $X$ examines at time $T_1$ is open and $(2d-2)Z_\lambda^{-1}$ is the rate of $\cP^3$. Using the sequence $(\tau_i)$ we see that the speed $v^Y$ of $Y$ is given by 
		\[
	v^Y(\lambda) = \frac{\estart{Y_{\tau_1}}{} }{\estart{\tau_1}{}}.
	\] 	
	
	}
\end{remark}

%
%

\begin{lemma}\label{pro:ApproximateLimitingSpeed} For all $p \in (0,1)$ and $\mu>0$, there exist constants $\lambda_0, c>0$, depending only on $\mu$, such that for all $\lambda \geq \lambda_0$,
\begin{equation}
\left|v^Y(\lambda)- v(\lambda)\right| \leq c \exp(-2\lambda).
\end{equation}
\end{lemma}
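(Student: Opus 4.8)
The plan is to work under the coupling $\mathbf{P}$ of Definition~\ref{def:couplingxandy}, under which $(X,\eta)$ has its usual law $\mathbb{P}_\lambda$ and $X$, $Y$ share the regeneration time $\tau_1$, and to compare $X^1_{\tau_1}$ with $Y_{\tau_1}$. By Proposition~\ref{pro:RegenerationSpeed} we have $v(\lambda)=\mathbf{E}[X^1_{\tau_1}]/\mathbf{E}[\tau_1]$, while $v^Y(\lambda)=\mathbf{E}[Y_{\tau_1}]/\mathbf{E}[\tau_1]$, and by Lemma~\ref{lem:ExpoTailsTau} the number $\mathbf{E}[\tau_1]=e^{1/\mu}$ is a fixed positive constant; so it suffices to show $\bigl|\mathbf{E}[X^1_{\tau_1}-Y_{\tau_1}]\bigr|\le c'\,e^{-2\lambda}$. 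Introduce the event $A:=\{S>\tau_1\}\cap\{T_2>\tau_1\}$ that no point of $\mathcal{P}^2$, and at most one point of $\mathcal{P}^3$, occurs before $\tau_1$; note that on $A$ the coupling has not been stopped by time $\tau_1$.

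\emph{Step 1.} On $A$ one has $X^1_{\tau_1}=Y_{\tau_1}$. This is read off from Definition~\ref{def:couplingxandy}: on $A$ neither walk attempts a $-\eone$ jump before $\tau_1$ (those occur only at points of $\mathcal{P}^2$), and along every edge $Y$ examines before $\tau_1$ the environment $\xi$ coincides with $\eta$ restricted to the $\eone$-axis — by the rule $\xi_s(e)=\eta_s(e)$ before the first point $T_1$ of $\mathcal{P}^3$, and by the rule $\xi_t(e)=\eta_t(X_t+\eone)$ on $(T_1,\tau_1]\subseteq(T_1,T_2\wedge S)$. Inducting over the successive points of $\mathcal{P}^1$ and using at each step that $X^1_t=Y_t$ (which identifies the edge $X_t+\eone$ seen by $X$ with the edge of the same $\eone$-coordinate examined by $Y$), every right-jump attempt succeeds for $X$ if and only if it does for $Y$, so $X^1_t=Y_t$ for all $t\le\tau_1$. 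Since $|X^1_{\tau_1}|$ and $|Y_{\tau_1}|$ are both at most the number $\cU_a(\tau_1)$ of jump attempts of $X$ by time $\tau_1$ (those of $Y$ forming a sub-collection), this gives
\[
\bigl|\mathbf{E}[X^1_{\tau_1}-Y_{\tau_1}]\bigr|=\bigl|\mathbf{E}\bigl[(X^1_{\tau_1}-Y_{\tau_1})\mathbf{1}_{A^c}\bigr]\bigr|\le 2\,\mathbf{E}\bigl[\cU_a(\tau_1)\,\mathbf{1}_{A^c}\bigr].
\]

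\emph{Step 2.} The remaining task is to bound $\mathbf{E}[\cU_a(\tau_1)\mathbf{1}_{A^c}]$ by a multiple of $e^{-2\lambda}$. The key point is that $\mathcal{P}^1\cup\mathcal{P}^2\cup\mathcal{P}^3$ is a rate-$1$ Poisson process whose points are independently labelled by which of the three processes they belong to, with probabilities $e^{\lambda}/Z_\lambda$, $e^{-\lambda}/Z_\lambda$ and $(2d-2)/Z_\lambda$, whereas the infected-set process $(|I_t|)_{t\ge0}$ — and therefore $\tau_1$ and $\cU_a(\tau_1)$ — depends only on the unlabelled jump times and on the independent removal clocks, hence is independent of the labels. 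Thus, conditionally on $\{\cU_a(\tau_1)=m\}$, the labels of the first $m$ jump attempts are i.i.d.\ with that law, and a union bound together with $Z_\lambda\ge e^{\lambda}$ gives
\[
\mathbf{P}\bigl(A^c\mid \cU_a(\tau_1)=m\bigr)\le m\,\frac{e^{-\lambda}}{Z_\lambda}+\binom{m}{2}\Bigl(\frac{2d-2}{Z_\lambda}\Bigr)^{\!2}\le C_d\,m^2\,e^{-2\lambda}.
\]
Summing over $m$ and invoking Lemma~\ref{cor:exptailsofutauone} (whence $\mathbf{E}[\cU_a(\tau_1)^3]<\infty$, with a value depending only on $\mu$),
\[
\mathbf{E}\bigl[\cU_a(\tau_1)\,\mathbf{1}_{A^c}\bigr]=\sum_{m\ge1}m\,\mathbf{P}(\cU_a(\tau_1)=m)\,\mathbf{P}\bigl(A^c\mid \cU_a(\tau_1)=m\bigr)\le C_d\,e^{-2\lambda}\,\mathbf{E}[\cU_a(\tau_1)^3].
\]
Combining with Step~1 and dividing by $\mathbf{E}[\tau_1]$ yields the claim, with $c=2C_d\,\mathbf{E}[\cU_a(\tau_1)^3]\,e^{-1/\mu}$ depending only on $\mu$ (and on the fixed $d$); the bounds hold for every $\lambda\ge0$, so any $\lambda_0>0$ works.

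The main obstacle is Step~1: checking, from the somewhat intricate construction in Definition~\ref{def:couplingxandy}, that no discrepancy between $X^1$ and $Y$ can appear before the first $-\eone$ attempt or the second perpendicular attempt. The delicate point is that once $X$ has made its first perpendicular move it has left the $\eone$-axis, and it is then precisely the rule $\xi_t(e)=\eta_t(X_t+\eone)$ — not $\xi_t(e)=\eta_t(e)$ — that keeps the right-jump mechanisms of $X$ and $Y$ synchronised, which in turn relies on the inductive identity $X^1_t=Y_t$ to match up the relevant edges. Everything in Step~2 is a routine moment estimate powered by the exponential tails of $\cU_a(\tau_1)$; the one quantitatively essential ingredient there is the independence of the direction labels from $(|I_t|)_{t\ge0}$, which is exactly what turns the naive $O(e^{-\lambda})$ bound into the required $O(e^{-2\lambda})$.
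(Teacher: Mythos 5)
Your proposal is correct and follows essentially the same route as the paper: bound $|v^Y(\lambda)-v(\lambda)|$ by $\E[\tau_1]^{-1}$ times $2\,\E[\cU_a(\tau_1)\1(\text{coupling stopped before }\tau_1)]$, then use the independence of the direction labels from $\cU_a(\tau_1)$ (binomial/union bound conditional on $\cU_a(\tau_1)=m$) together with the exponential tails from Lemma~\ref{cor:exptailsofutauone} to get the $\cO(e^{-2\lambda})$ bound. Your Step~1 merely spells out explicitly the identity $X^1_{\tau_1}=Y_{\tau_1}$ on the good event, which the paper leaves implicit in the construction of the coupling in Definition~\ref{def:couplingxandy}.
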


\begin{proof}[\bf Proof]
Recall that $S$ is the first point of $\cP^2$ and $(T_i)$ are the points of $\cP^3$. Let $A$ be the event that the coupling stops before time $\tau_1$, i.e.\
\[
A=\{S<\tau_1 \} \cup \{T_2<\tau_1\}.
\]
Then we have 
\[
|v^Y(\lambda) - v(\lambda)| \leq \frac{1}{\estart{\tau_1}{}} \cdot \estart{|X_{\tau_1}^1 - Y_{\tau_1}|\1(A)}{} 
\]
We write $\cU_a(t)$ for the total number of points of $\cP^1\cup \cP^2\cup\cP^3$ that have arrived before time~$t$. Then we obtain
\[
\Ex{|X_{\tau_1}^1 - Y_{{\tau}_1}|\1(A)} \leq 2 \, \Ex{\1(A) \cdot \cU_a(\tau_1)}.
\]
A key observation is that $\tau_1$ and $\cU_a(\tau_1)$ only depend on $\cP^1\cup \cP^2\cup \cP^3$ and the evolution of the size of the infected set, which increases at the points of the Poisson process $\cP^1\cup \cP^2\cup \cP^3$ and decreases at an independent rate~$\mu$.  
This together with the thinning property of Poisson processes yields that, conditional on $\cU_a(\tau_1)$, the numbers of points in $\cP^2[0,\tau_1]$ and $\cP^3[0,\tau_1]$ follow the binomial distribution with parameters $(\cU_a(\tau_1), e^{-\lambda}Z_\lambda^{-1})$ and $(\cU_a(\tau_1), (2d-2)Z_\lambda^{-1})$ respectively. Using this we then get 
\begin{align*}
	\Ex{\1(A) \cdot \cU_a(\tau_1)} \leq \Ex{(\cU_a(\tau_1))^2\cdot e^{-\lambda}Z_\lambda^{-1} + (\cU_a(\tau_1))^3\cdot (2d-2)^2 Z_{\lambda}^{-2}}.
\end{align*}
Since $Z_\lambda^{-1} = \cO(e^{-\lambda})$ and by Lemma~\ref{cor:exptailsofutauone}  there exists a positive constant $C_\mu$ such that $\Ex{(\cU_a(\tau_1))^3}\leq C_{\mu}<\infty$, it follows that 
\begin{align*}
	\Ex{\1(A) \cdot \cU_a(\tau_1)} \leq \cO(e^{-2\lambda})
	\end{align*}
with the implicit constants depending only on $\mu$, and hence this concludes the proof.
\end{proof}

\begin{proof}[\bf Proof of Proposition~\ref{pro:MonotoneSubsequence}]

Let $(\til{Y},\til{\xi})$ be a biased random walk on dynamical percolation on $\Z$ with parameters $\til{\mu},p$ that jumps to the right at rate $e^\lambda Z_\lambda^{-1}$ and to the left at rate $e^{-\lambda}Z_\lambda^{-1}$. Then the speed of $Y$ is the same as the speed of $\til{Y}$, since to determine it we only need to know the state of every edge after the first time the walk examines it. 

Let $\delta=\delta(\lambda)=(2d-2)Z_\lambda^{-1}$ and consider the process 
\[
(\overline{Y}_t,\overline{\xi}_t):= (\til{Y}_{t(1-\delta)^{-1}},\xi_{t(1-\delta)^{-1}}), \ \forall \ t\geq 0.
\]
Then $(\overline{Y},\overline{\xi})$ is a one-dimensional biased random walk in dynamical percolation with parameters $(p,\overline{\mu},\lambda)$, where
\[
\overline{\mu} : = \frac{\mu+ p\delta}{1-\delta}.
\]
Intuitively, $\overline{\mu}$ comes from adding an additional update rate $p\delta$ to the edges, and afterwards applying a time-change by a factor of $(1-\delta)$ to the process. 
We write $v^{\overline{Y}}$ for the speed of~$\overline{Y}$.
Let $Z$ be a random walk on dynamical percolation on $\Z$ with parameters $\overline{\mu}, p$ that only attempts jumps to the right at rate $1$. Using Lemma~\ref{lem:speedinonedim} we get that the speed of $Z$ is given by 
\[
v^Z = \frac{\overline{\mu}p}{1-p+\overline{\mu}}.
\] 
By~\eqref{eq:totallyasymmetric} we also get that for a positive constant $C'$ we have 
\begin{align*}
	\left|\frac{\overline{\mu} p }{1-p+\overline{\mu}} - {v}^{\overline{Y}}(\lambda)\right|\leq C' \exp(-2\lambda).
\end{align*}

Using now that $v^Y(\lambda) = (1-\delta)v^{\overline{Y}}(\lambda)$ and a straightforward calculation we finally conclude that for $\lambda$ sufficiently large
\begin{equation}\label{eq:SpeedRefined}
{v}^Y(\lambda) = \frac{\mu p}{1-p+\mu} -  \frac{(2d-2) p}{(1-p+\mu)^2}(\mu^2-p(1-p)) Z_{\lambda}^{-1}  + \mathcal{O}(e^{-2\lambda}),
\end{equation}
where the implicit constant in $\cO$ depends only on $\mu$ and $d$. 
This together with Lemma~\ref{pro:ApproximateLimitingSpeed} finishes the proof.
\end{proof}

\subsection{Asymptotic derivative of the speed}
\label{sec:LargeBiasLimit}

In this section we prove Lemma~\ref{lem:ApproximateDerivate} by constructing a coupling between two walks with different bias parameters. 

Let $\varepsilon>0$ and let $(X_t^{\lambda},\eta_t)_{t \geq 0 }$ and $(X_t^{\lambda+\varepsilon},\eta_t) _{t \geq 0 }$ be $\lambda$-biased  (respectively $(\lambda+\epsilon)$-biased) random walks on dynamical percolation in $\Z^d$ with parameters $\mu$ and $p$.

\begin{definition}[Coupling between $X^\lambda$ and $X^{\lambda+\epsilon}$]
	\rm{
	We start both walks from $0$ and we let them both attempt jumps at the points of a Poisson process $\cP=(\cP_t)_{t\geq 0}$ of rate 1. We also let both environments evolve together until the first very bad point defined below and afterwards we couple the environments by using the same rate $\mu$ Poisson process for the removal of copies of edges of their respective infected sets. 
	 
Whenever a clock of $\cP$ rings indicating the jump attempt at time $t$ of both walkers, we sample a random variable $U$ uniformly on $[0,1]$ and proceed as follows: 

\begin{itemize}
\item [(1)] If $U<(2d-2)/Z_{\lambda+\epsilon}$, then we let both walkers attempt a jump into one of the $2d-2$ directions different from $\eone$ and $-\eone$ chosen uniformly at random. 
\item [(2)] If $U\in [(2d-2)/Z_{\lambda+\epsilon}, (2d-2)/Z_\lambda]$, then we let the $X^\lambda$ walk attempt a jump into one of the $2d-2$ directions different from $\eone$ and $-\eone$ chosen uniformly at random, while we let the $X^{\lambda+\epsilon}$ walk attempt a jump in the $\eone$ direction. 
\item [(3)] If $U \in [(2d-2)/Z_\lambda, (2d-2)/Z_\lambda+ e^{-\lambda-\epsilon}/Z_{\lambda+\epsilon}]$, then we let both walkers attempt a jump in the $-\eone$ direction. 
\item [(4)] If $U \in [(2d-2)/Z_\lambda+e^{-\lambda-\epsilon}/Z_{\lambda+\epsilon}, 1-e^\lambda/Z_\lambda]$, then we let the $X^\lambda$ walk attempt a jump in the $-\eone$ direction, while we let the $X^{\lambda+\epsilon}$ walk attempt a jump in the $\eone$ direction.
\item [(5)] If $U>1-e^\lambda/Z_\lambda$, then we let both walkers attempt a jump in the $\eone$ direction.  
\end{itemize}
 }
\end{definition}

In the following, we let $(T_i)_{i\in \N}$ be the points of the Poisson process $(\mathcal{P}_t)_{t \geq 0}$ and we colour each point independently according to the outcome of the corresponding random variable $U$ in the above coupling. We say that a point is \textbf{good} if the corresponding random variable $U$ satisfies (5), we say that a point is \textbf{bad} if $U$ satisfies (1) or (3), and we say that a point is \textbf{very bad} if $U$ satisfies (2) or (4).  
Notice that good, bad, and very bad points are again independent Poisson point processes of intensities $q_{\textup{g}}:=e^{\lambda}Z_{\lambda}^{-1}$ for good points, $q_{\textup{b}}:=(2d-2+e^{-\lambda-\varepsilon})/Z_{\lambda+\epsilon}$ for bad points, and 
\begin{align}\label{eq:defqvb}
q_{\textup{vb}}:=  \frac{e^{\lambda+\epsilon}}{Z_{\lambda+\epsilon}} - \frac{e^{\lambda}}{Z_\lambda} >0
\end{align} 
for very bad points. Note that there exist constants $c_1,c_2,c_3>0$ and $\lambda_0>0$ so that for all $\epsilon\in (0,1)$ and $\lambda \geq \lambda_0$ we get
\begin{equation}\label{eq:qbTaylorExpansion}
|q_{\textup{b}} - (2d-2)\exp(-\lambda-\epsilon)| \leq c_1  \exp(-2\lambda)
\end{equation}
and
\begin{equation}\label{eq:qvbTaylorExpansion}
| q_{\textup{vb}} -  \varepsilon(2d-2) \exp(-\lambda) | \leq c_2 \varepsilon \exp(-2\lambda) + c_3\varepsilon^2 \exp(-\lambda).
\end{equation}
 Moreover, note that the above coupling between the two random walkers ensures that they stay together until the first very bad point and both infected sets have the same size at all times. Therefore, both processes have the same sequence of regeneration times. We let $\tau_1$ be their first regeneration time.  

We write $\cU_a(t)$ for the number of points of the Poisson process $\cP$ that have arrived by time $t$. 
Let $G$ be the event that there is no bad point up to time $\tau_1$ and for every $\ell\in \N$ let $V_\ell$ be the event that $T_\ell$ is the unique very bad point of $\cU_a(\tau_1)$. Let $R$ be the event that at the first very bad point the walk $X^\lambda$ attempts a move in one of $2d-2$ directions.  

We write for all $x\in \Z^d$
\begin{equation}
\normO{x}_1 := x \cdot \eone \, .
\end{equation}

We now explain the strategy of the proof. First we express the speed as 
\[
v(\lambda) = \lim_{\epsilon\to 0} \Ex{\tau_1}^{-1} \left( \E\Big[ \normO{X^{\lambda+\varepsilon}_{\tau_1}}_1- \normO{X^{\lambda}_{{\tau}_1}}_1\Big] + \E\Big[\normO{X^{\lambda}_{{\tau}_1}}_1\Big] \right) .
\]
We note that for each $\epsilon$ the quantity $\E\Big[ \normO{X^{\lambda+\varepsilon}_{\tau_1}}_1- \normO{X^{\lambda}_{{\tau}_1}}_1\Big]$ is non-zero if and only if there is a very bad point by time $\tau_1$. In the next lemma we show that conditional on $\cU_a(\tau_1)$ and on having a unique very bad point and no bad points up to time $\tau_1$, the expectation of $\normO{X^{\lambda+\varepsilon}_{\tau_1}}_1$ is independent of $\lambda$ and $\epsilon$. We also prove an analogous statement for $\normO{X^{\lambda}_{\tau_1}}_1$. 
Hence, the dependence on $\epsilon$ comes from the probability of the event of having a very bad point by time~$\tau_1$.

\begin{lemma}\label{lem:boundsongvellandr}
	There exists a positive constant $c=c_d$ so that the following holds. Let $p\in (0,1)$ and $\mu>0$. For all $k\in \N$ and $\ell\leq k$ we have 
	\begin{align}\label{eq:boundongcomandr}
		\prcond{G^c}{\cU_a(\tau_1)=k, V_\ell}{}\leq (k-1) \cdot q_{\mathrm{b}} \quad \text{ and } \quad \prcond{R^c}{\mathcal{U}_a(\tau_1)=k, V_\ell, G}{} \leq c e^{-\lambda}.
	\end{align}
	Moreover, there exist functions $f=f_{\mu,p}, g=g_{\mu,p}: \N\times \N\to \R_+$, which do not depend on $\lambda$ or $\varepsilon$, such that 
	\begin{align*}
		\econd{\normO{X^{\lambda+\varepsilon}_{\tau_1}}_1}{\cU_a(\tau_1)=k, V_\ell, G} = f(k,\ell)  \text{ and }
  \econd{\normO{X^{\lambda}_{\tau_1}}_1}{\cU_a(\tau_1)=k,V_\ell, G,R}=g(k,\ell).
	\end{align*}
\end{lemma}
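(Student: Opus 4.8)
The plan is to exploit the decoupling, already emphasised before the statement, between the \emph{colourless} randomness and the colouring of the points of $\cP$. Write $\Xi$ for the data consisting of the points of the driving Poisson process $\cP$, the rate-$\mu$ removal clocks of the copies in the (common-size) infected set, the initial environments and all Bernoulli$(p)$ refreshes, but \emph{not} the uniform variables $U$ that decide which of the five alternatives a jump attempt realises. The size of the infected set is a birth-and-death chain driven only by $\cP$ and the removal clocks, so $\tau_1$ and $\cU_a(\tau_1)$ are $\sigma(\Xi)$-measurable; moreover, conditionally on $\Xi$ and on $\{\cU_a(\tau_1)=k\}$, the colours of the points $T_1,\dots,T_k$ are i.i.d.\ with $\mathbb P(\mathrm{good})=q_{\mathrm g}$, $\mathbb P(\mathrm{bad})=q_{\mathrm b}$, $\mathbb P(\mathrm{very\ bad})=q_{\mathrm{vb}}$, and within ``very bad'' the alternatives $(2)$ and $(4)$ are selected according to the fixed lengths of the corresponding $U$-intervals. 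I will use this structure in every part.

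For the two probability estimates I would argue directly with the i.i.d.\ colours. Conditionally on $\{\cU_a(\tau_1)=k\}$ and $V_\ell$, each of the $k-1$ points other than $T_\ell$ is good or bad with conditional probabilities $q_{\mathrm g}/(q_{\mathrm g}+q_{\mathrm b})$ and $q_{\mathrm b}/(q_{\mathrm g}+q_{\mathrm b})$, so
\[
\prcond{G^c}{\cU_a(\tau_1)=k,V_\ell}{}=1-\Big(\tfrac{q_{\mathrm g}}{q_{\mathrm g}+q_{\mathrm b}}\Big)^{k-1}\le (k-1)\,\frac{q_{\mathrm b}}{q_{\mathrm g}+q_{\mathrm b}}=\frac{(k-1)q_{\mathrm b}}{1-q_{\mathrm{vb}}},
\]
using $1-x^{k-1}\le(k-1)(1-x)$, and for $\lambda\ge\lambda_0$ the prefactor $(1-q_{\mathrm{vb}})^{-1}$ is bounded by \eqref{eq:qvbTaylorExpansion}, which yields the first bound. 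For the second, conditionally on $T_\ell$ being the very bad point, whether it realises alternative $(2)$ or $(4)$ is independent of $G$ and of the other colours and is chosen proportionally to the lengths $(2d-2)(Z_\lambda^{-1}-Z_{\lambda+\epsilon}^{-1})$ and $e^{-\lambda}Z_\lambda^{-1}-e^{-\lambda-\epsilon}Z_{\lambda+\epsilon}^{-1}$ respectively; a short estimate shows that, for $d\ge2$ and uniformly in $\epsilon\in(0,1)$, the alternative-$(4)$ length is smaller than the alternative-$(2)$ length by a factor of order $e^{-\lambda}$, whence $\prcond{R^c}{\cU_a(\tau_1)=k,V_\ell,G}{}\le c\,e^{-\lambda}$.

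For the two conditional expectations the point is that after conditioning the walk's itinerary becomes free of $\lambda$ and $\epsilon$. On $G\cap V_\ell$ all points except $T_\ell$ realise alternative $(5)$, and $T_\ell$ realises $(2)$ or $(4)$; in every one of these the walker $X^{\lambda+\epsilon}$ attempts a step in the $+\eone$ direction, so on $G\cap V_\ell$ the sequence of attempted directions of $X^{\lambda+\epsilon}$ is deterministic and $(X^{\lambda+\epsilon}_t)_{t\le\tau_1}$ is a function of $\Xi$ alone. Since on $\{\cU_a(\tau_1)=k\}$ one has $\mathbb P(G\cap V_\ell\mid\Xi)=q_{\mathrm g}^{\,k-1}q_{\mathrm{vb}}$, which is constant in $\Xi$, the conditional law of $\Xi$ given $\{\cU_a(\tau_1)=k\}\cap G\cap V_\ell$ coincides with its law given $\{\cU_a(\tau_1)=k\}$, hence does not depend on $\lambda$ or $\epsilon$; this produces the required $f(k,\ell)$. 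For $g$ I would repeat the argument for $X^{\lambda}$ after additionally conditioning on $R$: on $G\cap V_\ell\cap R$ the walk $X^{\lambda}$ attempts $+\eone$ at every point except $T_\ell$, where it attempts a uniformly chosen one of the $2d-2$ transverse directions (an extra choice independent of $\Xi$ and free of $\lambda,\epsilon$), and $\mathbb P(G\cap V_\ell\cap R\mid\Xi)=q_{\mathrm g}^{\,k-1}\cdot(\text{length of the alternative-}(2)\text{ interval})$ is again constant on $\{\cU_a(\tau_1)=k\}$, so $\econd{\normO{X^{\lambda}_{\tau_1}}_1}{\cU_a(\tau_1)=k,V_\ell,G,R}=:g(k,\ell)$ does not depend on $\lambda,\epsilon$.

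The step I expect to require the most care is precisely this decoupling: one must verify that conditioning on the colour-dependent events $V_\ell$, $G$, $R$ does not disturb the conditional law of the colourless data $\Xi$ — in particular the distribution of the environment states the walk encounters up to $\tau_1$ — and leaves it independent of $\lambda$ and $\epsilon$. This hinges on $\cU_a(\tau_1)$ being colour-blind, so that $\mathbb P(V_\ell\cap G\mid\Xi)$ and $\mathbb P(V_\ell\cap G\cap R\mid\Xi)$ are constant on each level set $\{\cU_a(\tau_1)=k\}$; granting that, the rest is bookkeeping with the interval lengths appearing in the definition of the coupling.
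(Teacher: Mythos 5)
Your proposal is correct and follows essentially the same route as the paper's proof: the independence of $\cU_a(\tau_1)$ (indeed of all the colourless driving data) from the colouring of the points of $\cP$, a union bound over the $k-1$ points other than $T_\ell$ for the $G^c$ estimate, the ratio of the alternative-(4) to alternative-(2) interval lengths (equivalently, the paper's explicit ratio with denominator $q_{\mathrm{vb}}$) for the $R^c$ estimate, and the observation that on the conditioning events the attempted directions of the walks are deterministic, so that the conditional law of $((T_1,\dots,T_k),\eta)$ coincides with its law given only $\{\cU_a(\tau_1)=k\}$ and the conditional expectations depend only on $(k,\ell)$, $\mu$, $p$. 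The only cosmetic difference is the factor $(1-q_{\mathrm{vb}})^{-1}$ you carry in the first bound, which the paper silently absorbs by asserting conditional probability $q_{\mathrm{b}}$ per point; this is immaterial for the way the bound is used.
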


\begin{proof}[\bf Proof]
Since the distribution of $\cU_a(\tau_1)$ is independent of the colouring of the Poisson process $\cP$, it follows that conditionally on $\cU_a(\tau_1)=k$ and $V_\ell$, every point $T_i$ for $i\leq k$ with $i\neq \ell$ has probability $q_{\mathrm{b}}$ of being a bad point. Using this together with a union bound we deduce
\[
\prcond{G^c}{\cU_a(\tau_1)=k, V_\ell}{}\leq (k-1) \cdot q_{\mathrm{b}}.
\]
Using again the independence between $\cU_a(\tau_1)$ and the colouring, we obtain 
\[
	\prcond{R^c}{\cU_a(\tau_1)=k, V_\ell, G}{} =\frac{1-e^\lambda Z_\lambda^{-1} - (2d-2)Z_\lambda^{-1}-e^{-\lambda-\epsilon}Z_{\lambda+\epsilon}^{-1}}{q_{\mathrm{vb}}}\leq ce^{-\lambda}
	\] 
	for a suitable choice of $c$, completing the proof of \eqref{eq:boundongcomandr}.
	Recall that $(T_i)$ are the points of $\cP$.
We notice that on the event $ \{\cU_a(\tau_1)=k\}\cap V_\ell\cap G\cap R$ we can write
\begin{align*}
	|X_{\tau_1}^{\lambda+\epsilon}|_1 = \sum_{i=1}^{k}\1(\eta_{T_i}(X^{\lambda+\epsilon}_{T_i-}, X^{\lambda+\epsilon}_{T_i-}+\eone)=1)
\end{align*}
and
\begin{align*}
 |X_{\tau_1}^{\lambda}|_1 = \sum_{\substack{i=1\\ i\neq \ell}}^{k}\1(\eta_{T_i}(X^\lambda_{T_i-}, X^\lambda_{T_i-}+\eone)=1).
\end{align*}
Using the independence between the Poisson process $\cP=(T_i)$ and the colouring of each point as good, bad or very bad together with the definition of the regeneration time $\tau_1$ which is independent of the colouring of the process $\cP$ (because even if we examine the same edge multiple times we still add a copy of it to the infected set), we see that  
\begin{align*}
	\cL{((T_1,\ldots, T_k), \eta\, \vert \  \cU_a(\tau_1)=k, V_\ell, G, R)}{} = \cL{((T_1,\ldots, T_k), \eta\, \vert \  \cU_a(\tau_1)=k)}{}. 
\end{align*}
In particular, this shows that the conditional law of $((T_1,\ldots, T_k), \eta)$ given $\cU_a(\tau_1)=k,  V_\ell, G, R$ is independent of~$\lambda$. We note that under this conditioning, $X^{\lambda+\epsilon}$ becomes a walk that only attempts jumps to the right at the times $T_1,\ldots, T_k$ and $X^\lambda$ attempts jumps to the right at the times $T_i$ for $i\leq k$ and $i\neq \ell$ and attempts a jump to one of $2d-2$ directions at time $T_\ell$. 
Therefore, we deduce that there exist functions $f=f_{\mu,p}$ and $g=g_{\mu,p}$ independent of $\lambda$ and $\epsilon$ so that 
\begin{align*}
	\econd{\normO{X^{\lambda+\varepsilon}_{\tau_1}}_1}{G, \ \cU_a(\tau_1)=k, V_\ell} = f(k,\ell) 
\end{align*}
and
\begin{align*}
\econd{\normO{X^{\lambda}_{\tau_1}}_1}{G, \ \cU_a(\tau_1)=k, V_\ell, R} = g(k,\ell) 
	\end{align*}
and this concludes the proof.
\end{proof}

We are now ready to prove Lemma~\ref{lem:ApproximateDerivate}. 

\begin{proof}[\bf Proof of Lemma~\ref{lem:ApproximateDerivate}]

We start the proof by recalling from Proposition~\ref{pro:RegenerationSpeed} that
\begin{equation*}
v(\lambda+\varepsilon)-v(\lambda) = \E[\tau_1]^{-1}\E\Big[ \normO{X^{\lambda+\varepsilon}_{\tau_1}}_1- \normO{X^{\lambda}_{{\tau}_1}}_1\Big].
\end{equation*}
Let $A_\epsilon$ be the event that there exists a very bad point before $\tau_1$. Then we have 
\begin{equation}\label{eq:speedequation}
v(\lambda+\varepsilon)-v(\lambda)=\E[\tau_1]^{-1}\E\Big[ \normO{X^{\lambda+\varepsilon}_{\tau_1}}_1- \normO{X^{\lambda}_{{\tau}_1}}_1  \, \Big| \, A_{\varepsilon}\Big]\P(A_{\varepsilon}) ,
\end{equation}  noting that on the complement of the event $A_{\varepsilon}$, the positions of the two walkers agree and hence the contribution to the expectation vanishes. 
Recall that $\cU_a(t)$ stands for the number of points of the Poisson process $\cP$ of rate $1$ up to time $t$. 
Since the assignment of good/bad/very bad points to the points of the Poisson process is independent of the value of $\tau_1$, we get 
\begin{align*}
	\pr{A_\epsilon} = \Ex{1-(1-q_{\textup{vb}})^{\cU_a(\tau_1)}}.
\end{align*}
Since $1-(1-q_{\textup{vb}})^{\cU_a(\tau_1)}\leq q_{\textup{vb}}\cdot \cU_a(\tau_1)$ by the dominated convergence theorem and L'H\^{o}pital's rule, recalling the approximation of $q_{\mathrm{vb}}$ from~\eqref{eq:qvbTaylorExpansion}, we obtain
\begin{align}\label{eq:limtaepsovereps}
\begin{split}
\lim_{\epsilon\to 0}	\frac{\pr{A_\epsilon}}{\epsilon} &= \Ex{\lim_{\epsilon\to 0} \frac{1}{\epsilon} \left(1-\Big(1- \varepsilon(2d-2) \exp(-\lambda) + \cO(\varepsilon e^{-2\lambda}+\varepsilon^2 e^{-\lambda})\Big)^{\cU_a(\tau_1)}\right)} \\
&= (2d-2)e^{-\lambda} \cdot \Ex{\cU_a(\tau_1)} + \mathcal{O}(e^{-2\lambda}), 
\end{split}
\end{align}
where the implicit constant only depends on $\mu$ and $d$.
We next prove that there exists a positive constant $\til{C}_{\mu,p,d}$ depending only on $\mu$, $p$ and $d$ such that 
\begin{align*}
	\lim_{\epsilon\to 0}\E\Big[ \normO{X^{\lambda+\varepsilon}_{\tau_1}}_1- \normO{X^{\lambda}_{{\tau}_1}}_1  \, \Big| \, A_{\varepsilon}\Big] = \til{C}_{\mu,p,d} + \cO( e^{-\lambda}),
\end{align*}
where the implicit constant in $\cO$ depends only on $\mu$ and $d$.
We define $\til{A}_\epsilon$ to be the event that there is a unique very bad point up to time $\tau_1$. 
First, we note that 
\begin{align}
	\prcond{\til{A}_\epsilon}{A_\epsilon}{} = \frac{\Ex{\cU_a(\tau_1)\cdot q_{\textup{vb}}\cdot (1-q_{\textup{vb}})^{\cU_a(\tau_1)-1}}}{\Ex{1-(1-q_{\textup{vb}})^{\cU_a(\tau_1)}}},
\end{align}
and using similar arguments as above we get that 
\begin{align}\label{eq:condagivenatil}
\lim_{\epsilon\to 0} \prcond{\til{A}_\epsilon}{A_\epsilon}{} = 1.
\end{align}
We now have 
\begin{align}\label{eq:xtauoneaeps}
	\econd{\normO{X^{\lambda+\varepsilon}_{\tau_1}}_1}{A_\epsilon} = \econd{\normO{X^{\lambda+\varepsilon}_{\tau_1}}_1}{\til{A}_\epsilon} \prcond{\til{A}_\epsilon}{A_\epsilon}{} +\econd{\normO{X^{\lambda+\varepsilon}_{\tau_1}}_1}{\til{A}^c_\epsilon \cap A_\epsilon} \prcond{\til{A}^c_\epsilon}{A_\epsilon}{}
\end{align}
and similarly for $X^{\lambda}_{\tau_1}$. As $|X_{\tau_1}^{\lambda+\epsilon}|\leq\cU_a(\tau_1)$, we get similarly as above
\begin{align*}
	&\limsup_{\epsilon\to 0}\econd{\normO{X^{\lambda+\varepsilon}_{\tau_1}}_1}{\til{A}^c_\epsilon \cap A_\epsilon} \leq \lim_{\epsilon \to 0} \econd{\cU_a(\tau_1)}{\til{A}^c_\epsilon \cap A_\epsilon} \\
	&=\lim_{\epsilon \to 0}  \frac{\Ex{\cU_a(\tau_1) \left(1 - (1-q_{\textup{vb}})^{\cU_a(\tau_1)} - \cU_a(\tau_1)q_{\textup{vb}}(1-q_{\textup{vb}})^{\cU_a(\tau_1)-1} \right)}}{\Ex{1 - (1-q_{\textup{vb}})^{\cU_a(\tau_1)} - \cU_a(\tau_1)q_{\textup{vb}}(1-q_{\textup{vb}})^{\cU_a(\tau_1)-1}}}.
\end{align*}
Applying the dominated convergence theorem and L'H\^{o}pital's rule and using that $q_{\textup{vb}}\to 0$ as $\epsilon\to 0$, we obtain
that this last limit is equal to 
\[
\lim_{\epsilon \to 0}  \frac{\Ex{\cU_a(\tau_1) \left(1 - (1-q_{\textup{vb}})^{\cU_a(\tau_1)} - \cU_a(\tau_1)q_{\textup{vb}}(1-q_{\textup{vb}})^{\cU_a(\tau_1)-1} \right)}}{\Ex{1 - (1-q_{\textup{vb}})^{\cU_a(\tau_1)} - \cU_a(\tau_1)q_{\textup{vb}}(1-q_{\textup{vb}})^{\cU_a(\tau_1)-1}}} = \frac{\Ex{(\cU_a(\tau_1))^2(\cU_a(\tau_1)-1)}}{\Ex{\cU_a(\tau_1)(\cU_a(\tau_1)-1)}}.
\]
Since $\Ex{\cU_a(\tau_1)}>1$ and using that $\cU_a(\tau_1)$ has exponential tails by Lemma~\ref{cor:exptailsofutauone} together with~\eqref{eq:condagivenatil} gives that the second term appearing in the sum in~\eqref{eq:xtauoneaeps} converges to $0$ as $\epsilon\to 0$. For the first expectation appearing on the right hand side of~\eqref{eq:xtauoneaeps} we have 
 \begin{align}\label{eq:conditiononatil}
	\econd{\normO{X^{\lambda+\varepsilon}_{\tau_1}}_1}{\til{A}_\epsilon}  = \sum_{k}\sum_{\ell\leq k} \econd{\normO{X^{\lambda+\varepsilon}_{\tau_1}}_1}{\cU_a(\tau_1)=k, V_\ell} \prcond{\cU_a(\tau_1)=k, V_\ell}{\til{A}_\epsilon}{}  
\end{align}
and similarly for $X^{\lambda}_{\tau_1}$. Note that for any random integrable variable $Y$ and any event $H$, 
\begin{equation}
\E[Y] = \E[Y | H ] + (\E[Y | H^{c}] - \E[Y | H] )\P(H^{c}) . 
\end{equation}
Thus, for each $k$ and $\ell\leq k$ we have 
\begin{align*}
	&\econd{\normO{X^{\lambda+\varepsilon}_{\tau_1}}_1}{\cU_a(\tau_1)=k, V_\ell} = \econd{\normO{X^{\lambda+\varepsilon}_{\tau_1}}_1}{\cU_a(\tau_1)=k, V_\ell, G}  \\&+ \left( \econd{\normO{X^{\lambda+\varepsilon}_{\tau_1}}_1}{\cU_a(\tau_1)=k, V_\ell, G^c} -\econd{\normO{X^{\lambda+\varepsilon}_{\tau_1}}_1}{\cU_a(\tau_1)=k, V_\ell, G}\right)\prcond{G^c}{\cU_a(\tau_1)=k, V_\ell}{}. 
\end{align*}
Let us remark that in the case of $X^\lambda$ we also add the event $R$ to the intersection above. 
Using again the obvious bound $\normO{X^{\lambda+\varepsilon}_{\tau_1}}_1 \leq \cU_a(\tau_1)$, all four statements of Lemma~\ref{lem:boundsongvellandr} and  equations~\eqref{eq:qbTaylorExpansion} and~\eqref{eq:qvbTaylorExpansion} we get 
\begin{align*}
	\econd{\normO{X^{\lambda+\varepsilon}_{\tau_1}}_1}{\cU_a(\tau_1)=k, V_\ell} &= f(k,\ell) + \cO(k^2\cdot e^{-\lambda}) \quad \text{ and} \\
	\econd{\normO{X^{\lambda}_{\tau_1}}_1}{\cU_a(\tau_1)=k, V_\ell} &= g(k,\ell) + \cO(k^2\cdot e^{-\lambda}),
	\end{align*}	
	where the implicit constants in the terms $\cO$ above only depend on $\mu$ and $d$.
	Inserting these back into~\eqref{eq:conditiononatil} we deduce
	\begin{align}\label{eq:deduceafterpluggingin}
	\econd{\normO{X^{\lambda+\varepsilon}_{\tau_1}}_1}{\til{A}_\epsilon}  = \sum_k \sum_{\ell\leq k}  (f(k,\ell) + \cO(k^2\cdot e^{-\lambda}))\prcond{\cU_a(\tau_1)=k, V_\ell}{\til{A}_\epsilon}{} 
	\end{align}
	and similarly for $X^\lambda$. 
	Using again the independence between $\cU_a(\tau_1)$ and the colouring, we  have 
\begin{align*}
\prcond{\cU_a(\tau_1)=k, V_\ell}{\til{A}_\epsilon}{}	 = \frac{\pr{\cU_a(\tau_1)=k} \cdot q_{\textup{vb}}\cdot (1-q_{\textup{vb}})^{k-1}}{\Ex{\cU_a(\tau_1) \cdot q_{\textup{vb}}\cdot (1-q_{\textup{vb}})^{\cU_a(\tau_1)-1}}},
\end{align*}
and hence since $q_{\textup{vb}}\to 0$ as $\epsilon \to 0$ by~\eqref{eq:qvbTaylorExpansion}, we deduce
\begin{align}\label{eq:takelimaeps}
	\lim_{\epsilon\to 0}\prcond{\cU_a(\tau_1)=k, V_\ell}{\til{A}_\epsilon}{} =\frac{\pr{\cU_a(\tau_1)=k}}{\Ex{\cU_a(\tau_1)}}. 
\end{align}
Using again the obvious bound $|X_{\tau_1}^{\lambda+\epsilon}|_1\leq \cU_a(\tau_1)$, and hence also that $f(k,\ell)\leq k$, inserting~\eqref{eq:deduceafterpluggingin} into~\eqref{eq:xtauoneaeps} and using the dominated convergence theorem we can take the limit as $\epsilon\to 0$ and use~\eqref{eq:condagivenatil} and~\eqref{eq:takelimaeps} to obtain
\begin{align*}
	\lim_{\epsilon\to 0} \econd{\normO{X^{\lambda+\varepsilon}_{\tau_1}}_1}{A_\epsilon} &= \sum_{k}\sum_{\ell\leq k} f(k,\ell) \cdot \frac{\pr{\cU_a(\tau_1)=k}}{\Ex{\cU_a(\tau_1)}} +\cO\left(e^{-\lambda}\cdot \sum_{k} k^3\cdot \frac{\pr{\cU_a(\tau_1)=k}}{\Ex{\cU_a(\tau_1)}} \right)\\
	&=\sum_{k}\sum_{\ell\leq k} f(k,\ell) \cdot \frac{\pr{\cU_a(\tau_1)=k}}{\Ex{\cU_a(\tau_1)}} + \cO(e^{-\lambda}),
	\end{align*}
where the implicit constant in $\cO$ only depends on $\mu$ and $d$ and where for the last equality we used that $\cU_a(\tau_1)$ has exponential tails by Lemma~\ref{cor:exptailsofutauone} again, and hence a finite third moment. The analogous equality holds for $X^\lambda$ with $f$ replaced by $g$. Therefore, these together with~\eqref{eq:limtaepsovereps} and~\eqref{eq:speedequation} imply that 
\begin{align*}
	\lim_{\epsilon\to 0} \frac{v(\lambda+\epsilon)-v(\lambda)}{\epsilon} 
\end{align*}
\begin{align*}= (2d-2)\cdot\frac{\Ex{\cU_a(\tau_1)}}{\Ex{\tau_1}}\cdot \sum_{k}\sum_{\ell\leq k} (f(k,\ell)-g(k,\ell)) \cdot \frac{\pr{\cU_a(\tau_1)=k}}{\Ex{\cU_a(\tau_1)}}\cdot e^{-\lambda} +\cO(e^{-2\lambda}).
\end{align*}
This now finishes the proof as $f$ and $g$ are functions that only depend on $\mu$ and $p$ and not on $\lambda$, while the implicit constant in $\cO$ depends only on $\mu$ and $d$. 
\end{proof}

\section{Strict monotonicity of the speed for large $\mu$ or  $p$ close to $1$}  \label{sec:SpeedRWTwoDim}

As already mentioned in the introduction and as we saw in Section~\ref{sec:SpeedRWOneDim}, for $d=1$, the function $\lambda \mapsto v(\lambda)$ is strictly increasing for any fixed choice of the percolation parameters $p\in (0,1]$ and $\mu>0$ due to a coupling argument. Let us emphasise that this argument cannot be extended for $d\geq 2$, as Theorem~\ref{thm:mainTwoDim} demonstrates. However, we identify in the following  two regimes of parameters $\mu$ and $p$ in $d\geq 2$ dimensions, where the speed is strictly increasing for all $\lambda>0$.

Recall the function $f(\lambda)$ from~\eqref{eq:SumRepresenationfAlpha} as well as $Z_\lambda$ from \eqref{eq:defz} and $Z'_{\lambda}=e^{\lambda}-e^{-\lambda}$, and $p_0$ from \eqref{eq:ProbabilityUnbiased}. For $k_a,\ell_a,k,\ell\in \N_0$ and $m \in \N$, we write 
\begin{equation}\label{eq:f5}
f_{k_a,\ell_a,k,\ell,m}(\lambda):=(k-\ell)e^{\lambda (k_a-\ell_a)} \left(\frac{2d}{Z_\lambda} \right)^m \left( k_a-\ell_a -m\cdot \frac{Z_\lambda'}{Z_\lambda} \right) p_0(k_a,\ell_a,k,\ell,m)
\end{equation}
and recall from~\eqref{eq:derivativeoff} that 
\[
f'(\lambda) = \sum_{m\in \N} \sum_{\substack{k_a+\ell_a \leq  m \\  k\leq k_a, \ell \leq \ell_a}} f_{k_a,\ell_a,k,\ell,m}(\lambda).
\]

\begin{proposition}\label{pro:MonotoneMuLarge} Fix $p\in (0,1)$. There exists some constant $\tilde{\mu}=\tilde{\mu}(p)>0$ such that for all $\mu>\tilde{\mu}$, we have that $\lambda \mapsto v_{\mu,p}(\lambda)$ is strictly increasing in $\lambda>0$.
\end{proposition}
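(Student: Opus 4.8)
The plan is to show that $f'(\lambda)>0$ for every $\lambda>0$ as soon as $\mu$ is large enough. Since $f(\lambda)=\estart{X_{\tau_1}^1}{\lambda}$ and, by Proposition~\ref{pro:RegenerationSpeed} and Lemma~\ref{lem:ExpoTailsTau}, $v_{\mu,p}(\lambda)=f(\lambda)/\estart{\tau_1}{}$ with $\estart{\tau_1}{}=e^{1/\mu}$ independent of $\lambda$, this yields $v'_{\mu,p}(\lambda)>0$ and hence the claim. By~\eqref{eq:derivativeoff} (equivalently Lemma~\ref{lem:Differentiability}),
\[
f'(\lambda)=\estart{X_{\tau_1}^1\,W_\lambda}{\lambda},\qquad W_\lambda:=(R_a-L_a)-\frac{Z_\lambda'}{Z_\lambda}\,\cU_a(\tau_1).
\]
I will use three facts already exploited in the paper: the law of $\cU_a(\tau_1)$ under $\P_\lambda$ does not depend on $\lambda$; the birth–and–death chain $(|I_t|)_{t\ge0}$ gives $\prstart{\cU_a(\tau_1)=1}{\lambda}=\mu/(\mu+1)$; and, conditionally on $\cU_a(\tau_1)=m$, the $m$ attempted directions are i.i.d., each being $+\eone$ with probability $e^{\lambda}/Z_\lambda$, $-\eone$ with probability $e^{-\lambda}/Z_\lambda$, and a fixed transverse direction with probability $1/Z_\lambda$. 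Accordingly I split $f'(\lambda)=\estart{X_{\tau_1}^1W_\lambda\1(\cU_a(\tau_1)=1)}{\lambda}+\estart{X_{\tau_1}^1W_\lambda\1(\cU_a(\tau_1)\ge2)}{\lambda}$.

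On $\{\cU_a(\tau_1)=1\}$ the walk makes a single attempted jump: if it is along $\pm\eone$ and the (not previously examined, hence $\mathrm{Ber}(p)$) edge is open then $X_{\tau_1}^1=\pm1$ and $W_\lambda=\pm1-Z_\lambda'/Z_\lambda$, and otherwise $X_{\tau_1}^1=0$. A short computation then gives
\[
\estart{X_{\tau_1}^1W_\lambda\1(\cU_a(\tau_1)=1)}{\lambda}=\frac{\mu p}{\mu+1}\cdot\frac{4+(2d-2)(e^{\lambda}+e^{-\lambda})}{Z_\lambda^{2}}>0.
\]
Moreover $\lambda\mapsto e^{\lambda}\bigl(4+(2d-2)(e^{\lambda}+e^{-\lambda})\bigr)Z_\lambda^{-2}$ is continuous on $[0,\infty)$ and tends to $2d-2>0$ as $\lambda\to\infty$ (here is where $d\ge2$ enters), hence is bounded below by a constant $c_d>0$ depending only on $d$, so the first term is at least $\tfrac{\mu p}{\mu+1}\,c_d\,e^{-\lambda}$.

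For the remainder I bound $|X_{\tau_1}^1|\le R_a+L_a$ and, writing $O_a:=\cU_a(\tau_1)-R_a-L_a$ for the number of attempted transverse jumps, decompose
\[
W_\lambda=R_a\Bigl(1-\tfrac{Z_\lambda'}{Z_\lambda}\Bigr)-L_a\Bigl(1+\tfrac{Z_\lambda'}{Z_\lambda}\Bigr)-O_a\,\tfrac{Z_\lambda'}{Z_\lambda},
\]
so that $|X_{\tau_1}^1W_\lambda|\le(R_a+L_a)\bigl(R_aA'+L_aB'+O_aC'\bigr)$ with $A'=1-Z_\lambda'/Z_\lambda$, $B'=1+Z_\lambda'/Z_\lambda$, $C'=Z_\lambda'/Z_\lambda$. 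The decisive point is that $A'=(2e^{-\lambda}+2d-2)/Z_\lambda=\cO(e^{-\lambda})$, while $B',C'=\cO(1)$ but in the six monomials of the product $B'$ always comes attached to an $L_a$ and $C'$ always to an $O_a$, whose conditional expectations carry the missing factor: $\escond{L_a}{\cU_a(\tau_1)=m}{\lambda}=m\,e^{-\lambda}/Z_\lambda=\cO(m\,e^{-2\lambda})$ and $\escond{O_a}{\cU_a(\tau_1)=m}{\lambda}=m\,(2d-2)/Z_\lambda=\cO(m\,e^{-\lambda})$, with analogous bounds for the second and mixed moments of $R_a,L_a,O_a$ given $\cU_a(\tau_1)=m$. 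Expanding the six monomials and estimating each termwise using these bounds together with $R_a,L_a,O_a\le\cU_a(\tau_1)$ and the $\lambda$-independence of the law of $\cU_a(\tau_1)$, one obtains
\[
\Bigl|\estart{X_{\tau_1}^1W_\lambda\1(\cU_a(\tau_1)\ge2)}{\lambda}\Bigr|\le C_d\,\estart{\cU_a(\tau_1)^2\,\1(\cU_a(\tau_1)\ge2)}{0}\;e^{-\lambda}
\]
for a constant $C_d$ depending only on $d$.

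Combining the last two displays, $f'(\lambda)\ge e^{-\lambda}\bigl(\tfrac{\mu p}{\mu+1}c_d-C_d\,\estart{\cU_a(\tau_1)^2\1(\cU_a(\tau_1)\ge2)}{0}\bigr)$ for all $\lambda>0$. Since $\prstart{\cU_a(\tau_1)\ge2}{0}=1/(\mu+1)$ and, by Lemma~\ref{cor:exptailsofutauone}, $\cU_a(\tau_1)$ has exponential tails whose rate tends to $\infty$ as $\mu\to\infty$, we get $\estart{\cU_a(\tau_1)^2\1(\cU_a(\tau_1)\ge2)}{0}\to0$ as $\mu\to\infty$, while $\tfrac{\mu p}{\mu+1}c_d\to p\,c_d>0$. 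Hence there is $\widetilde\mu=\widetilde\mu(p)$ (the dimension $d$ being fixed) such that for all $\mu>\widetilde\mu$ the bracket is strictly positive for every $\lambda>0$, so $f'(\lambda)>0$ and therefore $v'_{\mu,p}(\lambda)>0$ for all $\lambda>0$, i.e.\ $v_{\mu,p}$ is strictly increasing. The main obstacle is precisely the remainder estimate: a naive bound such as Cauchy–Schwarz applied to $\estart{|X_{\tau_1}^1|\,|W_\lambda|\1(\cU_a(\tau_1)\ge2)}{\lambda}$ only yields $\cO(e^{-\lambda/2})$, which is too weak since the positive main term is itself only of order $e^{-\lambda}$; one has to use the structure of $W_\lambda$ to see that its components that are not already $\cO(e^{-\lambda})$ are attached to the rare left and transverse jumps, which supply the missing decay.
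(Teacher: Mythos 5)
Your proof is correct, and its skeleton is the same as the paper's: reduce to showing $f'(\lambda)>0$, isolate the contribution of a single attempted jump ($\cU_a(\tau_1)=1$), which is bounded below by a constant times $p\,e^{-\lambda}$ (your explicit expression $\tfrac{\mu p}{\mu+1}\bigl(4+(2d-2)(e^\lambda+e^{-\lambda})\bigr)Z_\lambda^{-2}$ is exactly the paper's $f_{1,0,1,0,1}+f_{0,1,0,1,1}$ term), and then beat the rest by $\cO(e^{-\lambda})$ with a prefactor that vanishes as $\mu\to\infty$, using Lemma~\ref{cor:exptailsofutauone} with $c_\mu\to\infty$. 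Where you genuinely diverge is the remainder: the paper stays with the series representation under $\P_0$, bounds the sum over the set $A_m$ crudely (using $e^{\lambda(k_a-\ell_a)}(2d/Z_\lambda)^m\le (2d)^me^{-\lambda}$, which requires $k_a-\ell_a\le m-1$), and must therefore exclude the ``all attempted jumps right/left'' tuples $(m,0,k,0)$ and $(0,m,0,\ell)$, disposing of them by a separate nonnegativity argument. You instead work directly under $\P_\lambda$, condition on $\cU_a(\tau_1)=m$ (legitimate, since $\cU_a(\tau_1)$ is determined by the infected-set dynamics and is independent of the direction choices, the same fact the paper uses in Lemma~\ref{lem:boundsongvellandr}), and use the multinomial moments of $(R_a,L_a,O_a)$ together with the observation that the coefficient $1-Z'_\lambda/Z_\lambda$ attached to $R_a$ is itself $\cO(e^{-\lambda})$; this yields an absolute-value bound $C_d\,e^{-\lambda}\,\estart{\cU_a(\tau_1)^2\1(\cU_a(\tau_1)\ge2)}{0}$ that covers all $m\ge 2$ terms uniformly in $\lambda$, including the ones the paper has to treat by sign. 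The two routes buy essentially the same conclusion; yours avoids the case split and the continuity argument for the main term (you bound the explicit function $e^{\lambda}\bigl(4+(2d-2)(e^\lambda+e^{-\lambda})\bigr)Z_\lambda^{-2}$ below directly), at the cost of a slightly longer moment computation, and your closing remark about why Cauchy--Schwarz only gives $\cO(e^{-\lambda/2})$ correctly identifies the point where the structure of $W_\lambda$ is needed.
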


\begin{proof}[\bf Proof] 

It suffices to prove that $f'(\lambda)$ is strictly positive for all $\lambda>0$ provided $\mu$ is sufficiently large.
For all $k_a,\ell_a,k,\ell \in \N_0$ and $m\geq 2$ using Lemma~\ref{cor:exptailsofutauone} we have 
\begin{equation}\label{eq:decayofrandl}
p_0(k_a,\ell_a,k,\ell,m) \leq \exp(-c_{\mu}m\big)
\end{equation}
for some constant $c_{\mu}$ with $c_{\mu}\rightarrow \infty$ as $\mu \rightarrow \infty$. 
By the construction of the regeneration time~$\tau_1$ we get
\begin{equation}\label{eq:positiveprob}
p_0(1,0,1,0,1) = p_0(0,1,0,1,1) \geq  p\cdot \frac{1}{2d}\cdot \frac{\mu}{\mu+1}\geq \frac{p}{4d} \, ,
\end{equation} for all $\mu \geq 1$. 
For every $m$ we let 
\begin{align*}
A_m:=&\big\{(k_a,\ell_a,k,\ell)\in \N_0^4: k\leq k_a \text{ and } \ell \leq \ell_a \text{ and }  k_a+\ell_a \leq m\big\} \\
&\setminus \{ (m,0,k,0),(0,m,0,\ell)  : k,\ell\leq m \} .
\end{align*}
We now get 
\begin{align*}
\sum_{m\geq 2} \sum_{(k_a,\ell_a,k,\ell)\in A_m} |f_{k_a,\ell_a,k,\ell,m}(\lambda)|
 &\leq \sum_{m\geq 2} 2m^6 \cdot e^{\lambda (m-1)} \left(\frac{2d}{Z_\lambda} \right)^{m} e^{-\c_\mu m} \\
 &\leq e^{-\lambda}\sum_{m\geq 2} 2m^6 (2d)^m e^{-\c_\mu m}. 
	\end{align*}
Since $c_\mu\to \infty$ as $\mu\to \infty$, by taking $\mu$ sufficiently large we can make the sum above as small as desired. Using~\eqref{eq:positiveprob} and that $d\geq 2$ we claim that there exists a positive constant $c$ such that for all $\lambda>0$
\begin{equation}\label{eq:FirstTerms}
f_{1,0,1,0,1}(\lambda) + f_{0,1,0,1,1}(\lambda)\geq c e^{-\lambda} . 
\end{equation}
To see this, note that for $\lambda \rightarrow 0$, the left-hand side in \eqref{eq:FirstTerms} converges to $2p_0(1,0,1,0,1)>0$. For $\lambda \rightarrow \infty$, the estimate in \eqref{eq:FirstTerms} follows from \eqref{eq:f5}. This allows to conclude \eqref{eq:FirstTerms} using continuity in $\lambda$.
We thus deduce that for $\mu$ sufficiently large we get for all $\lambda>0$
\begin{align}\label{eq:DoubleSumNew}
	\sum_{m\geq 2} \sum_{(k_a,\ell_a,k,\ell)\in A_m} |f_{k_a,\ell_a,k,\ell,m}(\lambda)| \leq \frac{1}{2} (f_{1,0,1,0,1}(\lambda) + f_{0,1,0,1,1}(\lambda)).
\end{align}
Taking now into account the summands not contained in $A_m$, note that
\begin{equation}
f_{m,0,n,0,m}(\lambda)\geq f_{0,m,0,n,m}(\lambda)
\end{equation} for all $\lambda>0$ and $m\in \N$  with $n \leq m$. Moreover, 
\begin{equation}
f_{0,m,0,n,m}(\lambda) = n m e^{-\lambda m } \left(\frac{2d}{Z_\lambda} \right)^m \left( 1+ \frac{Z_\lambda'}{Z_\lambda} \right)p_0(0,m,0,n,m)  \geq 0.
\end{equation} 
In view of \eqref{eq:DoubleSumNew} we obtain that 
\[
f'(\lambda)\geq \frac{1}{2} (f_{1,0,1,0,1}(\lambda)+f_{0,1,0,1,1}(\lambda)).
\]
Using again~\eqref{eq:positiveprob} we get that 
\begin{align*}
	f_{1,0,1,0,1}(\lambda)+f_{0,1,0,1,1}(\lambda)\geq \frac{2d}{Z_\lambda}\cdot \frac{(2d-2)(e^\lambda+e^{-\lambda})+4}{Z_\lambda} \cdot \frac{p}{4d}>0
\end{align*}
for all $\lambda>0$ and this concludes the proof.
\end{proof}

\begin{proposition}\label{pro:MonotonePLarge} Fix $\mu>0$. There exists some constant $\tilde{p}=\tilde{p}(\mu) \in (0,1)$ such that for all $p \in (\tilde{p},1)$, we have that $\lambda \mapsto v(\lambda)$ is strictly increasing in $\lambda>0$.
\end{proposition}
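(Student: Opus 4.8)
The plan is to show that $v'(\lambda)>0$ for every $\lambda>0$ as soon as $p$ is close enough to $1$, splitting the argument at a threshold $\lambda_1=\lambda_1(\mu,d)$: on $[\lambda_1,\infty)$ I use the sharp asymptotics of Lemma~\ref{lem:ApproximateDerivate}, while on $[0,\lambda_1]$ I use a continuity argument in $p$ anchored at the unobstructed case $p=1$.

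\emph{The tail $\lambda\geq\lambda_1$.} Lemma~\ref{lem:ApproximateDerivate} supplies $\lambda_0(\mu)$ and constants $c_\mu>0$, $C_{\mu,p}\in\R$ with $|v'(\lambda)-C_{\mu,p}e^{-\lambda}|\leq c_\mu e^{-2\lambda}$ for $\lambda\geq\lambda_0(\mu)$, and the proof of Theorem~\ref{thm:mainTwoDim} identifies $C_{\mu,p}=\tfrac{(2d-2)p}{(1-p+\mu)^2}(\mu^2-p(1-p))$. Since $p(1-p)\to0$ as $p\to1$, there is $p_1=p_1(\mu)<1$ with $\mu^2-p(1-p)>\mu^2/2$ for all $p\in(p_1,1)$; as $d\geq2$, this gives $C_{\mu,p}\geq c_0:=\tfrac{(2d-2)p_1\mu^2}{2(1+\mu)^2}>0$ for such $p$. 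Putting $\lambda_1:=\max(\lambda_0(\mu),\log(c_\mu/c_0)+1)$, which depends only on $\mu$ and $d$, we get $v'(\lambda)\geq c_0e^{-\lambda}-c_\mu e^{-2\lambda}>0$ for all $\lambda\geq\lambda_1$ and all $p\in(p_1,1)$.

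\emph{The bounded range $\lambda\in[0,\lambda_1]$.} By Proposition~\ref{pro:RegenerationSpeed}, Lemma~\ref{lem:Differentiability} and~\eqref{eq:f5} we have $v'_{\mu,p}(\lambda)=\E[\tau_1]^{-1}\sum_{m}\sum_{k_a+\ell_a\leq m,\ k\leq k_a,\ \ell\leq\ell_a}f_{k_a,\ell_a,k,\ell,m}(\lambda)$, and a summand depends on $p$ only through $p_0(k_a,\ell_a,k,\ell,m)$. Conditionally on $\cU_a(\tau_1)=m$, which is independent of the walk's direction choices and of the environment, the $m$ attempted directions of the symmetric walk are i.i.d.\ uniform over the $2d$ neighbours, whence $p_0(k_a,\ell_a,k,\ell,m)\leq\binom{m}{k_a}\binom{m-k_a}{\ell_a}(2d)^{-(k_a+\ell_a)}(\tfrac{2d-2}{2d})^{m-k_a-\ell_a}\P(\cU_a(\tau_1)=m)$. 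Together with $|f_{k_a,\ell_a,k,\ell,m}(\lambda)|\leq 2m^2e^{\lambda(k_a-\ell_a)}(2d/Z_\lambda)^m p_0(k_a,\ell_a,k,\ell,m)$ and the multinomial identity $\sum_{k_a,\ell_a}\binom{m}{k_a}\binom{m-k_a}{\ell_a}e^{\lambda k_a}e^{-\lambda\ell_a}(2d-2)^{m-k_a-\ell_a}=Z_\lambda^m$, this bounds the sum of $|f_{k_a,\ell_a,k,\ell,m}(\lambda)|$ over all admissible $(k_a,\ell_a,k,\ell)$ for fixed $m$ by $8m^4\P(\cU_a(\tau_1)=m)$, which is summable in $m$ by Lemma~\ref{cor:exptailsofutauone} and independent of $p\in(0,1]$ and $\lambda\geq0$. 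Since $p_0(k_a,\ell_a,k,\ell,m)$ is moreover continuous in $p$ up to $p=1$ (by a routine coupling of the percolation environments), it follows that $v'_{\mu,p}\to v'_{\mu,1}$ uniformly on $[0,\lambda_1]$ as $p\to1$, where $v'_{\mu,1}$ is the derivative of $v_{\mu,1}(\lambda)=(e^\lambda-e^{-\lambda})/Z_\lambda$, the speed of the $p=1$ walk, which never gets stuck and which a routine coupling identifies as $\lim_{p\to1}v_{\mu,p}(\lambda)$. Now $v'_{\mu,1}(\lambda)=(4+(2d-2)(e^\lambda+e^{-\lambda}))Z_\lambda^{-2}$ is continuous and strictly positive, hence bounded below on the compact interval $[0,\lambda_1]$ by some $\delta_0>0$, so there is $p_2=p_2(\mu,d)<1$ with $v'_{\mu,p}(\lambda)\geq\delta_0/2>0$ for all $p\in(p_2,1)$ and all $\lambda\in[0,\lambda_1]$.

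Taking $\tilde p:=\max(p_1,p_2)\in(0,1)$ and combining the two ranges yields $v'(\lambda)>0$ for all $\lambda>0$ whenever $p\in(\tilde p,1)$, which is the assertion. The main obstacle is the bounded range: unlike in the proof of Proposition~\ref{pro:MonotoneMuLarge}, the crude term-by-term bounds fail on $[0,\lambda_1]$, because the full summands, those with $k=k_a$ and $\ell=\ell_a$, persist as $p\to1$ and can individually be negative; positivity must instead be extracted from the cancellations packaged in $v'_{\mu,1}>0$, which forces one to prove continuity of $v'_{\mu,p}$ in $p$ up to $p=1$, uniformly on compacta, via the uniform domination of the series above.
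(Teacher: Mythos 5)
Your proof is correct, but the route through the bounded range of $\lambda$ is genuinely different from the paper's. The paper also splits at a threshold depending only on $\mu,d$ and, on the tail, simply invokes Theorem~\ref{thm:mainTwoDim}(1) (whose $\lambda_0$ is uniform in $p$), which is essentially your first step in condensed form; but on $[0,\lambda_0]$ it does \emph{not} pass through the series representation. Instead it couples the $p$-walk and the $p=1$ walk directly (same attempt clocks, same direction choices, same removal process for the infected sets), lets $\kappa$ be the index of the first attempted jump of the $p$-walk along a closed edge, notes that $\kappa$ stochastically dominates a geometric$(1-p)$ variable, and bounds the differences of the two expectations in the derivative formula of Lemma~\ref{lem:Differentiability} by $2\,\estart{(\cU_a(\tau_1))^2\1(\kappa<\cU_a(\tau_1))}{\lambda,p}\leq C_\mu'(1-p)^{1/4}$ via Cauchy--Schwarz and Lemma~\ref{cor:exptailsofutauone}; this gives $|v'(\lambda,p)-v'(\lambda,1)|\leq \tfrac{c_d}{2}e^{-\lambda}$ uniformly on the compact range, with an explicit polynomial rate in $1-p$. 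Your argument instead exploits that in \eqref{eq:f5} the parameter $p$ enters only through $p_0$, dominates the series uniformly in $p$ and $\lambda$ by the multinomial identity $\sum\binom{m}{k_a}\binom{m-k_a}{\ell_a}e^{\lambda k_a}e^{-\lambda\ell_a}(2d-2)^{m-k_a-\ell_a}=Z_\lambda^m$ (your bound $8m^4\P(\cU_a(\tau_1)=m)$ is valid, and in fact $2m^2\P(\cU_a(\tau_1)=m)$ suffices), and then uses continuity of $p_0$ in $p$ up to $p=1$; this buys a self-contained dominated-convergence proof that reuses the already-established series machinery, at the price of losing the explicit rate the coupling provides. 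Two steps you leave at sketch level should each get a sentence in a full write-up: the continuity of $p\mapsto p_0(k_a,\ell_a,k,\ell,m)$ at $p=1$ (clean because $\tau_1$, $\cU_a(\tau_1)$, the attempt times, directions and refresh times are independent of the Bernoulli state variables, so conditionally the probability is a polynomial in $p$ in finitely many draws), and the identification of the limiting series with $v'_{\mu,1}$, which is easiest by observing that Lemmas~\ref{lem:uptotau} and~\ref{lem:Differentiability} hold verbatim at $p=1$ since nothing in their proofs uses $p<1$ (alternatively, your route via pointwise convergence of $v_{\mu,p}$ plus uniform convergence of the derivatives also works). Finally, your tail step needs the error constant in Lemma~\ref{lem:ApproximateDerivate} to be uniform in $p$, which is what the statement asserts ($c_\mu$ depends only on $\mu$ and $d$); citing Theorem~\ref{thm:mainTwoDim}(1) directly, as the paper does, avoids even this point.
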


\begin{proof}[\bf Proof] 
Let $p$ be sufficiently close to $1$ so that $\mu^2>p(1-p)$ and let $\lambda_0=\lambda_0(\mu,d)$ be as in Theorem~\ref{thm:mainTwoDim}. For all $\lambda \geq \lambda_0$ the speed is strictly increasing by Theorem~\ref{thm:mainTwoDim}. Thus it remains to show that the speed is strictly increasing for all $\lambda\in (0,\lambda_0]$ for all $p$ close to $1$. To do this, we will prove that $v'(\lambda)>0$ for all such $\lambda$. 

In this proof we want to emphasise the dependence of the speed on the percolation parameter $p$, so we write $v(\lambda,p)=v(\lambda)$.
Observe that when $p=1$, the speed $v'(\lambda,1)\geq  c_{d} \cdot e^{-\lambda}$ for all $\lambda>0$, where $c_{d}$ is a constant depending on $d$, noting that $v(\lambda,1)=(e^{\lambda}-e^{-\lambda})Z_{\lambda}^{-1}$. It thus suffices to prove that for $p$ sufficiently close to $1$, 
\begin{align}\label{eq:goalforspeed}
|v'(\lambda,p)-v'(\lambda,1)| \leq \frac{c_{d}}{2}\cdot e^{-\lambda} 
\end{align}
uniformly for all $\lambda\in (0, \lambda_0]$. Recall the expression for $v'(\lambda,p)$ from Lemma~\ref{lem:Differentiability}. We want to compare $\estart{X_{\tau_1}^1(R_a-L_a)}{\lambda,p}$ to $\estart{X_{\tau_1}^1(R_a-L_a)}{\lambda,1}$ and also $\estart{X_{\tau_1}^1\cdot \cU_a(\tau_1)}{\lambda,p}$ to $\estart{X_{\tau_1}^1\cdot \cU_a(\tau_1)}{\lambda,1}$. To do this, we couple the walks in the two environments by letting them attempt the same jumps at  the times $(T_i)$ of a Poisson process of rate $1$ and using the same Poisson process of rate $\mu$ to remove copies of edges from their  infected sets. We let~$\tau_1$ be their first regeneration time. We now let $\kappa$ be the index of the first jump time  when the walk in the $p$-dynamical percolation process attempts a jump along a closed edge. Then up until time  $T_\kappa$ the two walks are in the same location. Note that $\kappa$ stochastically dominates a geometric random variable of parameter~$1-p$, because for all $s<t$ and all edges $e$ we have 
\[
\prcond{e\text{ is open at time } t}{e\text{ is open at time } s}{p} \geq p.
\]
Writing $\cU_a(t)$ for the number of attempted jumps in the interval $[0,t]$, by a union bound we get 
\begin{align*}
	\pr{\kappa<\cU_a(\tau_1)} \leq \pr{\cU_a(\tau_1)>\frac{1}{\sqrt{1-p}}} + \pr{\kappa<\frac{1}{\sqrt{1-p}}} \leq C_\mu \cdot \sqrt{1-p},
\end{align*}
where $C_\mu$ is a constant depending on $\mu$. Using this and the bound $|X_{\tau_1}^1|\leq \cU_a(\tau_1)$  we get 
\begin{align*}
|\estart{X_{\tau_1}^1(R_a-L_a)}{\lambda,p}  - \estart{X_{\tau_1}^1(R_a-L_a)}{\lambda,1}| \leq 2 \estart{(\cU_a(\tau_1))^2\1(\kappa<\cU_a(\tau_1))}{\lambda,p}. 	
\end{align*}
By the Cauchy-Schwarz inequality and the exponential tails of $\cU_a(\tau_1)$ uniformly in $p$ by Lemma~\ref{cor:exptailsofutauone}, we deduce
\begin{align*}
\estart{(\cU_a(\tau_1))^2\1(\kappa<\cU_a(\tau_1))}{\lambda,p} \leq C_{\mu}' (1-p)^{1/4}.
\end{align*}
Similarly we can bound the remaining terms appearing in $v'(\lambda)$. Recalling that $\lambda_0$ depends only on $\mu$ and $d$, 
taking $p=p(\lambda_0,\mu)$ sufficiently close to $1$, we get~\eqref{eq:goalforspeed} and this concludes the proof.
\end{proof}

%
%

\begin{acks}[Acknowledgments]
 We thank Frank den Hollander and Remco van der Hofstad for valuable discussions.   We also thank the anonymous referees for valuable comments and suggestions.  Moreover,  we are  grateful to Jan Nagel for pointing out a mistake in an earlier version. 
\end{acks}
\begin{funding}
 DS acknowledges the DAAD PRIME program for financial support.
\end{funding}



\bibliographystyle{imsart-number} 
\bibliography{BRWonDP}       


\end{document}